\newtheorem{theorem}{Theorem}[section]
\newtheorem{lemma}[theorem]{Lemma}
\newtheorem{corollary}[theorem]{Corollary}
\newtheorem{proposition}[theorem]{Proposition}
\theoremstyle{definition}
\newtheorem{definition}[theorem]{Definition}
\newtheorem{example}[theorem]{Example}
\newtheorem{remark}[theorem]{Remark}
\newcommand{\R}{{\mathbb R}}
\newcommand{\C}{{\mathbb C}}
\newcommand{\Z}{{\mathbb Z}}
\newcommand{\EE}{{\mathbb E}}
\newcommand{\PP}{{\mathbb P}}
\newcommand{\vol}{\operatorname{vol}}
\newcommand{\rank}{\operatorname{rank}}
\newcommand{\per}{\operatorname{per}}
\newcommand{\SSS}{{\mathbb S}}
\newcommand{\sgn}{\operatorname{sgn}}
\newcommand{\trace}{\operatorname{trace}}
\newcommand{\Hom}{\operatorname{Hom}}
\title{On the nuclear norm and the singular value decomposition of tensors}
\author{Harm Derksen}
\thanks{The author was partially supported by NSF grants DMS 0901298 and DMS 1302032.}
\begin{document}
\begin{abstract}
Finding the rank of a tensor is a problem that has many applications. Unfortunately it is often very difficult to determine the rank of a given tensor.
Inspired by the heuristics of convex relaxation, we consider the nuclear norm instead of the rank of a tensor. We  determine the nuclear
norm of various tensors of interest. Along the way, we also do a systematic study various measures of orthogonality in tensor product spaces and
we give a new generalization of the Singular Value Decomposition to higher order tensors.

\end{abstract}
\maketitle
\section{Introduction}\label{sec1}
\subsection{Tensor decompositions}
Suppose that $V=V^{(1)}\otimes \cdots\otimes V^{(d)}$ is the tensor product of finite dimensional Hilbert spaces.
For some applications, we would like to find a decomposition of  a given tensor $T$ as a sum of pure tensors:
\begin{equation}\label{eq:T}
T=\sum_{i=1}^r v_i, \mbox{ where }v_i=v_i^{(1)}\otimes v_i^{(2)}\otimes  \cdots \otimes v_i^{(d)}\mbox{ and } v_i^{(e)}\in V^{(e)}.
\end{equation}
 
 The smallest possible $r$ for which a decomposition (\ref{eq:T}) exists is called the {\em rank} of $T$ (see~\cite{Hitchcock}). For $d=2$, the rank of a tensor corresponds to the rank of a matrix.
So the tensor rank can be thought of as a generalization of the matrix rank to higher dimensional arrays.
For $d\geq 3$ it is difficult to determine
 the rank of a given tensor, or even to give good upper and lower bounds.  For example, a dimension counting argument shows that
 a dense open subset of $\C^n\otimes \C^n\otimes \C^n$ consists of tensors of rank $\geq n^3/(3n-2)=O(n^2)$. So far, there are no known explicit families of examples
 of tensors with a proven  lower bound of $\omega(n)$.
 The problem of finding the rank of a given tensor is known to be NP-hard (see~\cite{Hastad,Hastad2,HL}).
 The tensor rank plays an important role in Algebraic Complexity Theory. The complexity of matrix multiplication, for example, is closely related to the rank of a certain tensor (see Section~\ref{sec:matrix}) 
 
 In some applications, we just would like to find a low rank approximation: for a small fixed value of $r$, we want  to find pure tensors $v_1,\dots,v_r$
 such that the $\ell^2$-norm
$$
 \Big\|T-\sum_{i=1}^r v_i\Big\|
 $$
 is small. As pointed out in \cite{DSL}, there may not always be an optimal solution for which this norm is minimal.
  The problem of finding a low-rank approximation is known as the PARAFAC  (\cite{Harshman}) or CANDECOMP (\cite{CC})  model. There are many
 applications of this model, for example fluorescence spectroscopy,  statistics, psychometrics, geophysics and magnetic resonance imaging.

  \subsection{The nuclear and spectral norms}
  Convex relaxation is a powerful technique that is based on the following idea: Suppose that we are trying to find the sparsest solution $x=(x_1,\dots,x_n)\in \C^n$ to some problem. In other words, we are trying to find a solution $x$ such that 
  $$
  \|x\|_0:=|\{i\mid x_i\neq 0\}|
  $$
  is minimal.
  This is typically a very hard problem because the function $\|\cdot\|_0$ is not convex or continuous. But sometimes one can prove that minimizing the $\ell^1$-norm
  $$
  \|x\|_1=\sum_{i=1}^n |x_i|
  $$
   yields the sparsest solution. A sparse relaxation of the rank of a matrix $A$ is the nuclear norm $\|A\|_\star=\trace(\sqrt{AA^\star})$, which
   is also the sum of the singular values of $A$. In this context, this relaxation technique has been successfully applied to matrix completion problems in~\cite{RFP, CR, CT,KMO}.
 
 The nuclear norm can be  generalized to higher order tensors (see \cite[Definition~3.2]{LC}).
   The {\em nuclear norm} $\|T\|_\star$ of a tensor $T$ is the smallest possible value of $\sum_{i=1}^r \|v_i\|$ over all possible  decompositions (\ref{eq:T}). 
  The nuclear norm for tensors has been used for tensor completion problems in \cite{GRY}.
   
   The spectral norm  $[T]$ of $T$ is defined as the maximum value of $|\langle T,u\rangle|$ where $u$ ranges over all pure tensors of unit length.   For a matrix, the spectral norm is just the largest singular value.
   More generally, if ${\bf T}=(T_1,\dots,T_r)$ is an
   $r$-tuple of tensors, then we define $[{\bf T}]_{\alpha}$ as the maximum of
   $$
\Big( \sum_{i=1}^r| \langle T_i,u\rangle|^\alpha\Big)^{1/\alpha}
$$
over all pure tensors $u$ of unit length. The following theorem is useful for obtaining lower bounds for the spectral norm:
\begin{theorem}\label{theo:lowbound}
If $T$ is a tensor, ${\bf S}=(S_1,\dots,S_r)$ is an $r$-tuple of tensors  and $\alpha\geq 1$  then we have
$$
\Big( \sum_{i=1}^r |\langle T,S_i\rangle|^\alpha\Big)^{1/\alpha}\leq \|T\|_\star[{\bf S}]_\alpha
$$
\end{theorem}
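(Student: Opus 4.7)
The plan is to bound $\bigl(\sum_i |\langle T,S_i\rangle|^\alpha\bigr)^{1/\alpha}$ by testing against an arbitrary pure tensor decomposition of $T$ and then taking an infimum. Concretely, fix $\varepsilon>0$ and choose pure tensors $w_1,\dots,w_N$ with $T=\sum_j w_j$ and $\sum_j\|w_j\|\le \|T\|_\star+\varepsilon$. Writing $w_j=\|w_j\|\,u_j$ with $u_j$ a pure unit tensor, linearity and the ordinary triangle inequality give
\[
|\langle T,S_i\rangle| \;=\; \Bigl|\sum_j \|w_j\|\,\langle u_j,S_i\rangle\Bigr| \;\le\; \sum_j \|w_j\|\,|\langle u_j,S_i\rangle|.
\]

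Next I would apply Minkowski's inequality in $\ell^\alpha(\{1,\dots,r\})$, which is valid precisely because $\alpha\ge 1$ makes $\|\cdot\|_{\ell^\alpha}$ a genuine norm. Viewing the right-hand side above as a sum (over $j$) of nonnegative vectors indexed by $i$, Minkowski yields
\[
\Bigl(\sum_{i=1}^r |\langle T,S_i\rangle|^\alpha\Bigr)^{1/\alpha}
\;\le\;
\sum_{j=1}^N \|w_j\|\,\Bigl(\sum_{i=1}^r |\langle u_j,S_i\rangle|^\alpha\Bigr)^{1/\alpha}.
\]
Now each $u_j$ is a pure unit tensor, so by the definition of $[{\bf S}]_\alpha$ the inner factor is at most $[{\bf S}]_\alpha$. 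This gives
\[
\Bigl(\sum_{i=1}^r |\langle T,S_i\rangle|^\alpha\Bigr)^{1/\alpha}
\;\le\; [{\bf S}]_\alpha \sum_{j=1}^N \|w_j\|
\;\le\; [{\bf S}]_\alpha\bigl(\|T\|_\star+\varepsilon\bigr),
\]
and letting $\varepsilon\downarrow 0$ finishes the proof.

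The only nonroutine step is the use of Minkowski; the rest is bookkeeping around the definition of $\|T\|_\star$ as an infimum. I would also note for completeness that the argument works whether or not the nuclear-norm infimum is attained, which is why I phrased everything with an $\varepsilon$-optimal decomposition rather than an exact minimizer.
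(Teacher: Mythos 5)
Your proof is correct and follows essentially the same route as the paper: decompose $T$ into weighted pure unit tensors realizing (approximately) the nuclear norm, apply Minkowski's inequality in $\ell^\alpha$, and bound each term by $[{\bf S}]_\alpha$. The only cosmetic difference is that you work with an $\varepsilon$-optimal decomposition while the paper assumes the infimum is attained; both are fine.
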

The proof of Theorem~\ref{theo:lowbound} is in Section~\ref{sec5}.
If ${\bf S}=(S)$ just consists of a single tensor, then $[{\bf S}]_\alpha=[S]$ and we have:
   \begin{corollary}\label{cor:spectralbound}
   For tensors $S,T$ we have
   $$
   |\langle T,S\rangle|\leq \|T\|_\star [S].
   $$
   \end{corollary}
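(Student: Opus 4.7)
The plan is to deduce the corollary as the immediate $r=1$ specialization of Theorem~\ref{theo:lowbound}. Take ${\bf S}=(S)$, i.e.\ the one-tuple whose sole entry is $S$, and pick any $\alpha\geq 1$ (say $\alpha=1$). The left-hand side of Theorem~\ref{theo:lowbound} collapses to $(|\langle T,S\rangle|^\alpha)^{1/\alpha}=|\langle T,S\rangle|$, since there is only a single summand. Similarly, unpacking the definition of $[{\bf S}]_\alpha$ gives
$$
[{\bf S}]_\alpha=\max_u \bigl(|\langle S,u\rangle|^\alpha\bigr)^{1/\alpha}=\max_u |\langle S,u\rangle|,
$$
where $u$ ranges over pure tensors of unit length, and this maximum is by definition $[S]$. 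Substituting these two simplifications into the inequality of Theorem~\ref{theo:lowbound} yields exactly $|\langle T,S\rangle|\leq \|T\|_\star\,[S]$.

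There is no genuine obstacle here; the corollary is a direct transcription of Theorem~\ref{theo:lowbound} in its simplest case. One could alternatively argue the corollary from scratch, without appealing to Theorem~\ref{theo:lowbound}, by fixing $\varepsilon>0$ and choosing a pure-tensor decomposition $T=\sum_{i=1}^{m}v_i$ with $\sum_i\|v_i\|\leq \|T\|_\star+\varepsilon$; then linearity of the inner product together with the triangle inequality give
$$
|\langle T,S\rangle|\leq \sum_{i=1}^{m}|\langle v_i,S\rangle|\leq \sum_{i=1}^{m}\|v_i\|\,[S]\leq (\|T\|_\star+\varepsilon)\,[S],
$$
where the middle bound uses that each $v_i/\|v_i\|$ is a unit pure tensor, so $|\langle v_i/\|v_i\|,S\rangle|\leq[S]$ by definition of the spectral norm. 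Letting $\varepsilon\to 0$ recovers the stated inequality and makes transparent the duality between the nuclear and spectral norms that underlies both statements.
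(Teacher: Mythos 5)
Your first argument is exactly the paper's own derivation: the corollary is stated there as the $r=1$ specialization of Theorem~\ref{theo:lowbound}, with the observation that $[{\bf S}]_\alpha=[S]$ for a one-tuple. Your alternative direct argument is also correct and is precisely the ``easy direct proof'' the paper alludes to but does not write out, so both routes check out.
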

   Corollary~\ref{cor:spectralbound} can also easily be proven directly without using Theorem~\ref{theo:lowbound}. If we set $S=T$ then we obtain
   $$
   \|T\|^2\leq \|T\|_\star[T].
   $$

 \subsection{Singular Value Decomposition}
 The Singular Value Decomposition (SVD) can be generalized to higher-dimensional arrays. One such generalization was given in \cite{dLdMvdW}.
 Given a tensor $T$, one can choose an orthonormal bases $f_1^{(i)},\dots,f^{(i)}_{n_i}$ for $V_i$ for all $i$ and express $T$ in  these bases:
\begin{equation}\label{eq:TSVD}
T=\sum_{i_1=1}^{n_1}\sum_{i_2=1}^{n_2}\cdots\sum_{i_d=1}^{n_d}
\lambda_{i_1,i_2,\dots,i_d} f_{i_1}^{(1)}\otimes f_{i_2}^{(2)}\otimes \cdots\otimes f_{i_d}^{(d)}.
\end{equation}
Define
$$T^{(j)}_k=\sum_{i_j=k} \lambda_{i_1,i_2,\cdots,i_d}f_{i_1}^{(1)}\otimes f_{i_2}^{(2)}\otimes \cdots\otimes f_{i_d}^{(d)},
$$
where the sum runs over all $d$-tuples $(i_1,\dots,i_d)$ with $i_j=k$.
For a proper choice of the bases, the tensors $T_1^{(j)},\dots,T_{n_j}^{(j)}$ are orthogonal for all $j$
and
$$
\|T_1^{(j)}\|\geq \|T_2^{(j)}\|\geq \cdots\geq \|T_{n_j}^{(j)}\|.
$$
These numbers are called  the {\em singular values in mode $j$}. The decomposition~(\ref{eq:TSVD}) is called the {\em higher order single value decomposition} (HOSVD).

In this paper, we will give a different generalization of the SVD, which we call the {\em diagonal singular value decomposition} (DSVD). 
A given tensor may not have a diagonal singular value decomposition (see Section~\ref{sec7}),
but if it does, then the decomposition has many  nice properties.

\begin{definition}\label{def:tortho}
Suppose that $t\geq 1$ is a real number.
An $r$-tuple ${\bf S}=(S_1,\dots,S_r)$ of tensors of unit length  is called
 $t$-orthogonal if $[{\bf S}]_{2/t}=1$.
 \end{definition}
 If ${\bf v}=(v_1,\dots,v_r)$ is an $r$-tuple of pure  tensors of unit length,  then $t$-orthogonality implies orthogonality in the usual sense. 
Also, ${\bf v}$ is orthogonal if and only if it is $1$-orthogonal.
 \begin{definition}
If $\sigma_1\geq \sigma_2\geq \cdots\geq \sigma_r>0$ are real, and $(v_1,\dots,v_r)$ is a $2$-orthogonal $r$-tuple of pure tensors of unit length, then a decomposition
 
\begin{equation}\label{eq:TDSVD}
T=\sum_{i=1}^r\sigma_iv_i
\end{equation}
 is called a {\em diagonal singular value decomposition} (DSVD)
of $T$, and $\sigma_1,\dots,\sigma_r$ are called the {\em singular values} of $T$.
\end{definition}

For a tensor $T$ that has a diagonal singular value decomposition, we have the following results:
\begin{theorem}\label{theo:uniquesingval}
The singular values of $T$ are uniquely determined by $T$ (and do not depend on the choice of the  diagonal singular value decomposition).
\end{theorem}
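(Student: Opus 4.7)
My plan is to start with two diagonal singular value decompositions $T=\sum_{i=1}^r\sigma_iv_i=\sum_{j=1}^s\tau_jw_j$, both with weakly decreasing singular values, and to build a doubly stochastic matrix forcing $(\sigma_i)=(\tau_i)$. The workhorse is Theorem~\ref{theo:lowbound} at $\alpha=1$, combined with the observation noted after Definition~\ref{def:tortho} that $2$-orthogonality of an $r$-tuple of pure unit tensors upgrades to ordinary orthonormality.

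First I would show $\sum_i\sigma_i=\|T\|_\star=\sum_j\tau_j$: since the $v_i$ are orthonormal, $\langle T,v_i\rangle=\sigma_i$, so Theorem~\ref{theo:lowbound} with ${\bf S}=(v_1,\dots,v_r)$ gives $\sum_i\sigma_i\le\|T\|_\star$, while the decomposition itself gives the reverse inequality. Next I would set $c_{ij}=\langle v_i,w_j\rangle$ and expand
\[
\sigma_i=\langle T,v_i\rangle=\sum_j\tau_j\overline{c_{ij}},\qquad \tau_j=\langle T,w_j\rangle=\sum_i\sigma_ic_{ij}.
\]
The $2$-orthogonality of $(w_j)$ evaluated at $v_i$ gives $\sum_j|c_{ij}|\le 1$, and symmetrically $\sum_i|c_{ij}|\le 1$. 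The chain of triangle inequalities
\[
\sum_i\sigma_i=\sum_i\Bigl|\sum_j\tau_j\overline{c_{ij}}\Bigr|\le\sum_j\tau_j\sum_i|c_{ij}|\le\sum_j\tau_j,
\]
together with $\sum_i\sigma_i=\sum_j\tau_j$, is forced into equality. From the outer inequality I would read off $\sum_i|c_{ij}|=1$ for every $j$, and from the symmetric computation $\sum_j|c_{ij}|=1$ for every $i$; from the inner inequality, the nonzero terms $\tau_j\overline{c_{ij}}$ (for each fixed $i$) must lie on a common ray, which since $\sigma_i>0$ must be $\R_{\ge0}$, forcing $c_{ij}\ge0$. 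Hence $r=s$ and $C=(c_{ij})$ is an honest doubly stochastic $r\times r$ matrix with $\vec\sigma=C\vec\tau$ and $\vec\tau=C^{T}\vec\sigma$.

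I would close by the Hardy--Littlewood--P\'olya theorem: $\vec\sigma=C\vec\tau$ is majorized by $\vec\tau$, and symmetrically $\vec\tau$ is majorized by $\vec\sigma$; two vectors that majorize each other must be rearrangements, and since both of ours are sorted in decreasing order they coincide. The main obstacle I foresee is the equality analysis above: the $2$-orthogonality bounds only control the magnitudes $|c_{ij}|$, so to extract simultaneously the doubly stochastic structure, the nonnegativity of the $c_{ij}$, and the conclusion $r=s$, one must track the complex phases carefully through the triangle inequalities. Once that is in hand, the majorization wrap-up is immediate.
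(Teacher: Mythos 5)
Your argument is correct, and while it shares the same skeleton as the paper's proof (both hinge on the matrix of inner products between the two $2$-orthogonal families and a majorization argument in both directions), the execution is genuinely different. The paper's route is to prove a general inequality (Theorem~\ref{theo:main}) that directly yields the partial-sum bounds $\sum_{i\le k}\lambda_i\le\sum_{j\le k}\sigma_j$ for every $k$: it works only with the absolute values $y_{ij}=|\langle w_i,v_j\rangle|$, which form a doubly \emph{sub}stochastic matrix by $2$-orthogonality, and extracts the partial-sum bound from a small linear-programming/rearrangement argument; no equality analysis and no knowledge of $\|T\|_\star$ is needed, and the two sorted sequences coincide because all partial sums agree. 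You instead first establish $\sum_i\sigma_i=\|T\|_\star=\sum_j\tau_j$ via Theorem~\ref{theo:lowbound} at $\alpha=1$, then squeeze the triangle inequality to equality so as to upgrade the substochastic matrix of moduli to an honest doubly stochastic matrix $C$ with $\vec\sigma=C\vec\tau$ and $\vec\tau=C^{T}\vec\sigma$, and finish by Hardy--Littlewood--P\'olya and mutual majorization. Your version costs the extra phase-tracking in the equality case of the triangle inequality and an appeal to an external classical theorem, but it buys the structural byproduct that the transition matrix $C$ is doubly stochastic with nonnegative real entries (which also gives $r=s$ for free by summing all entries two ways); the paper's version is more self-contained and, being stated for a general (not necessarily orthogonal) second decomposition, is reused for Theorems~\ref{theo:nuclearlowerbound}, \ref{theo:distinctsingvalues} and~\ref{theo:torthoSVD}. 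All the steps you flag as delicate do go through: $2$-orthogonality of pure unit tensors does give orthonormality, so $\langle T,v_i\rangle=\sigma_i$; and since each $\sigma_i>0$ is real, the common ray in the equality case of the triangle inequality is indeed $\R_{\ge0}$, forcing $c_{ij}\ge 0$.
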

\begin{theorem}\label{theo:properties}
Suppose that $T$ has a singular value decomposition with singular values $\sigma_1\geq \sigma_2\geq \cdots\geq \sigma_r>0$.  Then we have
$$
\|T\|_\star=\sum_{i=1}^r \sigma_i,\quad [T]=\sigma_1,\mbox{and }\|T\|=\sqrt{\textstyle\sum_{i=1}^r\sigma_i^2}.
$$
\end{theorem}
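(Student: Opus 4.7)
The plan is to prove the three formulas in order of increasing subtlety. Throughout, write $T=\sum_{i=1}^r \sigma_i v_i$ for the DSVD and recall that ``$2$-orthogonal'' unpacks (via Definition~\ref{def:tortho}) to $[{\bf v}]_{1}=1$, which by the remark after that definition forces $\langle v_i,v_j\rangle=\delta_{ij}$.

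\textbf{Step 1: the Hilbert norm.} Since the $v_i$ are orthonormal, expand $\|T\|^2=\langle\sum_i \sigma_i v_i,\sum_j \sigma_j v_j\rangle = \sum_{i,j}\sigma_i\sigma_j\langle v_i,v_j\rangle=\sum_i \sigma_i^2$, giving $\|T\|=\sqrt{\sum_i \sigma_i^2}$ immediately.

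\textbf{Step 2: the spectral norm.} For the lower bound, take the pure unit tensor $u=v_1$; then $|\langle T,v_1\rangle|=\sigma_1$ by orthonormality, so $[T]\geq\sigma_1$. For the upper bound, let $u$ be an arbitrary pure unit tensor. By the triangle inequality and $\sigma_i\leq\sigma_1$,
\[
|\langle T,u\rangle|\leq \sum_{i=1}^r \sigma_i|\langle v_i,u\rangle|\leq \sigma_1\sum_{i=1}^r|\langle v_i,u\rangle|\leq \sigma_1\,[{\bf v}]_1=\sigma_1,
\]
where the last step uses exactly the $2$-orthogonality of ${\bf v}$. Taking the supremum over $u$ gives $[T]\leq\sigma_1$.

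\textbf{Step 3: the nuclear norm.} The upper bound $\|T\|_\star\leq\sum_i \sigma_i\|v_i\|=\sum_i\sigma_i$ is immediate from the definition of $\|\cdot\|_\star$ as an infimum over pure-tensor decompositions. The key step is the matching lower bound, and here is where the machinery of Section~\ref{sec5} pays off: apply Theorem~\ref{theo:lowbound} with ${\bf S}={\bf v}=(v_1,\dots,v_r)$ and $\alpha=1$. By $2$-orthogonality $[{\bf v}]_1=1$, while $\langle T,v_i\rangle=\sigma_i$ by orthonormality, so
\[
\sum_{i=1}^r\sigma_i=\sum_{i=1}^r|\langle T,v_i\rangle|\leq \|T\|_\star\,[{\bf v}]_1=\|T\|_\star.
\]
Combining the two inequalities yields $\|T\|_\star=\sum_i\sigma_i$.

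\textbf{Main obstacle.} The genuine content is the nuclear-norm lower bound, since the other two parts reduce to orthonormal bookkeeping and a one-line triangle inequality. Without the tuple version of the spectral norm, one would have to argue ad hoc that no alternative decomposition of $T$ into pure tensors can have smaller total length; Theorem~\ref{theo:lowbound} sidesteps this by turning the problem into evaluating $[{\bf v}]_1$, which is precisely the $2$-orthogonality hypothesis. Thus the whole proof hinges on recognizing that $2$-orthogonality is exactly the condition that makes the tuple inequality tight for $\alpha=1$.
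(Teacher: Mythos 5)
Your proof is correct. The Hilbert-norm and spectral-norm parts coincide with the paper's argument (expand $\|T\|^2$ using orthonormality; bound $|\langle T,u\rangle|$ by $\sigma_1[{\bf v}]_1$ and test against $u=v_1$), and your unpacking of $2$-orthogonality as $[{\bf v}]_1=1$, hence orthonormality of the $v_i$, is exactly right. Where you genuinely diverge is the nuclear-norm lower bound: you invoke Theorem~\ref{theo:lowbound} with ${\bf S}={\bf v}$ and $\alpha=1$, so that $\sum_i\sigma_i=\sum_i|\langle T,v_i\rangle|\leq\|T\|_\star[{\bf v}]_1=\|T\|_\star$. The paper instead routes through Theorem~\ref{theo:nuclearlowerbound}, which is a consequence of the rearrangement-style Theorem~\ref{theo:main} in Section~\ref{sec6}. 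The two lemmas deliver the same inequality here, but yours rests on lighter machinery (the proof of Theorem~\ref{theo:lowbound} is a one-line application of the $\ell^\alpha$ triangle inequality to an optimal decomposition of $T$), whereas the paper's route leans on the heavier apparatus that it needs anyway for the uniqueness statements (Theorems~\ref{theo:uniquesingval}, \ref{theo:distinctsingvalues}, \ref{theo:torthoSVD}); Theorem~\ref{theo:nuclearlowerbound} also applies under the weaker hypothesis of mere orthogonality of the $w_i$ and gives partial-sum bounds via the parameter $k$, generality that is not needed for this particular theorem. Your observation that $2$-orthogonality is precisely what makes the $\alpha=1$ duality bound tight is the right way to see why the DSVD hypothesis is the natural one. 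One cosmetic caveat common to both arguments: they implicitly assume the infimum defining $\|T\|_\star$ is attained by some decomposition, which is standard but unremarked in the paper as well.
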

\begin{theorem}\label{theo:distinctsingvalues}
If the singular values of $T$ are distinct, then the diagonal singular value decomposition is unique.
\end{theorem}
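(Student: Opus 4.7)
The plan is to isolate the leading pair $(\sigma_1, v_1)$ as spectral data of $T$, show it is uniquely determined by $T$, then subtract and induct on $r$.

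First, 2-orthogonality of $(v_1,\dots,v_r)$ combined with $\|v_1\|=1$ forces $\langle v_i,v_1\rangle=0$ for all $i\ge 2$: taking $u=v_1$ in the defining inequality $\sum_{i=1}^r|\langle v_i,u\rangle|\le 1$ gives $1=|\langle v_1,v_1\rangle|\le\sum_i|\langle v_i,v_1\rangle|\le 1$, so the remaining terms must vanish. Hence $\langle T,v_1\rangle=\sigma_1$, which with Theorem~\ref{theo:properties} shows $[T]=\sigma_1$ is attained at $u=v_1$.

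Next, I would use the distinct-singular-value hypothesis to pin down $v_1$ as the unique unit pure tensor with $\langle T,u\rangle=\sigma_1$. For any unit pure tensor $u$,
$$|\langle T,u\rangle|\le\sum_i\sigma_i|\langle v_i,u\rangle|\le\sigma_1\sum_i|\langle v_i,u\rangle|\le\sigma_1.$$
Equality throughout, combined with $\sigma_1>\sigma_i$ for $i\ge 2$, forces $\langle v_i,u\rangle=0$ for all $i\ge 2$, hence $|\langle v_1,u\rangle|=1$. By Cauchy-Schwarz this means $u=cv_1$ for a unimodular scalar $c$; the further condition $\langle T,u\rangle=\sigma_1$ (positive real) then forces $c=1$, so $u=v_1$.

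Given two DSVDs $T=\sum_i\sigma_iv_i=\sum_i\sigma_iw_i$ (with matching singular values by Theorem~\ref{theo:uniquesingval}), applying the first step to the $w$-decomposition yields $\langle T,w_1\rangle=\sigma_1$, so the second step gives $w_1=v_1$. The residual $T-\sigma_1v_1=\sum_{i\ge 2}\sigma_iv_i=\sum_{i\ge 2}\sigma_iw_i$ then has two DSVDs whose sub-tuples remain 2-orthogonal (the defining inequality is inherited by any sub-tuple, and the matching lower bound is automatic by taking $u=v_2$), with singular values $\sigma_2>\cdots>\sigma_r$ still distinct. Induction on $r$, with the trivial base $r=1$, completes the proof. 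The one delicate point I anticipate is the phase promotion at the end of the second step, where one must upgrade "$u=cv_1$ with $|c|=1$" to genuine equality $u=v_1$, using reality and positivity of $\sigma_1$; everything else is direct manipulation of the 2-orthogonality inequality.
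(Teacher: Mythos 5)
Your argument is correct, and it takes a genuinely different route from the paper's. The paper fixes the matrix $Y=(y_{i,j})$ with $y_{i,j}=|\langle w_i,v_j\rangle|$, uses $2$-orthogonality of both tuples to get row and column sums at most $1$, and then invokes the linear-programming/equality analysis from Theorem~\ref{theo:main}: for each $k$ the bound $\sum_{i\le k}\sigma_i\le\sum_j\sigma_jx_j$ together with distinctness forces $x_1=\cdots=x_k=1$ and $x_{k+1}=\cdots=x_r=0$, so $Y$ is supported on the diagonal with unit entries, and linear independence kills the phases. You instead run a deflation: you characterize $v_1$ as the \emph{unique} unit pure tensor attaining $\langle T,u\rangle=\sigma_1=[T]$ (the equality case of $|\langle T,u\rangle|\le\sum_i\sigma_i|\langle v_i,u\rangle|\le\sigma_1[{\bf v}]_1$, where $\sigma_1>\sigma_i$ kills the terms $i\ge2$ and Cauchy--Schwarz plus positivity of $\sigma_1$ fixes the phase), conclude $w_1=v_1$, subtract, check that the truncated tuples stay $2$-orthogonal, and induct on $r$. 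All the steps you flag as delicate do go through: $\langle T,w_1\rangle=\sigma_1$ is a positive real by the same orthogonality computation applied to the $w$-tuple, so the phase promotion $c=1$ is legitimate, and the sub-tuple $(v_2,\dots,v_r)$ inherits $[\cdot]_1\le1$ while $u=v_2$ supplies the reverse bound. What each approach buys: the paper's argument is non-inductive and reuses machinery (Theorem~\ref{theo:main}) that also powers Theorems~\ref{theo:uniquesingval} and~\ref{theo:nuclearlowerbound}, handling all leading partial sums simultaneously; yours is more self-contained and elementary, closely mirrors the classical deflation proof of uniqueness for the matrix SVD, and makes transparent exactly where distinctness of consecutive singular values is used at each stage.
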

\begin{theorem}\label{theo:torthoSVD}
If $T=\sum_{i=1}^r\sigma_i v_i$ is a diagonal singular value decomposition and $(v_1,\dots,v_r)$ is $t$-orthogonal for some $t>2$, then 
the diagonal singular value decomposition of $T$ is unique.
\end{theorem}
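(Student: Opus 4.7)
The plan is to compare any two diagonal singular value decompositions of $T$ through the Gram matrix between their pure tensors and to show that $t$-orthogonality with $t > 2$ forces this matrix to be a permutation. Let $T = \sum_i \sigma_i v_i$ be the given $t$-orthogonal DSVD and $T = \sum_j \sigma_j w_j$ any second DSVD; by Theorem~\ref{theo:uniquesingval} we may take the singular values of the two decompositions to coincide index-by-index. Since $t$-orthogonality implies ordinary orthonormality, $\sigma_i = \langle T, v_i\rangle$, and substituting the second decomposition on the right yields $\sigma_i = \sum_j \sigma_j \overline{a_{ij}}$, where $a_{ij} := \langle v_i, w_j\rangle$.

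The heart of the proof is the chain of inequalities
\begin{equation*}
\sum_{i} \sigma_i \;=\; \sum_i \Big|\sum_j \sigma_j \overline{a_{ij}}\Big| \;\leq\; \sum_j \sigma_j \sum_i |a_{ij}| \;\leq\; \sum_j \sigma_j \sum_i |a_{ij}|^{2/t} \;\leq\; \sum_j \sigma_j.
\end{equation*}
The first inequality is the triangle inequality (after reordering the double sum). The second uses $|a_{ij}| \leq 1$ by Cauchy--Schwarz on unit vectors, together with the elementary fact that $x \leq x^{2/t}$ on $[0,1]$ whenever $2/t < 1$---this is precisely where the hypothesis $t > 2$ enters. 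The third is the $t$-orthogonality of $(v_i)$ tested against the unit pure tensor $w_j$. Because both ends of the chain equal $\sum_i \sigma_i$, all three inequalities are equalities.

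Tracing the equality cases pins down $A = (a_{ij})$. Equality in the middle step forces $|a_{ij}|^{2/t} = |a_{ij}|$, so $|a_{ij}| \in \{0, 1\}$. The triangle-inequality equality, combined with $\sigma_i > 0$, makes each nonzero $\sigma_j \overline{a_{ij}}$ a positive real, so $a_{ij}$ itself is a non-negative real, hence $a_{ij} \in \{0, 1\}$. Equality in the third step forces $\sum_i a_{ij} = 1$ for every $j$, so column $j$ of $A$ contains exactly one entry equal to $1$; Cauchy--Schwarz equality then identifies $w_j = v_{\pi(j)}$ for the corresponding row index $\pi(j)$. Finally, the $2$-orthogonality of $(w_j)$ prevents distinct $w_j$'s from coinciding, so $\pi$ is a permutation and the two DSVDs match (up to the usual relabeling when singular values repeat).

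The main obstacle is arranging the inequality chain so that the $t$-orthogonality bound can be applied productively: the step exploiting $t > 2$ must send $|a_{ij}|$ \emph{upward} to $|a_{ij}|^{2/t}$, which is exactly the effect of $2/t < 1$. At $t = 2$ that step becomes an equality and the rigidity collapses, consistent with the familiar non-uniqueness of the matrix SVD when singular values repeat.
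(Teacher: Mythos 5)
Your proof is correct and follows essentially the same route as the paper's: both reduce to showing that the matrix of absolute inner products $|\langle v_i,w_j\rangle|$ has all entries in $\{0,1\}$ by playing the column sums $\sum_i |a_{ij}|$ off against the $t$-orthogonality bound $\sum_i |a_{ij}|^{2/t}\leq 1$, using that $x\leq x^{2/t}$ on $[0,1]$ when $t>2$, and then reading off a permutation matrix. The only difference is cosmetic: the paper imports the column-sum equality $\sum_i |a_{ij}|=1$ from the machinery of Theorem~\ref{theo:main}, whereas you derive it self-containedly from the equality chain starting at $\sigma_i=\langle T,v_i\rangle$.
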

The proofs of Theorems~\ref{theo:uniquesingval}--\ref{theo:torthoSVD} are in Section~\ref{sec6}.
\subsection{Tensors and multi-linear maps}
To a tensor
$$
T=\sum_{i=1}^r v_i^{(1)}\otimes \cdots \otimes v_i^{(d)}\in V^{(1)}\otimes \cdots \otimes V^{(d)}
$$
we can associate a multilinear map 
$$\varphi_T:(V^{(1)})^\star \times (V^{(2)})^\star \times \cdots \times (V^{(d-1)})^\star \to V^{(d)}.
$$
defined by
$$
\varphi_T(f^{(1)},f^{(2)},\dots,f^{(d-1)})=\sum_{i=1}^r  \big(\textstyle\prod_{j=1}^{d-1}f^{(j)}(v_i^{(j)})\big)v^{(d)}_i.
$$
We will apply this correspondence to matrix multiplication.
\subsection{Matrix multiplication}\label{sec:matrix}
Let $\C^{p\times q}$ denote the set of $p\times q$ matrices. The Hermitian form is given by $\langle A,B\rangle=\trace(AB^\star)$. The matrix with a $1$ in position $(i,j)$ and zeroes everywhere else is denoted by $e_{i,j}$.
Then matrix multiplication
$$
\C^{p\times q}\times \C^{q\times r}\to \C^{p\times r}
$$
corresponds to the tensor
$$
\sum_{i=1}^p\sum_{j=1}^q\sum_{k=1}^r  e_{i,j}\otimes e_{j,k}\otimes e_{i,k}\in 
\C^{p\times q}\otimes \C^{q\times r}\otimes \C^{p\times r}.
$$
If we identify $\C^{p\times r}$ with $\C^{r\times p}$ then the tensor has the following, more symmetric, form:
\begin{equation}\label{eq:Mpqr}
M_{p,q,r}=\sum_{i=1}^p\sum_{j=1}^q\sum_{k=1}^r \in e_{i,j}\otimes e_{j,k}\otimes e_{k,i}\in 
\C^{p\times q}\otimes \C^{q\times r}\otimes \C^{r\times p}.
\end{equation}
From this formula it is clear that $\rank(M_{p,q,r})\leq pqr$. Strassen proved that $\rank(M_{2,2,2})\leq 7$ (see~\cite{Strassen}) by
giving a decomposition
\begin{multline}\label{eq:Strassen}
M_{2,2,2}=(e_{1,1}+e_{2,2})\otimes (e_{1,1}+e_{2,2})\otimes (e_{1,1}+e_{2,2})+(e_{2,1}-e_{2,2})\otimes e_{1,1}\otimes (e_{1,2}+e_{2,2})+\\(e_{1,2}+e_{2,2})\otimes  (e_{2,1}-e_{2,2})\otimes e_{1,1}+(e_{1,1}+e_{2,1})\otimes (e_{1,2}-e_{1,1})\otimes e_{2,2}+(e_{1,2}-e_{1,1})\otimes e_{2,2}\otimes (e_{1,1}+e_{2,1})+\\+e_{2,2}\otimes (e_{1,1}+e_{2,1})\otimes (e_{1,2}-e_{1,1})+
e_{1,1}\otimes (e_{1,2}+e_{2,2})\otimes (e_{2,1}-e_{2,2}).
\end{multline}

and used this to show
that two $n\times n$ matrices can be multiplied by using only $O(n^{\log_2(7)})$ arithmetic where $\log_2(7)\approx 2.81<3$. 
The usual way of multiplying two matrices takes $O(n^3)$ arithmetic operations. More generally, define
$$
\omega=\inf \Big\{\frac{\log(\rank(M_{p,q,r}))}{\log(pqr)}\Big| p,q,r\geq 2\Big\}.
$$
If $\varepsilon>0$, then  two $n\times n$ matrices can be multiplied using only $o(n^{\omega+\varepsilon})$ arithmetic operations (see \cite{BCLR} and \cite{BCA}).
Coppersmith and Winograd proved that $\omega<2.376$ in \cite{CW}. Only recently, this bound was improved by Stothers (\cite{Stothers}) to $\omega<2.3737$ and the current record is $\omega<2.3727$ by Williams (\cite{Williams}).

For most values of $p,q,r$ the rank of $M_{p,q,r}$ is unknown. It is easy to see that  $\rank(M_{n,n,n})\geq n^2$. Bl\"aser gave a better, nontrivial lower bound in \cite{Blaser}.
A sharper lower bound was given by Landsberg in \cite{Landsberg2}, and using the same techniques, 
Massarenti and Raviolo (see~\cite{MR}) improved this lower bound to
$$
\rank(M_{n,n,n})\geq 3n^2-2\sqrt{2}n^{3/2}-3n.
$$
\begin{theorem}\label{theo:Mpqr}
The decomposition (\ref{eq:Mpqr}) is a diagonal singular value decomposition. In particular, the singular values of $M_{p,q,r}$ are
$$
\underbrace{1,1,\dots,1}_{pqr}.
$$
\end{theorem}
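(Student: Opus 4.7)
To verify that (\ref{eq:Mpqr}) is a diagonal singular value decomposition, I need to check three things: (i) the tensors $e_{i,j}\otimes e_{j,k}\otimes e_{k,i}$ are pure of unit length, (ii) the coefficients $\sigma_{i,j,k}=1$ are arranged in non-increasing order, and (iii) the $pqr$-tuple of pure tensors $v_{i,j,k}:=e_{i,j}\otimes e_{j,k}\otimes e_{k,i}$ is $2$-orthogonal. Items (i) and (ii) are immediate from the fact that $\|e_{i,j}\|=1$ in each matrix space, so the entire content of the theorem lies in (iii).

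By Definition~\ref{def:tortho}, $2$-orthogonality amounts to showing that $[{\bf v}]_1=1$, i.e.\ that for every pure unit tensor $u=a\otimes b\otimes c$ with $a\in\C^{p\times q}$, $b\in\C^{q\times r}$, $c\in\C^{r\times p}$ of Frobenius norm $1$, the inequality
$$
\sum_{i,j,k}\bigl|\langle e_{i,j}\otimes e_{j,k}\otimes e_{k,i},\, a\otimes b\otimes c\rangle\bigr|\;\leq\;1
$$
holds, with equality achievable. Using $\langle e_{i,j},a\rangle=\overline{a_{ij}}$ (and analogously for $b,c$), the left-hand side factors as $\sum_{i,j,k}|a_{ij}||b_{jk}||c_{ki}|$, so the whole task reduces to the classical matrix/triple inequality
$$
\sum_{i,j,k} x_{ij}y_{jk}z_{ki}\;\leq\;\|x\|\,\|y\|\,\|z\|
$$
for nonnegative arrays with $\|\cdot\|$ the Frobenius norm.

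I would prove this by two applications of Cauchy--Schwarz. First, for each fixed pair $(i,k)$,
$$
\sum_j x_{ij}y_{jk}\;\leq\;\Bigl(\sum_j x_{ij}^2\Bigr)^{1/2}\Bigl(\sum_j y_{jk}^2\Bigr)^{1/2}=:X_i Y_k.
$$
Plugging in and then applying Cauchy--Schwarz again in $(i,k)$ with the factor $z_{ki}$ against $X_iY_k$ yields
$$
\sum_{i,k} z_{ki}X_iY_k\;\leq\;\Bigl(\sum_{i,k}z_{ki}^2\Bigr)^{1/2}\Bigl(\sum_i X_i^2\cdot\sum_k Y_k^2\Bigr)^{1/2}=\|z\|\,\|x\|\,\|y\|.
$$
Equality is achieved at $u=e_{1,1}\otimes e_{1,1}\otimes e_{1,1}$, where exactly one summand contributes $1$. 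This forces $[{\bf v}]_1=1$, which is precisely the $2$-orthogonality of the decomposition. Hence (\ref{eq:Mpqr}) is a DSVD with singular values all equal to $1$.

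There is no real obstacle beyond recognizing the two-step Cauchy--Schwarz argument; the combinatorial cyclic pattern $(i,j),(j,k),(k,i)$ of the matrix-multiplication tensor is exactly what makes the pairing-off of indices line up so that each variable appears twice, enabling each Cauchy--Schwarz to consume one variable cleanly. It is worth noting that this argument is special to this cyclic index pattern; a less symmetric tensor would not admit the same bound, which is consistent with the fact that other natural decompositions (e.g.\ Strassen's (\ref{eq:Strassen})) need not be diagonal SVDs.
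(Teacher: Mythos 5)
Your proof is correct, but it takes a genuinely different route from the paper. The paper reduces Theorem~\ref{theo:Mpqr} to Proposition~\ref{prop:eijk}, which it proves structurally: the tuples $P=(e_i\otimes 1\otimes e_i)$, $Q=(e_j\otimes e_j\otimes 1)$, $R=(1\otimes e_k\otimes e_k)$ are each $2$-orthogonal because they are horizontal tensor products of two $1$-orthogonal tuples (Lemma~\ref{lem:tplusu}), and then the vertical tensor product $P\boxtimes Q\boxtimes R$ is again $2$-orthogonal because $[-]_\alpha$ is multiplicative under $\boxtimes$ (Proposition~\ref{prop:verticaltensor}). You instead verify $[{\bf v}]_1=1$ by hand: the quantity $\sum_{i,j,k}|a_{ij}||b_{jk}||c_{ki}|$ is bounded by $\|a\|\,\|b\|\,\|c\|$ via two applications of Cauchy--Schwarz, and equality at $e_{1,1}\otimes e_{1,1}\otimes e_{1,1}$ gives the reverse bound. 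Your two-step estimate is correct (the key identities $\sum_i X_i^2=\|x\|^2$ and $\sum_k Y_k^2=\|y\|^2$ make the second Cauchy--Schwarz close up), and it is exactly the content of Definition~\ref{def:tortho} with $t=2$. What your argument buys is a short, self-contained, elementary proof that makes visible why the cyclic index pattern is essential; what it loses is the modularity of the paper's approach, whose horizontal/vertical tensor-product calculus (in particular the nontrivial Proposition~\ref{prop:verticaltensor}) applies uniformly to other structured tensors such as $T_G$ in Theorem~\ref{theo:groupalgebra}, where a direct computation would be harder. One small point worth making explicit in a final write-up: the conclusion about the singular values being $1,\dots,1$ uses Theorem~\ref{theo:uniquesingval} to know that ``the singular values of $M_{p,q,r}$'' are well defined, i.e.\ independent of the particular DSVD exhibited.
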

The proof of the theorem is in Section~\ref{sec4}. The following corollary follows from  Theorem~\ref{theo:Mpqr} and Theorem~\ref{theo:properties}.
\begin{corollary}
We have $\|M_{p,q,r}\|_\star=pqr$ and $[M_{p,q,r}]=1$.
\end{corollary}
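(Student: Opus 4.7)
The plan is to verify directly that the decomposition (\ref{eq:Mpqr}) satisfies the conditions of a diagonal singular value decomposition: each summand $e_{i,j}\otimes e_{j,k}\otimes e_{k,i}$ is a pure tensor of unit length (since the matrix units are orthonormal in the Frobenius inner product, and the norm of a pure tensor factorizes), and the ``singular values'' $\sigma_{ijk}=1$ are trivially nonincreasing after any ordering. The only substantive point is to establish $2$-orthogonality of the $pqr$-tuple of pure tensors $v_{ijk}=e_{i,j}\otimes e_{j,k}\otimes e_{k,i}$, which by Definition~\ref{def:tortho} means $[{\bf v}]_1=1$.

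To compute $[{\bf v}]_1$, I would take an arbitrary pure unit tensor $A\otimes B\otimes C\in \C^{p\times q}\otimes \C^{q\times r}\otimes \C^{r\times p}$ and use
$$
\langle e_{i,j}\otimes e_{j,k}\otimes e_{k,i},\, A\otimes B\otimes C\rangle=\overline{A_{i,j}}\,\overline{B_{j,k}}\,\overline{C_{k,i}},
$$
which follows from $\langle e_{i,j},A\rangle=\overline{A_{i,j}}$ and the factorization of the inner product on pure tensors. Thus I must bound
$$
\sum_{i,j,k}|A_{i,j}B_{j,k}C_{k,i}|\ \le\ \|A\|\,\|B\|\,\|C\|=1.
$$

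The main (and only) step that needs a small argument is this trilinear inequality, which I would obtain by two applications of Cauchy--Schwarz. First, for each fixed $(i,k)$,
$$
\sum_{j}|A_{i,j}||B_{j,k}|\ \le\ \Bigl(\sum_{j}|A_{i,j}|^2\Bigr)^{1/2}\Bigl(\sum_{j}|B_{j,k}|^2\Bigr)^{1/2}=:\alpha_i\beta_k,
$$
and then
$$
\sum_{i,k}\alpha_i\beta_k|C_{k,i}|\ \le\ \Bigl(\sum_{i,k}\alpha_i^2\beta_k^2\Bigr)^{1/2}\Bigl(\sum_{i,k}|C_{k,i}|^2\Bigr)^{1/2}=\|A\|\,\|B\|\,\|C\|,
$$
using $\sum_i\alpha_i^2=\|A\|^2$ and $\sum_k\beta_k^2=\|B\|^2$. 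This proves $[{\bf v}]_1\le 1$.

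To see that equality holds, I would exhibit a pure unit tensor achieving the bound: taking $A=e_{1,1}$, $B=e_{1,1}$, $C=e_{1,1}$, only the term with $(i,j,k)=(1,1,1)$ contributes, giving a sum equal to $1$. Hence $[{\bf v}]_1=1$, so $(v_{ijk})$ is $2$-orthogonal, and (\ref{eq:Mpqr}) is a DSVD with all singular values equal to $1$. The only obstacle worth thinking about is the trilinear Cauchy--Schwarz bound, but it reduces to iterated Cauchy--Schwarz as above; no deeper input is required.
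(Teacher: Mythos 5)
Your proof is correct, but it reaches the key fact (the $2$-orthogonality of the $pqr$-tuple $(e_{i,j}\otimes e_{j,k}\otimes e_{k,i})$) by a genuinely different route than the paper. The paper never computes $[{\bf v}]_1$ directly: it builds the tuple as a vertical tensor product $P\boxtimes Q\boxtimes R$ of three $2$-orthogonal tuples (Proposition~\ref{prop:eijk}), where $2$-orthogonality of each factor comes from Lemma~\ref{lem:tplusu} (H\"older for horizontal tensor products) and the multiplicativity $[{\bf S}\boxtimes {\bf T}]_\alpha=[{\bf S}]_\alpha[{\bf T}]_\alpha$ of Proposition~\ref{prop:verticaltensor}, whose proof is the most technical part of Section~\ref{sec3}. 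You instead prove the concrete trilinear inequality
$$
\sum_{i,j,k}|A_{i,j}B_{j,k}C_{k,i}|\le \|A\|\,\|B\|\,\|C\|
$$
by two applications of Cauchy--Schwarz, which is shorter, self-contained, and avoids the general machinery entirely; it even gives $[M_{p,q,r}]\le 1$ directly as a byproduct. What the paper's approach buys is reusability: the same horizontal/vertical product lemmas are what make the group-algebra example (Theorem~\ref{theo:groupalgebra}) and other constructions go through, whereas your computation is specific to $M_{p,q,r}$. One small point to make explicit: establishing that (\ref{eq:Mpqr}) is a DSVD with all singular values $1$ is the content of Theorem~\ref{theo:Mpqr}; to get the stated conclusions $\|M_{p,q,r}\|_\star=pqr$ and $[M_{p,q,r}]=1$ you must still invoke Theorem~\ref{theo:properties} (whose nontrivial half, the lower bound $\|T\|_\star\ge\sum_i\sigma_i$, rests on Theorem~\ref{theo:nuclearlowerbound}); you use this implicitly but should cite it.
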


Note that the sum of the lengths of the  pure tensors in the decomposition (\ref{eq:Strassen}) is $2\sqrt{2}+12>8$. This shows that minimization of the rank, and minimization of the nuclear norm do not always coincide.
\subsection{The discrete Fourier transform and group algebras}
Suppose that $G$ is a finite group. The group algebra $\C G$ is the vector space with a orthonormal basis $g,g\in G$.
Multiplication in the group $G$ gives $\C G$ the structure of an associative algebra. 
The multiplication
$$
\C G\times \C G\to \C G
$$
corresponds to the tensor
$$
\sum_{g\in G}\sum_{h\in G} g\otimes h\otimes gh\in \C G\otimes \C G\otimes \C G.
$$
By permuting the basis vectors in the last factor $\C G$, the tensor can be written in the following symmetric form:
$$
T_G:=\sum_{\scriptstyle g,h,k\in G\atop  \scriptstyle ghk=1} g\otimes h\otimes k.
$$
\begin{theorem}\label{theo:groupalgebra}
If $G$ is a group of order $n$, $d_1,\dots,d_s$ are the dimensions of the irreducible representations of $G$, then the tensor $T_G$ has a diagonal singular value decomposition and its singular values are
$$
 \underbrace{\textstyle\sqrt{\frac{n}{d_1}},\dots,\sqrt{\frac{n}{d_1}} }_{d_1^3},\underbrace{\textstyle \sqrt{\frac{n}{d_2}},\dots,\sqrt{\frac{n}{d_2}}}_{d_2^3},\dots,
  \underbrace{\textstyle\sqrt{\frac{n}{d_s}},\dots,\sqrt{\frac{n}{d_{s}}}  }_{d_s^3}.
$$
\end{theorem}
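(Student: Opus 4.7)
\medskip

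\noindent\textbf{Proof plan.} The strategy is to reduce to Theorem~\ref{theo:Mpqr} via the Wedderburn/Fourier decomposition of $\C G$. Let $\rho_i\colon G\to \GL(V_i)$ be the irreducible unitary representations and set $\hat{a}_i=\sum_g a(g)\rho_i(g)$. The algebra map $\C G\to \bigoplus_{i=1}^s\End(V_i)$, $a\mapsto (\hat a_i)_i$, is a bijection, and Plancherel reads $\langle a,b\rangle_{\C G}=\tfrac{1}{n}\sum_i d_i\,\trace(\hat a_i\hat b_i^\star)$. Rescaling each $\End(V_i)$-component by $\sqrt{d_i/n}$ turns this into an isometric isomorphism. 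I would let $E_{a,b}^{(i)}\in\C G$ be the pullback of the matrix unit $e_{a,b}\in \C^{d_i\times d_i}$ under this isometry; then $\{E_{a,b}^{(i)}\}_{i,a,b}$ is an orthonormal basis of $\C G$, and a direct computation (using that the rescaling sends algebra multiplication to matrix multiplication scaled by $\sqrt{n/d_i}$) shows $E_{a,b}^{(i)}E_{b,c}^{(i)}=\sqrt{n/d_i}\,E_{a,c}^{(i)}$, with all other products vanishing.

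\medskip

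Next I would expand $T_G$ in this basis. Since $T_G$ encodes the trilinear form $(a,b,c)\mapsto \mathrm{coef}_1(abc)$, and since Fourier inversion gives $\mathrm{coef}_1(E_{a,a'}^{(i)})=\delta_{a,a'}\sqrt{d_i/n}$, the coefficient of $E_{a,b}^{(i)}\otimes E_{b',c}^{(i')}\otimes E_{c',a'}^{(i'')}$ in $T_G$ vanishes unless $i=i'=i''$, $b=b'$, $c=c'$, $a=a'$, and in that case equals $\sqrt{n/d_i}$. Hence
$$
T_G=\sum_{i=1}^s\sqrt{n/d_i}\sum_{a,b,c=1}^{d_i}E_{a,b}^{(i)}\otimes E_{b,c}^{(i)}\otimes E_{c,a}^{(i)},
$$
which, after ordering the blocks by $d_i$ increasing, has exactly the claimed list of coefficients. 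The pure tensors $v^{(i)}_{a,b,c}:=E_{a,b}^{(i)}\otimes E_{b,c}^{(i)}\otimes E_{c,a}^{(i)}$ have unit norm by orthonormality of the $E^{(i)}_{a,b}$. This matches, block by block, the decomposition of $M_{d_i,d_i,d_i}$ in Theorem~\ref{theo:Mpqr}, so it remains only to verify $2$-orthogonality of the \emph{combined} family across all blocks.

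\medskip

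For $2$-orthogonality, take a unit pure tensor $u=u_1\otimes u_2\otimes u_3\in(\C G)^{\otimes 3}$ and decompose $u_k=\sum_i u_k^{(i)}$ along the orthogonal isotypic splitting. Let $U_k^{(i)}$ denote the $d_i\times d_i$ matrix with entries $(U_k^{(i)})_{a,b}=\langle E_{a,b}^{(i)},u_k\rangle$, so that $\|U_k^{(i)}\|_F=\|u_k^{(i)}\|$. Then
$$
\sum_{i,a,b,c}|\langle v^{(i)}_{a,b,c},u\rangle|=\sum_i\sum_{a,b,c}\bigl|(U_1^{(i)})_{a,b}(U_2^{(i)})_{b,c}(U_3^{(i)})_{c,a}\bigr|.
$$
For each fixed $i$, the inner sum is exactly the quantity that Theorem~\ref{theo:Mpqr} bounds by $\|U_1^{(i)}\|_F\|U_2^{(i)}\|_F\|U_3^{(i)}\|_F=\|u_1^{(i)}\|\|u_2^{(i)}\|\|u_3^{(i)}\|$ (that bound is what makes the $M_{d_i,d_i,d_i}$-decomposition $2$-orthogonal). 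Two applications of Cauchy--Schwarz then give
$$
\sum_i\|u_1^{(i)}\|\|u_2^{(i)}\|\|u_3^{(i)}\|\leq \Bigl(\sum_i\|u_1^{(i)}\|^2\Bigr)^{1/2}\Bigl(\sum_i\|u_2^{(i)}\|^2\Bigr)^{1/2}\Bigl(\sum_i\|u_3^{(i)}\|^2\Bigr)^{1/2}=\|u_1\|\|u_2\|\|u_3\|=1,
$$
so $[\mathbf{v}]_1\leq 1$; the reverse inequality is automatic from $\|v^{(i)}_{a,b,c}\|=1$.

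\medskip

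\noindent\emph{Main obstacle.} The only delicate point is bookkeeping of normalizations: the Fourier transform is an algebra isomorphism but not an isometry, and the Plancherel factor $\sqrt{d_i/n}$ inverts to the factor $\sqrt{n/d_i}$ that appears as the singular value. Once the isometric rescaling is in place, the $2$-orthogonality argument splits cleanly into a per-block application of Theorem~\ref{theo:Mpqr} and a global Cauchy--Schwarz step; no new ideas beyond those already used for $M_{p,q,r}$ are needed.
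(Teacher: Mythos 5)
Your proof is correct and follows essentially the same route as the paper: decompose $\C G$ via Wedderburn/Plancherel into matrix blocks, identify each block's contribution with the matrix-multiplication tensor $M_{d_i,d_i,d_i}$, and invoke the $2$-orthogonality already established for that tensor. If anything, your version is more complete than the paper's: you obtain the coefficients $\sqrt{n/d_i}$ directly by Fourier inversion rather than by the norm count $\|T_i\|^2=nd_i^2$, and your explicit per-block bound followed by Cauchy--Schwarz across blocks verifies that the \emph{combined} family is $2$-orthogonal, a step the paper leaves implicit behind the remark that $(T_1,\dots,T_s)$ is $3$-orthogonal.
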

The proof of Theorem~\ref{theo:groupalgebra} can be found in Section~\ref{sec4}. From Theorem~\ref{theo:groupalgebra} and Theorem~\ref{theo:properties}
we get the following result.
\begin{corollary}
We have  $\|T_G\|_\star=\sqrt{n}\sum_{i=1}^s d_i^{5/2}$ and $[T_G]=\sqrt{n}$.
\end{corollary}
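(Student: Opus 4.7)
The plan is to simply combine Theorem~\ref{theo:groupalgebra} with Theorem~\ref{theo:properties}. Once we know that $T_G$ admits a diagonal singular value decomposition with the explicit multiset of singular values listed in Theorem~\ref{theo:groupalgebra}, Theorem~\ref{theo:properties} converts this into formulas for $\|T_G\|_\star$ (the sum of the singular values) and $[T_G]$ (the largest singular value).

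For the nuclear norm, I would just add up the multiset: each value $\sqrt{n/d_i}$ appears with multiplicity $d_i^3$, so
$$
\|T_G\|_\star=\sum_{i=1}^s d_i^3\sqrt{\frac{n}{d_i}}=\sqrt{n}\sum_{i=1}^s d_i^{5/2},
$$
which is the first claim. This step is purely bookkeeping.

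For the spectral norm, I need to identify the largest singular value, i.e., $\max_i\sqrt{n/d_i}=\sqrt{n/\min_i d_i}$. The key observation — which is really the only content beyond plugging in — is that any finite group $G$ carries the trivial one-dimensional representation, so $\min_i d_i=1$. Hence the largest singular value is $\sqrt{n/1}=\sqrt{n}$, which by Theorem~\ref{theo:properties} equals $[T_G]$. I expect no genuine obstacle here; the only thing to be careful about is remembering that the list of singular values in Theorem~\ref{theo:groupalgebra} is stated in an unordered form, so one must explicitly maximize over $i$ rather than take the first listed value.
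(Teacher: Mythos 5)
Your proposal is correct and is exactly the paper's route: the corollary is stated as an immediate consequence of Theorem~\ref{theo:groupalgebra} combined with Theorem~\ref{theo:properties}, with the nuclear norm obtained by summing the multiset of singular values and the spectral norm being the largest one, namely $\sqrt{n/1}=\sqrt{n}$ coming from the trivial representation. Your extra remark about needing $\min_i d_i=1$ is a point the paper leaves implicit, but there is no difference in substance.
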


Let $C_n$ be the (multiplicative) cyclic group of order $n$  generated by $x$. Then $\C C_n$ is the commutative ring $\C[x]/(x^n-1)$ and multiplication
in $\C G$ corresponds to the multiplication of polynomials in one variable (modulo $x^n-1$).
 We have
\begin{equation}\label{eq:TCn}
T_{C_n}=\sum_{i+j+k=0} x^i\otimes x^j\otimes x^k
\end{equation}
where the sum is over all $i,j,k\in \Z/n\Z$ with $i+j+k=0$. From (\ref{eq:TCn}) follows that $\rank(T_{C_n})\leq n^2$ and $\|T_{C_n}\|_\star \leq n^2$.
Let $\zeta=e^{2\pi i/n}$ be a primitive $n$-th root of unity.
The Discrete Fourier Transform
is based on the following decomposition of $T_{C_n}$:
\begin{equation}\label{eq:DFT}
T_{C_n}=\sum_{t=0}^{n-1} \sqrt{n}\textstyle (\frac{1}{\sqrt{n}}\sum_{i=0}^{n-1}\zeta^{t i}x^i)\otimes (\frac{1}{\sqrt{n}}\sum_{j=0}^{n-1}\zeta^{t j}x^j)\otimes (\frac{1}{\sqrt{n}}\sum_{k=0}^{n-1}\zeta^{t k}x^k).
\end{equation}
The following Theorem follows from Theorems~\ref{theo:groupalgebra} and~\ref{theo:torthoSVD}.
\begin{theorem}
The decomposition (\ref{eq:DFT}) is the unique diagonal singular value decomposition. In particular, the singular values of $T_{C_n}$ are
$$
\underbrace{\sqrt{n},\sqrt{n},\dots,\sqrt{n}}_n
$$
and $\|T_{C_n}\|_\star=n\sqrt{n}$.
\end{theorem}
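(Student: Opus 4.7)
The plan is to verify that (\ref{eq:DFT}) satisfies the hypotheses of Theorem~\ref{theo:torthoSVD} with orthogonality parameter $t=3>2$; this will simultaneously prove that (\ref{eq:DFT}) is a DSVD and establish its uniqueness.

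First, I would identify what the singular values must be. Since $C_n$ is abelian, its irreducible complex representations are all one-dimensional and there are exactly $n$ of them, so Theorem~\ref{theo:groupalgebra} produces $n$ singular values all equal to $\sqrt{n}$, and by Theorem~\ref{theo:uniquesingval} these are the singular values in every DSVD of $T_{C_n}$. Next, setting $u_t:=\frac{1}{\sqrt{n}}\sum_{i=0}^{n-1}\zeta^{ti}x^i$, the family $u_0,\dots,u_{n-1}$ is the standard DFT basis of $\C C_n$; a short geometric-sum computation confirms $\langle u_s,u_t\rangle=\delta_{s,t}$, so each $v_t:=u_t\otimes u_t\otimes u_t$ is a unit pure tensor and the coefficients $\sqrt{n}$ in (\ref{eq:DFT}) already agree with the required singular values.

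The crux is to prove that ${\bf v}=(v_0,\dots,v_{n-1})$ is $3$-orthogonal. Given any unit pure tensor $u=a\otimes b\otimes c$ with $\|a\|=\|b\|=\|c\|=1$, set $\alpha_t=\langle u_t,a\rangle$, $\beta_t=\langle u_t,b\rangle$, $\gamma_t=\langle u_t,c\rangle$. Expanding $a,b,c$ in the orthonormal basis $(u_t)$ gives $\sum_t|\alpha_t|^2=\sum_t|\beta_t|^2=\sum_t|\gamma_t|^2=1$, and $|\langle v_t,u\rangle|=|\alpha_t\beta_t\gamma_t|$. H\"older's inequality with exponents $(3,3,3)$ yields
\begin{equation*}
\sum_{t=0}^{n-1}|\langle v_t,u\rangle|^{2/3}=\sum_t|\alpha_t|^{2/3}|\beta_t|^{2/3}|\gamma_t|^{2/3}\leq\Bigl(\sum_t|\alpha_t|^2\Bigr)^{1/3}\Bigl(\sum_t|\beta_t|^2\Bigr)^{1/3}\Bigl(\sum_t|\gamma_t|^2\Bigr)^{1/3}=1,
\end{equation*}
with equality at $u=v_0$, hence $[{\bf v}]_{2/3}=1$. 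Since $p\mapsto(\sum_i y_i^p)^{1/p}$ is nonincreasing in $p$ for any nonnegative sequence, this also gives $[{\bf v}]_1\leq[{\bf v}]_{2/3}=1$; combined with $[{\bf v}]_1\geq|\langle v_0,v_0\rangle|=1$ we obtain $[{\bf v}]_1=1$, so ${\bf v}$ is $2$-orthogonal and (\ref{eq:DFT}) is a bona fide DSVD.

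With the DSVD in place and ${\bf v}$ shown to be $t$-orthogonal for $t=3>2$, Theorem~\ref{theo:torthoSVD} immediately delivers uniqueness, and the equality $\|T_{C_n}\|_\star=n\sqrt{n}$ drops out of Theorem~\ref{theo:properties}. I do not anticipate a genuine obstacle: the whole argument hinges on the single observation that H\"older's inequality, applied once with equal exponents to the three tensor factors, produces $3$-orthogonality of the DFT pure-tensor basis, which is strictly stronger than the $2$-orthogonality demanded by the DSVD definition.
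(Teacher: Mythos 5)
Your proof is correct and takes essentially the same route as the paper, whose entire argument is the one-line observation that the statement follows from Theorems~\ref{theo:groupalgebra} and~\ref{theo:torthoSVD}. Your direct H\"older computation establishing $3$-orthogonality of $(u_t\otimes u_t\otimes u_t)_{t}$ is precisely what Lemma~\ref{lem:tplusu} gives when applied three times to the $1$-orthogonal (orthonormal) DFT basis, and the remaining steps (singular values from Theorem~\ref{theo:groupalgebra}, uniqueness from Theorem~\ref{theo:torthoSVD}, the nuclear norm from Theorem~\ref{theo:properties}) are exactly the details the paper leaves implicit.
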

\subsection{The determinant and the permanent}
The determinant and permanent are multilinear functions 
$$\underbrace{\C^n\times \cdots \times \C^n}_n\to \C.
$$
Let $\SSS_n$ be the symmetric group on $n$ letters. The sign of a permutation $\sigma\in \SSS_n$ is denoted by $\sgn(\sigma)\in \{1,-1\}$.
The determinant and permanent correspond to the tensors
$$
{\textstyle \det_n}:=\sum_{\sigma\in \SSS_n} \sgn(\sigma) e_{\sigma(1)}\otimes e_{\sigma(2)}\otimes \cdots\otimes e_{\sigma(n)}
$$
and 
$$
{\textstyle \per_n}:=\sum_{\sigma\in \SSS_n}e_{\sigma(1)}\otimes e_{\sigma(2)}\otimes \cdots\otimes e_{\sigma(n)}
$$
respectively in 
$$
\underbrace{\C^{n}\otimes \cdots \otimes \C^n}_n\otimes \C\cong \underbrace{ \C^{n}\otimes \cdots \otimes \C^n}_n.
$$
From these formulas it is clear that $\rank(\det_n)\leq n!$ and $\rank(\per_n)\leq n!$.
The upper bound for the rank of the determinant is not sharp for $n\geq 3$ (see Section~\ref{sec8}).
The bound for the permanent is far from optimal. Another formula for
the permanent was given by Glynn~\cite{Glynn}:
\begin{equation}
\label{eq:Glynn}
\per_n=\frac{1}{2^{n-1}} \sum_{\delta}\Big(\prod_{k=1}^n \delta_k\Big)\textstyle (\sum_{j=1}^n \delta_je_j)\otimes (\sum_{j=1}^n \delta_je_j)\otimes \cdots \otimes (\sum_{j=1}^n \delta_je_j)
\end{equation}
where $\delta$ runs over all $2^{n-1}$ vectors $\delta=(\delta_1,\dots,\delta_n)\in \{1,-1\}^n$ with $\delta_1=1$. From this formula follows that $\rank(\per_n)\leq 2^{n-1}$
and $\|\per_n\|_\star\leq n^{n/2}$. Some easy lower bounds for the rank of the the permanent 
and determinant are given in Section~\ref{sec8}.
\begin{theorem}\label{theo:perm}
We have $\|\per_n\|_\star=n^{n/2}$.
\end{theorem}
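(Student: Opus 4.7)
The plan is to prove $\|\per_n\|_\star\leq n^{n/2}$ and $\|\per_n\|_\star\geq n^{n/2}$ separately. The upper bound comes from Glynn's formula~\eqref{eq:Glynn}: it displays $\per_n$ as a sum of $2^{n-1}$ pure tensors of the form $\pm\frac{1}{2^{n-1}}(\sum_{j=1}^n\delta_je_j)^{\otimes n}$, each of length $n^{n/2}/2^{n-1}$. Summing the lengths gives exactly $n^{n/2}$, so $\|\per_n\|_\star\leq n^{n/2}$.

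For the matching lower bound, note that $\per_n$ is a sum of $n!$ pairwise orthogonal unit pure tensors, so $\|\per_n\|^2=n!$. Setting $S=T=\per_n$ in Corollary~\ref{cor:spectralbound} yields $\|\per_n\|_\star\geq\|\per_n\|^2/[\per_n]=n!/[\per_n]$, so it suffices to show $[\per_n]\leq n!/n^{n/2}$. The tensor $\per_n$ is symmetric: it is invariant under the $\SSS_n$-action permuting the tensor factors. By Banach's theorem on symmetric multilinear forms, the supremum defining the spectral norm is attained on the symmetric diagonal, so $[\per_n]=\sup_{\|v\|=1}|\langle\per_n,v^{\otimes n}\rangle|$. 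A direct calculation gives $\langle\per_n,v^{\otimes n}\rangle=n!\prod_{j=1}^n\bar v_j$ since every permutation contributes the same product; the AM--GM inequality then shows $|v_1\cdots v_n|\leq n^{-n/2}$ with equality at $v=(1/\sqrt n,\dots,1/\sqrt n)$. Hence $[\per_n]=n!/n^{n/2}$ and $\|\per_n\|_\star\geq n^{n/2}$.

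The main obstacle is the invocation of Banach's theorem, which requires care in the complex setting. An alternative that avoids this uses Theorem~\ref{theo:lowbound} directly with $\alpha=1$ and the $n!$-tuple ${\bf S}=(e_{\sigma(1)}\otimes\cdots\otimes e_{\sigma(n)})_{\sigma\in\SSS_n}$: one computes $\sum_\sigma|\langle\per_n,S_\sigma\rangle|=n!$ and $[{\bf S}]_1=\max_U\per(|U|)$, where $|U|$ runs over real nonnegative matrices with $\ell^2$-unit rows. This reduces the key inequality to a real optimization whose maximizer is the uniform matrix $J/\sqrt n$, giving $[{\bf S}]_1=n!/n^{n/2}$ and $\|\per_n\|_\star\geq n^{n/2}$.
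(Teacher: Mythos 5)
Your proof has the same skeleton as the paper's: the upper bound $\|\per_n\|_\star\leq n^{n/2}$ from Glynn's formula (\ref{eq:Glynn}), and the lower bound from $n!=\|\per_n\|^2\leq \|\per_n\|_\star[\per_n]$ (Corollary~\ref{cor:spectralbound} with $S=T$) combined with the estimate $[\per_n]\leq n!/n^{n/2}$. The only divergence is how that spectral-norm estimate is justified. The paper simply cites the permanental Hadamard inequality of Carlen, Lieb and Loss (Theorem~\ref{theo:CEL}), which \emph{is} the statement $[\per_n]\leq n!/n^{n/2}$. You instead derive it from Banach's theorem that a symmetric multilinear form on a Hilbert space attains its norm on the diagonal, together with the computation $\langle \per_n,v^{\otimes n}\rangle=n!\,\overline{v_1\cdots v_n}$ and AM--GM. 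That route is legitimate: the polarization constant of a complex Hilbert space is indeed $1$, so the complex analogue of Banach's theorem holds and your argument yields an alternative proof of Theorem~\ref{theo:CEL}. But note that you are trading one nontrivial external theorem for another of comparable depth, so nothing is gained in self-containedness, and the complex case of Banach's theorem is exactly the point that needs a reference or a proof. Your proposed workaround via Theorem~\ref{theo:lowbound} does not avoid the difficulty: since $|\per(A)|\leq\per(|A|)$ entrywise, the Carlen--Lieb--Loss inequality reduces immediately to the nonnegative real case, so the assertion that the maximum of $\per(B)$ over nonnegative matrices with $\ell^2$-unit rows is attained at $J/\sqrt{n}$ and equals $n!/n^{n/2}$ is precisely the content of the theorem you are trying to bypass; stating it without proof is circular. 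In short: the argument is correct provided you either cite \cite{CEL} as the paper does, or supply a genuine reference or proof for the complex Banach theorem.
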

The formula (\ref{eq:Glynn}) minimizes the sum of the lengths of the pure tensors, but these pure tensors are not $2$-orthogonal (or even orthogonal) for $n\geq 3$. 
In fact, for $d\geq 3$ the tensor $\per_n$ does not have a diagonal singular value decomposition (see Section~\ref{sec7}).
\begin{theorem}\label{theo:det}
We have $\|\det_n\|_\star=n!$.
\end{theorem}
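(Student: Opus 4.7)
The plan is to sandwich $\|\det_n\|_\star$ between $n!$ and $n!$, getting the upper bound from the obvious decomposition and the matching lower bound from the inequality $\|T\|^2 \le \|T\|_\star [T]$ noted just after Corollary~\ref{cor:spectralbound}.

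For the upper bound, I simply read off the defining expression
$$\det{}_n = \sum_{\sigma \in \SSS_n} \sgn(\sigma)\,e_{\sigma(1)}\otimes\cdots\otimes e_{\sigma(n)}.$$
Each of these $n!$ summands is $\pm 1$ times a pure unit tensor, so $\|\det_n\|_\star \le n!$ by the definition of the nuclear norm.

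For the matching lower bound, I would compute $\|\det_n\|^2$ and $[\det_n]$ separately. First, the pure tensors $e_{\sigma(1)}\otimes\cdots\otimes e_{\sigma(n)}$ for $\sigma \in \SSS_n$ are pairwise orthonormal in $\C^n\otimes\cdots\otimes\C^n$, so $\|\det_n\|^2 = \sum_{\sigma}\sgn(\sigma)^2 = n!$. Second, for a pure unit tensor $u = u_1\otimes\cdots\otimes u_n$ with $\|u_j\|=1$, I expand
$$\langle \det{}_n, u_1\otimes\cdots\otimes u_n\rangle = \sum_{\sigma \in \SSS_n}\sgn(\sigma)\prod_{i=1}^n\overline{u_i(\sigma(i))} = \overline{\det(U)},$$
where $U$ is the matrix whose $j$-th column is $u_j$. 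Hadamard's inequality gives $|\det(U)| \le \prod_j \|u_j\| = 1$, with equality for the standard basis, so $[\det_n] = 1$. Plugging into $\|\det_n\|^2 \le \|\det_n\|_\star[\det_n]$ yields $n! \le \|\det_n\|_\star$, closing the sandwich.

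There is no real obstacle here: the lower bound reduces to the classical Hadamard determinant inequality, and the upper bound is immediate from the alternating-sum formula. The only point that deserves care is the complex conjugation bookkeeping when identifying $\langle \det_n, u\rangle$ with $\det(U)$, but this does not affect absolute values and hence not the argument. Note also that the result is consistent with Theorem~\ref{theo:properties} only up to the caveat that $\det_n$ need not admit a DSVD (cf.\ Section~\ref{sec7}); here we use only the general inequality $\|T\|^2 \le \|T\|_\star[T]$, which does not require a DSVD.
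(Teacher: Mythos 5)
Your proof is correct and essentially identical to the paper's: the paper also gets the upper bound from the defining alternating-sum decomposition, bounds $[\det_n]\leq 1$ via Hadamard's inequality, and closes the sandwich with $\|\det_n\|^2=n!\leq \|\det_n\|_\star[\det_n]$ (Corollary~\ref{cor:spectralbound} with $S=T$). No differences worth noting.
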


The proofs of Theorems~\ref{theo:det} and  \ref{theo:perm} are in Section~\ref{sec5}.

\section{Orthogonality of vectors}\label{sec2}
In this section we will study various measures of orthogonality of vectors in a Hilbert space $V$.
It is  convenient to deal with unit vectors. Suppose that ${\bf v}=(v_1,\dots,v_r)$ is an $r$-tuple of unit vectors.
\begin{definition}
The {\em coherence}  is defined by
$$
\mu({\bf v})=\max_{i\neq j}|\langle v_i,v_j\rangle|.
$$
\end{definition}

More generally, we will define
\begin{definition}
$$
\mu_{\alpha}({\bf v})=\max_i\left\{\Big(\sum_{j\neq i} |\langle v_i,v_j\rangle|^{\alpha}\Big)^{1/\alpha}\right\}.
$$
\end{definition}
Notice that $\lim_{\alpha\to\infty}\mu_\alpha({\bf v})=\mu({\bf v})$. So we may think of $\mu({\bf v})$ as $\mu_\infty({\bf v})$.

Suppose that ${\bf v}=(v_1,\dots,v_r)$ and ${\bf w}=(w_1,\dots,w_r)$ are $r$-tuples of unit vectors. We define the {\em horizontal tensor product} of ${\bf v}$ and ${\bf w}$ by
$$
{\bf v}\otimes {\bf w}:=(v_1\otimes w_1,\dots,v_r\otimes w_r)\in (V\otimes W)^r
$$
%Clearly, we have 
%$$
%\mu({\bf v}\otimes {\bf w})\leq \mu(v)\mu(w).
%$$
If ${\bf v}=(v_1,\dots,v_r)\in V^r$ and ${\bf w}=(w_1,\dots,w_s)\in W^s$ then we define
$$
{\bf v}\boxtimes {\bf w}=(v_1\otimes w_1,v_1\otimes w_2,\dots,v_1\otimes w_s,v_2\otimes w_1,\dots,v_r\otimes w_s)\in (V\otimes W)^{rs}.
$$
It is easy to see that
$$
\mu(v\boxtimes w)=\max\{\mu(v),\mu(w)\}.
$$
%We will need a more refined measures for the orthogonality of vectors.
%\begin{definition}
%For a unit vector table $v=(v_1,\dots,v_r)$ we define
%$$
%\{v\}_\alpha=\max_i\left\{\Big(\sum_{j=1}^r |\langle v_i,v_j\rangle|^{\alpha}\Big)^{1/\alpha}\right\}.
%$$
%and
%$$
%\{v\}_\alpha^{\circ}=\max_i\left\{\Big(\sum_{j\neq i}^r |\langle v_i,v_j\rangle|^{\alpha}\Big)^{1/\alpha}\right\}.
%$$
%\end{definition}
%Of course, the two measures are closely related by
%$$
%\{v\}_\alpha^{\alpha}=(\{v\}_\alpha^{\circ})^\alpha+1.
%$$
%Some relations are more easily expressed in terms of $\{v\}_\alpha$
%and other relations are simpler in terms of $\{v\}_\alpha^\circ$, so we will use both notions.
%The relation to the coherence is as follows:
%$$
%0\leq \{v\}_\alpha^{\circ}\leq (r-1)^{1/\alpha}\mu(v)\leq (r-1)^{1/\alpha}.
%$$
%Note that $\{v\}_\alpha^{\circ}=0$ if and only if $v$ is orthogonal.

\begin{lemma}
If ${\bf v}=(v_1,\dots,v_r)\in V^r$ and ${\bf w}=(w_1,\dots,w_s)\in W^s$ are  tuples of unit vectors, then we have
$$
\mu_{\alpha}({\bf v}\boxtimes {\bf w})^\alpha+1=(\mu_{\alpha}({\bf v})^\alpha+1)(\mu_\alpha({\bf w})^\alpha+1)
$$
for $\alpha>0$ and
$$
\mu({\bf v}\otimes {\bf w})=\max\{\mu({\bf v}),\mu({\bf w})\}.
$$
\end{lemma}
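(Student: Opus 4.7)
The entire lemma rests on the multiplicativity of the Hermitian form under tensor product, $\langle v_i\otimes w_j,v_{i'}\otimes w_{j'}\rangle=\langle v_i,v_{i'}\rangle\langle w_j,w_{j'}\rangle$, so that $|\langle v_i\otimes w_j,v_{i'}\otimes w_{j'}\rangle|^\alpha$ cleanly separates into a $v$-factor and a $w$-factor. For the $\boxtimes$ identity I would fix an index pair $(i,j)$ of the $rs$-tuple and rewrite the restricted sum $\sum_{(i',j')\neq(i,j)}|\langle v_i,v_{i'}\rangle|^\alpha|\langle w_j,w_{j'}\rangle|^\alpha$ as the full double sum over $(i',j')$ minus the $(i,j)$ diagonal contribution. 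The full double sum factors as a product of two one-variable sums, and the removed diagonal contribution is $\|v_i\|^{2\alpha}\|w_j\|^{2\alpha}=1$.

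Each of the two one-variable sums equals $1+S_v(i)$ or $1+S_w(j)$, where $S_v(i):=\sum_{i'\neq i}|\langle v_i,v_{i'}\rangle|^\alpha$ and similarly for $w$. Adding $1$ to the restricted sum therefore turns it into the clean product $(1+S_v(i))(1+S_w(j))$. Because the first factor depends only on $i$, the second only on $j$, and both are nonnegative, the maximum of this product over $(i,j)$ factors as the product of the individual maxima; by the definition of $\mu_\alpha$ these maxima are $1+\mu_\alpha({\bf v})^\alpha$ and $1+\mu_\alpha({\bf w})^\alpha$. This yields $\mu_\alpha({\bf v}\boxtimes{\bf w})^\alpha+1=(1+\mu_\alpha({\bf v})^\alpha)(1+\mu_\alpha({\bf w})^\alpha)$, which is the first identity.

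For the second identity, the same factorization gives $\mu({\bf v}\otimes{\bf w})=\max_{i\neq j}|\langle v_i,v_j\rangle|\,|\langle w_i,w_j\rangle|$. Each factor is bounded by $1$, so the product is bounded by the larger of the two, which delivers the $\leq$ direction immediately. The main obstacle here is the reverse inequality: in full generality the equality can fail (if ${\bf v}$ is orthogonal while ${\bf w}$ is not, then the left side is $0$ while the right side is $\mu({\bf w})>0$). I would therefore interpret the second identity either as an upper bound, or as the limiting case $\alpha\to\infty$ of the $\boxtimes$ identity, which reduces it to the easy observation recorded immediately before the lemma. With either interpretation the argument is complete; otherwise an auxiliary alignment hypothesis on the pair achieving the maximum in one factor is required.
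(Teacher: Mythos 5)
Your proof of the first identity is exactly the paper's argument: add the diagonal term $1$ so that the restricted sum over $(k,l)\neq(i,j)$ becomes the full double sum, factor that as $\bigl(1+S_v(i)\bigr)\bigl(1+S_w(j)\bigr)$ using $\langle v_i\otimes w_j,v_k\otimes w_l\rangle=\langle v_i,v_k\rangle\langle w_j,w_l\rangle$, and note that the maximum over $(i,j)$ of a product of a nonnegative function of $i$ with a nonnegative function of $j$ is the product of the maxima. Nothing to add there.

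Your reservation about the second identity is well founded, and in fact you have caught something the paper glosses over: its proof establishes only the first identity and never addresses the second. As printed, with the horizontal product ${\bf v}\otimes{\bf w}=(v_1\otimes w_1,\dots,v_r\otimes w_r)$, the equality $\mu({\bf v}\otimes{\bf w})=\max\{\mu({\bf v}),\mu({\bf w})\}$ is false: your example with ${\bf v}$ orthonormal and ${\bf w}$ not gives $0$ on the left and $\mu({\bf w})>0$ on the right, and only the bound $\mu({\bf v}\otimes{\bf w})\leq\mu({\bf v})\mu({\bf w})\leq\max\{\mu({\bf v}),\mu({\bf w})\}$ survives (this is the $\beta\to\infty$ case of the horizontal H\"older inequality proved just afterwards). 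The statement that is both true and consistent with the displayed claim immediately preceding the lemma is the vertical one, $\mu({\bf v}\boxtimes{\bf w})=\max\{\mu({\bf v}),\mu({\bf w})\}$, which is the $\alpha\to\infty$ limit of the first identity and can also be checked directly by splitting the index pairs according to whether $i=k$ or $j=l$. So your proposed repair --- prove the $\boxtimes$ version, or downgrade the $\otimes$ version to an inequality --- is the right reading of the lemma.
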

\begin{proof}
We
have
\begin{multline*}
\mu({\bf v}\boxtimes {\bf w})^\alpha_\alpha+1 =\max_{i,j}\left\{\sum_{k=1}^r\sum_{l=1}^s |\langle v_i\otimes w_j,v_k\otimes w_l\rangle|^{\alpha}\right\}=
\max_{i,j}\left\{\sum_{k=1}^r\sum_{l=1}^s |\langle v_i,v_k\rangle\langle w_j, w_l\rangle|^{\alpha}\right\}=\\
\max_{i,j}\left\{\Big(\sum_{k=1}^r |\langle v_i,v_k\rangle|^{\alpha}\Big)\Big(\sum_{l=1}^s |\langle w_j, w_l\rangle|^{\alpha}\Big)\right\}=\\=
\max_i\left\{\sum_{k=1}^r |\langle v_i,v_k\rangle|^{\alpha}\right\}\max_j\left\{\sum_{l=1}^s |\langle w_j, w_l\rangle|^{\alpha}\right\}=(\mu({\bf v})^\alpha_\alpha+1)(\mu({\bf w})^\alpha_\alpha+1).
\end{multline*}
\end{proof}
We will need a slightly more general version of the  H\"older inequality.
\begin{lemma}[H\"older inequality]
If $a_1,\dots,a_r,b_1,\dots,b_r$ are nonnegative real numbers, and $\alpha,\beta,\gamma$ are positive real numbers with $1/\alpha+1/\beta=1/\gamma$, then we have
$$
\Big(\sum_{i=1}^r (a_ib_i)^\gamma\Big)^{1/\gamma}\leq \Big(\sum_{i=1}^r a_i^\alpha\Big)^{1/\alpha}\Big(\sum_{i=1}^r b_i^\beta\Big)^{1/\beta}.
$$
\end{lemma}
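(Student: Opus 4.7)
The plan is to reduce the generalized inequality to the classical Hölder inequality (the case $\gamma=1$) by a simple change of exponents. The hypothesis $1/\alpha+1/\beta=1/\gamma$ can be rewritten as $\gamma/\alpha+\gamma/\beta=1$, so setting $p=\alpha/\gamma$ and $q=\beta/\gamma$ yields a conjugate pair $p,q>1$ with $1/p+1/q=1$ (this uses $\alpha,\beta>\gamma$, which is forced by $1/\gamma>1/\alpha$ and $1/\gamma>1/\beta$ and positivity).

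I would then apply the classical Hölder inequality to the sequences $(a_i^\gamma)_{i=1}^r$ and $(b_i^\gamma)_{i=1}^r$ with exponents $p$ and $q$, obtaining
$$
\sum_{i=1}^r (a_ib_i)^\gamma=\sum_{i=1}^r a_i^\gamma\,b_i^\gamma\leq\Big(\sum_{i=1}^r a_i^{\gamma p}\Big)^{1/p}\Big(\sum_{i=1}^r b_i^{\gamma q}\Big)^{1/q}=\Big(\sum_{i=1}^r a_i^{\alpha}\Big)^{\gamma/\alpha}\Big(\sum_{i=1}^r b_i^{\beta}\Big)^{\gamma/\beta}.
$$
Raising both sides to the power $1/\gamma$ gives the claimed inequality.

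There is essentially no obstacle here: the only substantive input is the standard Hölder inequality, which I would cite as known. The remaining bookkeeping is to verify that the exponents $p=\alpha/\gamma$ and $q=\beta/\gamma$ are genuinely conjugate (as noted above) and to observe that the identity $(a_ib_i)^\gamma=a_i^\gamma b_i^\gamma$ holds because the entries are nonnegative. Boundary cases where some $a_i$ or $b_i$ vanish cause no difficulty since all terms are nonnegative.
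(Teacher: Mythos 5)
Your proof is correct and is essentially identical to the paper's: both set $p=\alpha/\gamma$, $q=\beta/\gamma$, apply the classical H\"older inequality to the sequences $(a_i^\gamma)$ and $(b_i^\gamma)$, and take the $\gamma$-th root. Your added remark verifying that $p$ and $q$ are genuinely conjugate exponents is a small point the paper leaves implicit.
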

\begin{proof}
The usual H\"older inequality states that, if $1/p+1/q=1$, then we have
$$
\sum_{i=1}^r a_ib_i\leq \Big(\sum_{i=1}^r a_i^p\Big)^{1/p}\Big(\sum_{i=1}^r b_i^q\Big)^{1/q}
$$
with equality if and only if  the vectors $(a_1^p,\dots,a_r^p)$ and $(b_1^q,\dots,b_r^q)$ are dependent.
Now take $p=\alpha/\gamma$ and $q=\beta/\gamma$, and replace $a_i$ and $b_i$ by $a_i^\gamma$ and $b_i^\gamma$ respectively:
$$
\sum_{i=1}^r  (a_ib_i)^\gamma\leq \Big(\sum_{i=1}^r a_i^\beta\Big)^{\gamma/\alpha}\Big(\sum_{i=1}^r b_i^\beta\Big)^{\gamma/\beta}.
$$
Taking the $\gamma$-th root gives the desired inequality.
\end{proof}
For horizontal tensor products we have a H\"older inequality:
\begin{lemma} \label{lem:Holderbraces}
If $1/\alpha+1/\beta=1/\gamma$, then we have
$$
\mu_\gamma({\bf v}\otimes {\bf w})\leq \mu_\alpha({\bf v})\mu_\beta({\bf w})
$$
\end{lemma}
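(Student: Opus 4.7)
The plan is to unpack the definitions, reduce the inequality to a fixed index $i$, and then apply the generalized Hölder inequality proved in the preceding lemma.

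First I would write out both sides explicitly. Since ${\bf v}\otimes {\bf w}=(v_1\otimes w_1,\dots,v_r\otimes w_r)$ and $\langle v_i\otimes w_i, v_j\otimes w_j\rangle = \langle v_i,v_j\rangle\langle w_i,w_j\rangle$, the definition of $\mu_\gamma$ gives
\[
\mu_\gamma({\bf v}\otimes {\bf w}) = \max_i \Big(\sum_{j\neq i} |\langle v_i,v_j\rangle|^\gamma\,|\langle w_i,w_j\rangle|^\gamma\Big)^{1/\gamma}.
\]
On the right-hand side, $\mu_\alpha({\bf v})\mu_\beta({\bf w})$ is the product of two suprema over indices that are \emph{a priori} independent, so it suffices to bound the expression for each fixed $i$ separately and then maximize.

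Next, fix an index $i$ and set $a_j=|\langle v_i,v_j\rangle|$ and $b_j=|\langle w_i,w_j\rangle|$ for $j\neq i$. Since $1/\alpha+1/\beta=1/\gamma$, the generalized Hölder inequality from the preceding lemma yields
\[
\Big(\sum_{j\neq i}(a_jb_j)^\gamma\Big)^{1/\gamma} \leq \Big(\sum_{j\neq i} a_j^\alpha\Big)^{1/\alpha}\Big(\sum_{j\neq i} b_j^\beta\Big)^{1/\beta}.
\]
Each of the two factors on the right is bounded, respectively, by $\mu_\alpha({\bf v})$ and $\mu_\beta({\bf w})$, by definition of those quantities (they are the maxima over $i$ of exactly these sums). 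Hence for every $i$,
\[
\Big(\sum_{j\neq i} |\langle v_i,v_j\rangle|^\gamma|\langle w_i,w_j\rangle|^\gamma\Big)^{1/\gamma}\leq \mu_\alpha({\bf v})\,\mu_\beta({\bf w}).
\]
Taking the maximum over $i$ on the left gives the claimed inequality.

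There is essentially no obstacle: the only subtlety is making sure the generalized Hölder inequality is applied with the same index variable $j$ on both sides (which it is, since the two tuples have the same length $r$), and recognizing that bounding each fixed-$i$ sum by the global maxima $\mu_\alpha({\bf v})$ and $\mu_\beta({\bf w})$ is legitimate because these maxima dominate the individual $i$-th terms. This makes the argument a direct three-line application of the preceding lemma.
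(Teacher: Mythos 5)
Your proof is correct and follows exactly the same route as the paper: fix $i$, apply the generalized H\"older inequality to $a_j=|\langle v_i,v_j\rangle|$ and $b_j=|\langle w_i,w_j\rangle|$, bound the two factors by $\mu_\alpha({\bf v})$ and $\mu_\beta({\bf w})$, and take the maximum over $i$. No issues.
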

\begin{proof}
By the H\"older inequality we have
\begin{multline*}
\Big(\sum_{j\neq i}|\langle v_i\otimes w_i,v_j\otimes w_j\rangle|^\gamma\Big)^{1/\gamma}\leq
\Big(\sum_{j\neq i}(|\langle v_i,v_j\rangle|\cdot |\langle w_i,w_j\rangle|)^\gamma\Big)^{1/\gamma}\leq\\ \leq
\Big(\sum_{j\neq i}|\langle v_i,v_j\rangle|^\alpha\Big)^{1/\alpha}\Big(\sum_{j\neq i}|\langle w_i,w_j\rangle|^\beta\Big)^{1/\beta}
\end{multline*}
for all $i$. Taking the maximum over all $i$ on both sides gives the desired inequality.
\end{proof}
If we take $\beta\to\infty$ we get the inequalities
$$
\mu_{\alpha}({\bf v}\otimes {\bf w})\leq \mu_{\alpha}({\bf v})\mu({\bf w})
$$
and 
$$
\mu({\bf v}\otimes {\bf w})\leq \mu({\bf v})\mu({\bf w}).
$$

\begin{lemma}
If $\gamma>\alpha>0$ then and ${\bf v}=(v_1,\dots,v_r)$ is an $r$-tuple of unit vectors, then we have
$$
\mu_\gamma({\bf v})^{\gamma/\alpha}\leq\mu_\alpha({\bf v})\leq \mu_\gamma({\bf v})(r-1)^{1/\alpha-1/\gamma}.
$$
\end{lemma}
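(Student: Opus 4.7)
The plan is to prove both inequalities pointwise — that is, fixing an index $i$ and analyzing the nonnegative numbers $a_{ij} = |\langle v_i, v_j\rangle|$ for $j\neq i$ — and then take the maximum over $i$ at the end. The crucial input is that since $v_i, v_j$ are unit vectors, Cauchy--Schwarz gives $0 \leq a_{ij} \leq 1$. So the whole statement reduces to comparing $\ell^\alpha$ and $\ell^\gamma$ norms of length-$(r-1)$ nonnegative vectors whose entries lie in $[0,1]$.

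For the left inequality $\mu_\gamma({\bf v})^{\gamma/\alpha} \leq \mu_\alpha({\bf v})$, I would use that $a_{ij} \leq 1$ and $\gamma > \alpha$ imply $a_{ij}^\gamma \leq a_{ij}^\alpha$ termwise. Summing over $j\neq i$ and raising to the $1/\alpha$ power yields
$$
\Big(\sum_{j\neq i} a_{ij}^\gamma\Big)^{1/\alpha} \leq \Big(\sum_{j\neq i} a_{ij}^\alpha\Big)^{1/\alpha}.
$$
Choosing $i$ to be the index where $\mu_\gamma$ is attained, the left side equals $\mu_\gamma({\bf v})^{\gamma/\alpha}$, and the right side is bounded by $\mu_\alpha({\bf v})$ by definition.

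For the right inequality, I would apply the H\"older inequality of the previous lemma (or the classical one) with conjugate exponents $\gamma/\alpha$ and $\gamma/(\gamma-\alpha)$ to the sum $\sum_{j\neq i} 1 \cdot a_{ij}^\alpha$:
$$
\sum_{j\neq i} a_{ij}^\alpha \leq (r-1)^{(\gamma-\alpha)/\gamma}\Big(\sum_{j\neq i} a_{ij}^\gamma\Big)^{\alpha/\gamma}.
$$
Raising to the $1/\alpha$ power and simplifying the exponent $(\gamma-\alpha)/(\alpha\gamma) = 1/\alpha - 1/\gamma$ gives
$$
\Big(\sum_{j\neq i} a_{ij}^\alpha\Big)^{1/\alpha} \leq (r-1)^{1/\alpha - 1/\gamma}\Big(\sum_{j\neq i} a_{ij}^\gamma\Big)^{1/\gamma},
$$
valid for each $i$; taking the maximum over $i$ on the left and bounding the right by $\mu_\gamma({\bf v})$ finishes the proof.

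There is no serious obstacle here — the argument is essentially the standard monotonicity and H\"older comparison between $\ell^p$ quasinorms on a finite index set. The only mild point to be careful about is that the comparison $a_{ij}^\gamma \leq a_{ij}^\alpha$ genuinely uses the unit-vector hypothesis; without $a_{ij} \leq 1$ the first inequality would fail. Everything else is a direct application of H\"older and a bookkeeping of exponents.
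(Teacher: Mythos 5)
Your proof is correct and is essentially the same argument the paper uses: the paper obtains the right-hand inequality by applying its H\"older lemma for horizontal tensor products to ${\bf v}\otimes(1,\dots,1)$, which unwinds to exactly the H\"older computation with exponents $\gamma/\alpha$ and $\gamma/(\gamma-\alpha)$ that you carry out, and the left-hand inequality is the same termwise monotonicity (using $|\langle v_i,v_j\rangle|\leq 1$) that the paper dismisses as ``easy.'' No gaps.
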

\begin{proof}
The inequality on the left is easy. For the inequality on the right, let $w=(1,1,\dots,1)$
and identify $v\otimes w\in (V\otimes \C)^r\cong V^r$ with $v$. Then apply Lemma~\ref{lem:Holderbraces}.
\end{proof}
If we take the limit $\gamma\to\infty$ we get
$$
0 \leq\mu_\alpha({\bf v})\leq \mu({\bf v})(r-1)^{1/\alpha}.
$$

For an $r$-tuple ${\bf v}=(v_1,\dots,v_r)$ of vectors we define
$$
{\bf v}^{\otimes d}=\underbrace{{\bf v}\otimes {\bf v}\otimes \cdots\otimes {\bf v}}_d.
$$
\begin{lemma}
For an $r$-tuple of unit vectors ${\bf v}$ we have $\mu_\alpha({\bf v}^{\otimes d})=\mu_{d\alpha}({\bf v})^d$.
\end{lemma}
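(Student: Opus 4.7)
The plan is to unfold both sides using the definition of $\mu_\alpha$ applied to the horizontal power ${\bf v}^{\otimes d}$, and observe that both expressions coincide term by term after a routine manipulation of exponents.

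First I would unwind the notation of the horizontal tensor product: by definition, ${\bf v}^{\otimes d}$ is the $r$-tuple whose $i$-th component is $v_i \otimes v_i \otimes \cdots \otimes v_i = v_i^{\otimes d}$. The single fact I need from the inner product structure of a tensor product of Hilbert spaces is that on simple tensors,
$$\langle v_i^{\otimes d}, v_j^{\otimes d}\rangle = \langle v_i, v_j\rangle^d,$$
which in particular implies that each $v_i^{\otimes d}$ has unit length, so that $\mu_\alpha$ is well-defined on the tuple ${\bf v}^{\otimes d}$.

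Next I would plug this identity directly into the definition:
$$\mu_\alpha({\bf v}^{\otimes d})^\alpha = \max_i \sum_{j\neq i} \bigl|\langle v_i^{\otimes d}, v_j^{\otimes d}\rangle\bigr|^\alpha = \max_i \sum_{j\neq i} |\langle v_i, v_j\rangle|^{d\alpha}.$$
On the other hand, by definition of $\mu_{d\alpha}({\bf v})$,
$$\mu_{d\alpha}({\bf v})^{d\alpha} = \max_i \sum_{j\neq i} |\langle v_i, v_j\rangle|^{d\alpha}.$$
Taking $\alpha$-th roots of the first line and $(d\alpha)$-th roots of the second and then raising to the $d$-th power shows both quantities equal $\bigl(\max_i \sum_{j\neq i}|\langle v_i, v_j\rangle|^{d\alpha}\bigr)^{1/\alpha}$, which gives the identity $\mu_\alpha({\bf v}^{\otimes d}) = \mu_{d\alpha}({\bf v})^d$.

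There is no real obstacle here: the substantive content is entirely captured by the multiplicativity of the inner product on pure tensors, and the rest is bookkeeping with the exponents $\alpha$ and $d\alpha$. It is worth noting that one could also try to chain the Hölder inequality of Lemma~\ref{lem:Holderbraces} to compare $\mu_\alpha({\bf v}\otimes{\bf v}\otimes\cdots)$ to a product of $\mu$'s of ${\bf v}$, but that approach only yields an inequality; the direct computation above is what is needed to get equality.
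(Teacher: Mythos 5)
Your proof is correct and follows essentially the same route as the paper: both reduce the claim to the identity $\langle v_i^{\otimes d}, v_j^{\otimes d}\rangle = \langle v_i, v_j\rangle^d$ and then match exponents. The only difference is that you spell out the bookkeeping with the $\alpha$-th and $(d\alpha)$-th roots a bit more explicitly, which is harmless.
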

\begin{proof}
We have
$$
\Big(\sum_{j\neq i}|\langle v_i^{\otimes d},v_j^{\otimes d}\rangle|^{\alpha}\Big)^{1/\alpha}=
\Big(\sum_{j\neq i}|\langle v_i,v_j\rangle|^{d\alpha}\Big)^{1/\alpha}
$$
Taking the maximum over all $i$ on both sides gives the desired result.
\end{proof}
\section{Orthogonality of tensors}\label{sec3}
In this section, we study another measure for the orthogonality of pure tensors, which takes into account the tensor product structure of the vector space. It is important, when discussing pure tensors, to be clear which tensor product structure we are talking about. To be unambiguous, we make the following definition.
An {\em$d$-th order tensor space} is a pair ${\bf V}=(V,(V^{(1)},\dots,V^{(d)}))$
where $V^{(1)},\dots,V^{(d)}$ are finite dimensional Hilbert spaces and 
 $$V=V^{(1)}\otimes V^{(2)}\otimes \cdots\otimes V^{(d)}.$$ 
 A {\em pure tensor} (with respect to this tensor space) is an element in $V$ of the form 
 $$v^{(1)}\otimes v^{(2)}\otimes \cdots\otimes v^{(d)}$$
 with $v^{(i)}\in V^{(i)}$ for $i=1,2,\dots,d$.
%\begin{definition}
%Suppose that $v=(v_1,\dots,v_r)$ is tensor table, $\lambda_1,\dots,\lambda_r$ are positive real numbers   and $\alpha>0$.
%We define $[v]_\alpha$ as the maximum of 
%$$\Big(\sum_{i=1}^r |\langle v_i, z\rangle|^\alpha\Big)^{1/\alpha}
%$$
%over all pure unit tensors $z$.
%\end{definition}
%\begin{lemma}
%We have 
%$$
%1\leq [v]_{\alpha}\leq n^{1/\alpha}.
%$$
%\end{lemma}
%\begin{proof}
%For $z=v_1$ we have 
%$$
%[v]_\alpha\geq \Big(\sum_{i=1}^n |\langle v_i, v_1\rangle|^\alpha\Big)^{1/\alpha}\geq (|\langle  v_1,v_1\rangle|^\alpha)^{1/\alpha}=1.
%$$
%For every $z$ we have
%$$
%\Big(\sum_{i=1}^n|\langle v_i,z\rangle|^\alpha\Big)^{1/\alpha}\leq n^{1/\alpha},
%$$
%so $[v]_\alpha\leq n^{1/\alpha}$.
%\end{proof}
%\begin{example}
%If $v_1,\dots,v_n$ orthogonal vectors in the vector space $V$ (viewed as the tensor space $(V,(V))$), then $[v]_2=1$.
%\end{example}
Suppose that ${\bf V}=(V,(V^{(1)},\dots,V^{(d)}))$ and ${\bf W}=(W,(W^{(1)}\dots,W^{(e)}))$ are tensor product spaces.
Then their {\em horizontal tensor product} is the tensor space
$$
{\bf V}\otimes {\bf W}:=(V\otimes W,(V^{(1)},\dots,V^{(d)},W^{(1)},\dots,W^{(e)})).
$$
If ${\bf S}=(S_1,\dots,S_r)\in {\bf V}^r$ and ${\bf T}=(T_1,\dots,T_r)\in {\bf W}^r$
then we define
$$
{\bf S}\otimes {\bf T}:=(S_1\otimes T_1,\dots,S_r\otimes T_r)\in ({\bf V}\otimes {\bf W})^r.
$$
For the horizontal tensor product we have a H\"older inequality.
\begin{lemma}\label{lem:Holder}
If $1/\alpha+1/\beta=1/\gamma$, then we have
$$
[{\bf S}\otimes {\bf T}]_\gamma\leq [{\bf T}]_\alpha[{\bf S}]_\beta.
$$
\end{lemma}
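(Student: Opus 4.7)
The key observation is that a unit pure tensor in ${\bf V}\otimes{\bf W}$ (with respect to the combined tensor product structure of length $d+e$) is exactly of the form $u\otimes v$, where $u\in V$ and $v\in W$ are unit pure tensors with respect to the factorizations giving ${\bf V}$ and ${\bf W}$. This is immediate from the definition of ${\bf V}\otimes{\bf W}$ in the paragraph preceding the lemma. Under the product Hermitian form we then have the identity
$$
\langle S_i\otimes T_i,\, u\otimes v\rangle \;=\; \langle S_i,u\rangle\,\langle T_i,v\rangle,
$$
so everything reduces to a numerical H\"older inequality applied termwise in $i$.

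The plan is to fix arbitrary unit pure tensors $u\in V$ and $v\in W$, set $a_i=|\langle S_i,u\rangle|$ and $b_i=|\langle T_i,v\rangle|$, and invoke the generalized H\"older inequality proved just above (with the dual pair $(\beta,\alpha)$, which satisfies $1/\beta+1/\alpha=1/\gamma$):
$$
\Big(\sum_{i=1}^r (a_ib_i)^\gamma\Big)^{1/\gamma}\;\le\;\Big(\sum_{i=1}^r a_i^\beta\Big)^{1/\beta}\Big(\sum_{i=1}^r b_i^\alpha\Big)^{1/\alpha}.
$$
The left side equals $\bigl(\sum_i|\langle S_i\otimes T_i,u\otimes v\rangle|^\gamma\bigr)^{1/\gamma}$. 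The two factors on the right are bounded by $[{\bf S}]_\beta$ and $[{\bf T}]_\alpha$ respectively, since $u$ is a unit pure tensor in $V$ and $v$ is a unit pure tensor in $W$.

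Finally, taking the supremum over all such unit pure tensors $u,v$ on the left gives exactly $[{\bf S}\otimes{\bf T}]_\gamma$, since every unit pure tensor in ${\bf V}\otimes{\bf W}$ arises in this factored form. This yields
$$
[{\bf S}\otimes {\bf T}]_\gamma\;\le\;[{\bf S}]_\beta\,[{\bf T}]_\alpha,
$$
which is the stated inequality. I do not expect a substantive obstacle: the only mildly subtle point is verifying that the unit pure tensors in the combined tensor space are precisely the products $u\otimes v$, so that the supremum on the right-hand side of the definition of $[\,\cdot\,]_\gamma$ factors correctly; once that is noted, the result is just a single application of the generalized H\"older inequality already established in the paper.
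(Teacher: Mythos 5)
Your proof is correct and follows essentially the same route as the paper: both reduce to the generalized H\"older inequality applied to $a_i=|\langle S_i,u\rangle|$, $b_i=|\langle T_i,v\rangle|$, using that pure tensors in the horizontal product ${\bf V}\otimes{\bf W}$ factor as $u\otimes v$, and then take the supremum. Your explicit remark that the unit pure tensors of ${\bf V}\otimes{\bf W}$ are exactly the products of unit pure tensors is the one point the paper leaves implicit, and it is correct.
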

\begin{proof}
Using the H\"older inequality, we get
\begin{multline*}
\Big(\sum_{i=1}^r |\langle S_i\otimes T_i,x\otimes y\rangle |^\gamma\Big)^{1/\gamma}=
\Big(\sum_{i=1}^r |\langle S_i, x\rangle|^\gamma |\langle T_i, y\rangle|^\gamma\Big)^{1/\gamma}
\leq\\ \leq\Big(\sum_{i=1}^n|\langle S_i, x\rangle|^\alpha\Big)^{1/\alpha}\Big(\sum_{i=1}^n |\langle T_i, y|\rangle|^\beta\Big)^{1/\beta}.
\end{multline*}
Taking the supremum over all unit pure tensors $x$ and $y$ yields
the desired inequality.
\end{proof}
\begin{corollary}\label{cor:mtensor}
We have
$$
[{\bf S}^{\otimes d}]_{\alpha}=[{\bf S}]_{d\alpha}^d.
$$
\end{corollary}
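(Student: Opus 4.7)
The plan is to prove the corollary by establishing the two inequalities $[{\bf S}^{\otimes d}]_\alpha \geq [{\bf S}]_{d\alpha}^d$ and $[{\bf S}^{\otimes d}]_\alpha \leq [{\bf S}]_{d\alpha}^d$ separately. The lower bound is essentially free from the definitions, while the upper bound follows by an induction on $d$ using Lemma~\ref{lem:Holder} with the correct choice of H\"older exponents.

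For the lower bound, I would start from the definition of $[{\bf S}]_{d\alpha}$: for every $\varepsilon > 0$ there is a unit pure tensor $u$ (in the underlying tensor space of ${\bf S}$) with $\sum_i |\langle S_i, u\rangle|^{d\alpha} \geq [{\bf S}]_{d\alpha}^{d\alpha} - \varepsilon$. Then $u^{\otimes d}$ is again a unit pure tensor, now in the underlying tensor space of ${\bf S}^{\otimes d}$, and $\langle S_i^{\otimes d}, u^{\otimes d}\rangle = \langle S_i, u\rangle^d$. Hence
$$
[{\bf S}^{\otimes d}]_\alpha^\alpha \;\geq\; \sum_{i=1}^r |\langle S_i^{\otimes d}, u^{\otimes d}\rangle|^\alpha \;=\; \sum_{i=1}^r |\langle S_i, u\rangle|^{d\alpha} \;\geq\; [{\bf S}]_{d\alpha}^{d\alpha} - \varepsilon,
$$
and letting $\varepsilon \to 0$ gives $[{\bf S}^{\otimes d}]_\alpha \geq [{\bf S}]_{d\alpha}^d$.

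For the upper bound I would induct on $d$; the base case $d=1$ is trivial. For the inductive step, I write ${\bf S}^{\otimes d} = {\bf S}^{\otimes (d-1)} \otimes {\bf S}$ and apply Lemma~\ref{lem:Holder} with $\gamma = \alpha$, $\beta = d\alpha/(d-1)$, and the second exponent equal to $d\alpha$; one checks directly that $(d-1)/(d\alpha) + 1/(d\alpha) = 1/\alpha$, so these are admissible. This yields
$$
[{\bf S}^{\otimes d}]_\alpha \;\leq\; [{\bf S}^{\otimes (d-1)}]_{d\alpha/(d-1)} \cdot [{\bf S}]_{d\alpha}.
$$
By the inductive hypothesis with $\alpha' = d\alpha/(d-1)$, the first factor is at most $[{\bf S}]_{(d-1)\alpha'}^{d-1} = [{\bf S}]_{d\alpha}^{d-1}$, and multiplying gives $[{\bf S}^{\otimes d}]_\alpha \leq [{\bf S}]_{d\alpha}^d$, closing the induction.

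There is no real obstacle here: the only delicate point is picking the H\"older exponents so that the inductive hypothesis plugs in cleanly, and the telescoping choice $\beta = d\alpha/(d-1)$ is forced by the requirement $(d-1)\beta = d\alpha$. Together the two inequalities give $[{\bf S}^{\otimes d}]_\alpha = [{\bf S}]_{d\alpha}^d$.
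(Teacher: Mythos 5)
Your proof is correct and takes essentially the same approach as the paper: the lower bound by evaluating at $u^{\otimes d}$ for a (near-)maximizing unit pure tensor $u$, and the upper bound from the H\"older inequality of Lemma~\ref{lem:Holder}. The paper merely asserts that the reverse inequality ``follows from Lemma~\ref{lem:Holder}''; your induction with the telescoping exponents $d\alpha/(d-1)$ and $d\alpha$ is exactly the computation that assertion leaves implicit.
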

\begin{proof}
For some unit pure tensor $u$ we have 
$$
[{\bf S}]_{d\alpha}^d=\Big(\sum_{i=1}^r |\langle S_i,u\rangle|^{d\alpha}\Big)^{1/\alpha}=
\Big(\sum_{i=1}^r |\langle S_i^{\otimes d},u^{\otimes d}\rangle|^{\alpha}\Big)^{\alpha}\leq [{\bf S}^{\otimes d}]_\alpha.
$$
The inequality in the other direction follows from Lemma~\ref{lem:Holder}.
\end{proof}
If ${\bf V}=(V,(V^{(1)},\dots,V^{(d)})$ and ${\bf W}=(W,(W^{(1)},\dots,W^{(d)}))$ are tensor product spaces, then 
their {\em vertical tensor product} is 
$$
{\bf V}\boxtimes {\bf W}:=(V\otimes W,(V^{(1)}\otimes W^{(1)},\dots,V^{(d)}\otimes W^{(d)})).
$$
If $S\in {\bf V}$ and $T\in {\bf W}$ we define $S\boxtimes T$ as $S\otimes T$,
viewed inside the tensor product space  ${\bf V}\boxtimes {\bf W}$.
% If $v\in V$ and $w\in W$ are pure tensors, then we can view $v\otimes w$ as an element
%in the tensor product space $V\boxtimes W$. We will write $v\boxtimes w$ if we want
%to view $v\otimes w$ as an element of $V\boxtimes W$. If $v=(v_1,\dots,v_r)$ is a unit tensor table with respect to $(V,(V_1,\dots,V_d))$
%and $w=(w_1,\dots,w_s)$  is a unit tensor table with respect to $(W,(W_1,\dots,W_e))$ then $v\boxtimes w$ will be viewed
%as a tensor table with respect to $V\boxtimes W$.
If ${\bf S}\in {\bf V}^r$ and ${\bf T}\in {\bf W}^s$, then we define
$$
{\bf S}\boxtimes {\bf T}=(S_i\boxtimes T_j\mid 1\leq i\leq r,1\leq j\leq s)\in ({\bf V}\boxtimes {\bf W})^{rs}
$$

The measure  $[-]_\alpha$ also behaves multiplicatively with respect to the vertical tensor product.
\begin{proposition}\label{prop:verticaltensor}
Suppose that ${\bf V}=(V,(V^{(1)},\dots,V^{(d)}))$ and ${\bf W}=(W,(W^{(1)},\dots,W^{(d)}))$ are tensor product spaces, and $S\in {\bf V}^r$ and $T\in {\bf W}^s$.
Then we have
$$
[S\boxtimes T]_\alpha=[S]_\alpha[T]_\alpha.
$$
\end{proposition}
\begin{proof}
First, we will assume that $\alpha\leq 1$.
For a complex vector $b=(b_1,\dots,b_l)$ we have
\begin{equation}\label{eq:alphaineq}
\Big|\sum_{k=1}^l b_k\Big|^\alpha\leq \Big(\sum_{k=1} ^l|b_k|\Big)^\alpha=
\|b\|_1^\alpha\leq \|b\|_\alpha^\alpha=
 \sum_{k=1}^l |b_k|^\alpha.
\end{equation}
Suppose that $u$ is a pure tensor in ${\bf V}\boxtimes {\bf W}$. We can write 
$$
u=u^{(1)}\otimes u^{(2)}\otimes \cdots\otimes u^{(d)}
$$
with $u^{(e)}\in Z^{(e)}=V^{(e)}\otimes W^{(e)}$ and $\|u^{(e)}\|=1$ for all $e$.
Using the singular value decomposition, we can write
$$
u^{(e)}=\sum_{k}\lambda^{(e)}_k (x^{(e)}_k\boxtimes y^{(e)}_k)\in V^{(e)}\boxtimes W^{(e)}
$$
where $x_1^{(e)},x_2^{(e)},\dots$ and $y_1^{(e)},y_2^{(e)},\dots$ are (finite) sequences of orthonormal vectors for all $e$, 
 and $\lambda_k^{(e)}>0$ for all $k,e$. Since $u^{(e)}$ is a unit vector, we have $\sum_{k}(\lambda_k^{(e)})^2=1$.
We define $x^{(e)}=\sum_{k} \lambda_k^{(e)}$, $y^{(e)}=\sum_k\lambda_k^{(e)}$ for all $e$, and 
$$
x=x^{(1)}\otimes \cdots \otimes x^{(d)},\quad y=y^{(1)}\otimes \cdots \otimes y^{(d)}.
$$
Note that $x^{(e)}$ and $y^{(e)}$ are unit vectors, and $x$ and $y$ are pure tensors of unit length.
For a $d$-tuple $\underline{k}=(k_1,\dots,k_d)$ we define
$$
\lambda_{\underline{k}}=\prod_{e=1}^d \lambda_{k_e}^{(e)},\quad x_{\underline{k}}=x_{k_1}^{(1)}\otimes \cdots\otimes x_{k_d}^{(d)},\quad
y_{\underline{k}}=y_{k_1}^{(1)}\otimes\cdots \otimes y_{k_d}^{(d)}.
$$
So we can write
$$
u=\sum_{\underline{k}}\lambda_{\underline{k}} (x_{\underline{k}}\boxtimes  y_{\underline{k}}).
$$
This is a singular value decomposition of $u$, if $u$ is viewed as a tensor in 
$$(V\otimes W, (V^{(1)}\otimes \cdots\otimes V^{(d)},W^{(1)}\otimes \cdots\otimes W^{(d)})).
$$

Using the inequality (\ref{eq:alphaineq}) and the Cauchy-Schwarz inequality, we get
\begin{multline*}
\sum_{i,j} |\langle S_i\boxtimes T_j,u\rangle|^\alpha=\sum_{i,j}\Big|\sum_{\underline{k}}\lambda_{\underline{k}}\langle S_i\boxtimes T_j,x_{\underline{k}}\boxtimes y_{\underline{k}}\rangle\Big|^\alpha\leq
\sum_{i,j}\sum_{\underline{k}} \Big|\lambda_{\underline{k}}\langle S_i\boxtimes T_j,x_{\underline{k}}\boxtimes y_{\underline{k}}\rangle\Big|^\alpha=\\=
\sum_{\underline{k}}\lambda_{\underline{k}}^\alpha\sum_{i,j} |\langle S_i,x_{\underline{k}}\rangle|^\alpha |\langle T_j y_{\underline{k}}\rangle|^\alpha\leq
\sum_{\underline{k}}\Big(\lambda_{\underline{k}}^{\alpha/2}\sum_i|\langle S_i,x_{\underline{k}}\rangle|^\alpha\Big)\Big(\lambda_{\underline{k}}^{\alpha/2}\sum_j|\langle T_j,y_{\underline{k}}\rangle|^\alpha\Big)\leq\\
\leq \sqrt{\textstyle \sum_{\underline{k}}\lambda_{\underline{k}}^{\alpha}\Big(\sum_i|\langle S_i,x_{\underline{k}}\rangle|^{\alpha}\Big)^2} \sqrt{\textstyle \sum_{\underline{k}}
\lambda_{\underline{k}}^{\alpha}\Big(\sum_j|\langle T_j,y_{\underline{k}}\rangle|^{\alpha}\Big)^2}\leq\\
\leq \sqrt{\textstyle \sum_{\underline{k}}\lambda_{\underline{k}}^{\alpha}[{\bf S}]_\alpha^\alpha\sum_i|\langle S_i,x_{\underline{k}}\rangle|^{\alpha}} \sqrt{\textstyle \sum_{\underline{k}}
\lambda_{\underline{k}}^{\alpha}[{\bf T}]_{\alpha}^\alpha\sum_j|\langle T_j,y_{\underline{k}}\rangle|^{\alpha}}\leq\\
\leq [{\bf S}]_\alpha^{\alpha/2}[{\bf T}]_\alpha^{\alpha/2}\sqrt{\textstyle\sum_i |\langle S_i,\sum_{\underline{k}}\lambda_{\underline{k}} x_{\underline{k}}\rangle|^\alpha}
\sqrt{\textstyle\sum_j |\langle T_j,\sum_{\underline{k}}\lambda_{\underline{k}} y_{\underline{k}}\rangle|^\alpha}\leq\\
\leq [{\bf S}]_\alpha^{\alpha/2}[{\bf T}]_\alpha^{\alpha/2}\sqrt{\textstyle\sum_i |\langle S_i,x\rangle|^\alpha}
\sqrt{\textstyle\sum_j |\langle T_j,y\rangle|^\alpha}\leq
  [{\bf S}]_\alpha^{\alpha/2}[{\bf T}]_\alpha^{\alpha/2} [{\bf S}]_\alpha^{\alpha/2}[{\bf T}]_\alpha^{\alpha/2}= [{\bf S}]_\alpha^{\alpha}[{\bf T}]_\alpha^{\alpha}
\end{multline*}
Since the pure tensor $u$ was arbitrary, we have
$[{\bf S}\boxtimes {\bf T}]_\alpha^\alpha\leq[{\bf S}]_\alpha^\alpha [{\bf T}]_\alpha^\alpha$ and
$[{\bf S}\boxtimes {\bf T}]_\alpha\leq [{\bf S}]_\alpha[{\bf T}]_\alpha$.

If $\alpha\geq 1$, choose $m$ such that $\alpha/m\leq 1$.
By Corollary~\ref{cor:mtensor} we have
$$
[({\bf S}\boxtimes {\bf T})]_{\alpha}^m=[({\bf S}\boxtimes {\bf T})^{\otimes m}]_{\alpha/m}=[{\bf S}^{\otimes m}\boxtimes {\bf T}^{\otimes m}]_{\alpha/m}\leq
 [{\bf S}^{\otimes m}]_{\alpha/m}[{\bf T}^{\otimes m}]_{\alpha/m}=[{\bf S}]_{\alpha}^m[{\bf T}]_\alpha^m, 
$$
so we get $[{\bf S}\otimes {\bf T}]_\alpha\leq [{\bf S}]_\alpha [{\bf T}]_\alpha$.

Suppose that $\alpha>0$. There exists unit pure tensors $a\in {\bf V}$ and $b\in {\bf W}$ such that
$$
\sum_{i}|\langle S_i,a\rangle|^\alpha=[{\bf S}]_\alpha^\alpha\mbox{ and }\sum_j \langle T_j,b\rangle|^\alpha=[{\bf T}]^\alpha_\alpha.
$$
We have
$$
\sum_{i,j}|\langle S_i\boxtimes T_j,a\boxtimes b\rangle|^\alpha=\sum_{i,j} |\langle S_i,a\rangle|^\alpha |\langle T_j,b\rangle|^\alpha=
\sum_{i}|\langle S_i,a\rangle|^\alpha\sum_{j}|\langle T_j,b\rangle|^\alpha=[{\bf S}]_\alpha^\alpha [{\bf T}]_\alpha^\alpha.
$$
So it follows that $[{\bf S}\boxtimes {\bf T}]_\alpha^\alpha\geq [{\bf S}]_\alpha^\alpha[{\bf T}]_\alpha^\alpha$.
We conclude that $[{\bf S}\boxtimes {\bf T}]_\alpha=[{\bf S}]_\alpha[{\bf T}]_\alpha$.

\end{proof}

%\begin{corollary}\label{cor:norminequality}
%Suppose that $v=(v_1,\dots,v_n)$ is a tensor table.
%If $\alpha\geq \gamma>0$ then we have
%$$
%[v]_{\alpha}^{\alpha/\gamma}\leq [v]_\gamma\leq [v]_\alpha n^{1/\gamma-1/\alpha}.
%$$
%\end{corollary}
%\begin{proof}
%Suppose that $\alpha>\gamma$.
% Take $W=\C$ and $w_1=w_2=\cdots w_n=1$.
%Then we can identify $v_i\otimes w_i$ with $v_i$. 
%Define $\beta$ by $1/\alpha+1/\beta=1/\gamma$.
%We have
%$$
%[v]_\gamma=[v]_\alpha[w]_\beta\leq [v]_\alpha n^{1/\beta}=[v]_\alpha n^{1/\gamma-1/\alpha}.
%$$
%Since $|\langle v_i,z\rangle| \leq 1$ and $\alpha\geq \gamma$, we have
%$$
%\sum_{i=1}^n|\langle v_i,z\rangle|^\alpha\leq\sum_{i=1}^n|\langle v_i, z\rangle|^\gamma
%$$
%The maximum value on the left hand size is $[v]_\alpha^\alpha$ and the maximum value on the right hand side is $[v]_\gamma^\gamma$.
%So we have $[v]_\alpha^\alpha\leq [v]_\gamma^\gamma$.
%\end{proof}
The norm $[-]_\alpha$ is hard to compute in practice because we have to solve a optimization problem.
But $\mu_\alpha(-)$ is easier to compute.
Fortunately,  $[-]_\alpha$ can be estimated in terms of $\mu$:

\begin{lemma}\label{lem:comparemeasures}
For an $r$-tuple ${\bf v}=(v_1,\dots,v_r)$ of pure tensors of unit length and $\alpha>0$ we have
$$
\mu_{\alpha}({\bf v})^\alpha+1\leq [{\bf v}]_\alpha^\alpha.
$$
\end{lemma}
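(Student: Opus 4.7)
The proof should be essentially a one-line application of the definitions. The key observation is that the maximization defining $[{\bf v}]_\alpha$ ranges over all pure tensors $u$ of unit length, and crucially each $v_i$ itself is such a tensor. So the plan is to simply test the definition of $[{\bf v}]_\alpha$ against a cleverly chosen unit pure tensor, namely one of the $v_i$'s.

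Concretely, I would first let $i^*$ be an index at which the maximum defining $\mu_\alpha({\bf v})$ is attained, so that
$$\mu_\alpha({\bf v})^\alpha = \sum_{j \neq i^*} |\langle v_{i^*}, v_j\rangle|^\alpha.$$
Then, since $v_{i^*}$ is itself a pure tensor of unit length, it is admissible in the supremum defining $[{\bf v}]_\alpha$, giving
$$[{\bf v}]_\alpha^\alpha \;\geq\; \sum_{i=1}^r |\langle v_i, v_{i^*}\rangle|^\alpha \;=\; |\langle v_{i^*}, v_{i^*}\rangle|^\alpha + \sum_{j \neq i^*} |\langle v_{i^*}, v_j\rangle|^\alpha \;=\; 1 + \mu_\alpha({\bf v})^\alpha,$$
where we used $\|v_{i^*}\| = 1$ to evaluate the $i = i^*$ term. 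This is exactly the claimed inequality.

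There is no real obstacle here: the entire content of the lemma is the observation that the unit pure tensors $v_i$ are themselves test vectors for the spectral-type quantity $[{\bf v}]_\alpha$. The only thing to check is that the definitions match up (the $+1$ comes from the diagonal term $\langle v_{i^*}, v_{i^*}\rangle$ that $\mu_\alpha$ excludes but $[{\bf v}]_\alpha$ includes), which is immediate.
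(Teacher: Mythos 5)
Your proof is correct and is essentially identical to the paper's: both use the fact that each $v_i$ is itself an admissible unit pure tensor in the supremum defining $[{\bf v}]_\alpha$, so that $1+\sum_{j\neq i}|\langle v_i,v_j\rangle|^\alpha=\sum_{j=1}^r|\langle v_i,v_j\rangle|^\alpha\leq [{\bf v}]_\alpha^\alpha$, and then one maximizes over $i$.
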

\begin{proof}
Suppose that $\alpha>0$. For every $i$ we have
$$
1+\sum_{j\neq i}|\langle v_i,v_j\rangle|^\alpha= \sum_{j=1}^r|\langle v_i,v_j\rangle|^\alpha\leq [{\bf v}]_{\alpha}^\alpha.
$$
Taking the maximum over all $i$ gives the desired inequality.
\end{proof}

\begin{proposition}\label{prop:comparemeasures}
 For an $r$-tuple  ${\bf v}=(v_1,\dots,v_r)$ of pure tensors of unit length and $\alpha\geq 1$ we have
$$
 [{\bf v}]_{2\alpha}^{2\alpha}\leq \mu_{\alpha}({\bf v})^{\alpha}+1.
$$
\end{proposition}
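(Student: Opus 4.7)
The approach splits into the case $\alpha=1$, handled directly, and then a bootstrap to general $\alpha$. For the base case $\alpha=1$ the inequality reads $[{\bf v}]_2^2\le \max_i\sum_j |\langle v_i,v_j\rangle|$, and I would prove it by writing $\sum_i|\langle v_i,u\rangle|^2=\langle Fu,u\rangle$ with the frame operator $F=\sum_i v_i v_i^\star$. For any unit vector $u$, this is at most $\|F\|_{\mathrm{op}}$, which equals the largest eigenvalue of the Gram matrix $G_{ij}=\langle v_i,v_j\rangle$, and Gershgorin's theorem yields $\|F\|_{\mathrm{op}}\le\max_i\sum_j|G_{ij}|=\mu_1({\bf v})+1$. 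Note that this step does not even require $u$ to be a pure tensor.

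For integer $\alpha=m\ge 1$, apply the base case to the tuple ${\bf w}={\bf v}^{\otimes m}\in({\bf V}^{\otimes m})^r$; its entries $w_i=v_i^{\otimes m}$ are unit pure tensors with $\langle w_i,w_j\rangle=\langle v_i,v_j\rangle^m$. Squaring the identity $[{\bf v}^{\otimes m}]_2=[{\bf v}]_{2m}^m$ from Corollary~\ref{cor:mtensor} and combining with the base case yields
$$[{\bf v}]_{2m}^{2m}=[{\bf v}^{\otimes m}]_2^2\le \max_i \sum_j|\langle w_i,w_j\rangle|=\max_i\sum_j|\langle v_i,v_j\rangle|^m=\mu_m({\bf v})^m+1,$$
which is the proposition at integer $\alpha$.

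For general real $\alpha\ge 1$ I would proceed directly: fix a unit pure tensor $u$, write $\xi_i=\langle v_i,u\rangle$, and introduce the auxiliary vector $\eta=\sum_i\overline{\xi_i}|\xi_i|^{2\alpha-2}v_i\in V$, so that $\langle\eta,u\rangle=\sum_i|\xi_i|^{2\alpha}$. Cauchy--Schwarz then yields
$$\Big(\sum_i|\xi_i|^{2\alpha}\Big)^2\le \|\eta\|^2\le \sum_{i,j}|\xi_i|^{2\alpha-1}|\xi_j|^{2\alpha-1}|\langle v_i,v_j\rangle|,$$
and the remaining task is to bound this double sum by $\bigl(\max_i\sum_j|\langle v_i,v_j\rangle|^\alpha\bigr)\cdot\sum_i|\xi_i|^{2\alpha}$, applying H\"older with paired exponents $(\alpha,\alpha/(\alpha-1))$ in each summation index and using $|\xi_i|\le 1$ to reabsorb excess powers of $|\xi_i|$ into $\sum_i|\xi_i|^{2\alpha}$. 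This last step is the main obstacle: both sides of the proposition are log-convex functions of $\alpha$, so validity at integer values does not interpolate automatically, and the H\"older accounting must be arranged so that the row-sum norm $\max_i\sum_j|\langle v_i,v_j\rangle|^\alpha$ appears on the right (rather than a weaker Frobenius-type quantity such as $\sqrt{\sum_{i,j}|\langle v_i,v_j\rangle|^{2\alpha}}$).
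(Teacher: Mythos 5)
Your base case and the integer bootstrap are correct: the $\alpha=1$ statement is exactly Gershgorin applied to the Gram matrix (and, as you note, holds for arbitrary unit $u$, not just pure tensors), and combining $[{\bf v}^{\otimes m}]_2=[{\bf v}]_{2m}^m$ with $\mu_1({\bf v}^{\otimes m})=\mu_m({\bf v})^m$ cleanly yields the proposition for integer $\alpha$. You are also right that log-convexity does not let you interpolate between integers. But the proposition asserts the bound for all real $\alpha\geq 1$, and your treatment of that case is a sketch that does not close; you concede as much. Concretely, after Cauchy--Schwarz you are left needing
$$\sum_{i,j}|\xi_i|^{2\alpha-1}|\xi_j|^{2\alpha-1}\,|\langle v_i,v_j\rangle|\;\leq\;\Big(\max_i\sum_j|\langle v_i,v_j\rangle|^{\alpha}\Big)\sum_i|\xi_i|^{2\alpha},$$
and the natural tools fail: a one-variable H\"older in $j$ with exponents $(\alpha,\alpha/(\alpha-1))$ leaves a factor $\sum_i|\xi_i|^{2\alpha-1}$ that is not controlled by $\sum_i|\xi_i|^{2\alpha}$, while the symmetric Schur test would require weights of the form $|\xi_i|^{2\alpha-2}$, which can vanish and make the test constant blow up. So there is a genuine gap for non-integer $\alpha$.

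The idea you are missing is the paper's weighted eigenvector argument. Take $w$ to be the \emph{optimal} unit pure tensor for $[{\bf v}]_{2\alpha}$, form $A=\sum_i|\langle v_i,w\rangle|^{2\alpha-2}v_iv_i^{\star}=ZDZ^{\star}$ with $D_{i,i}=|\langle v_i,w\rangle|^{2\alpha-2}$, and note $[{\bf v}]_{2\alpha}^{2\alpha}=w^{\star}Aw\leq\lambda_{\max}(A)=\lambda_{\max}(Z^{\star}ZD)$. Now apply the Gershgorin-type step not to the Gram matrix but to $Z^{\star}ZD$: pick an eigenvector for $\lambda_{\max}$ and its largest-modulus coordinate $x_i$, which gives $\lambda_{\max}\leq\sum_j|\langle v_j,w\rangle|^{2\alpha-2}|\langle v_j,v_i\rangle|$, and then H\"older with exponents $\big(\tfrac{\alpha}{\alpha-1},\alpha\big)$ turns the weight sum into $\big(\sum_j|\langle v_j,w\rangle|^{2\alpha}\big)^{(\alpha-1)/\alpha}=[{\bf v}]_{2\alpha}^{2\alpha-2}$ \emph{exactly because $w$ is the maximizer}, yielding $[{\bf v}]_{2\alpha}^{2\alpha}\leq[{\bf v}]_{2\alpha}^{2\alpha-2}\big(\mu_{\alpha}({\bf v})^{\alpha}+1\big)^{1/\alpha}$; dividing by $[{\bf v}]_{2\alpha}^{2\alpha-2}$ finishes. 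The crucial moves your sketch lacks are (i) anchoring the weights at the optimizer $w$ rather than at an arbitrary $u$, so the weight sum reproduces a power of $[{\bf v}]_{2\alpha}$ that can be divided out, and (ii) bounding $\lambda_{\max}$ via the largest coordinate of an actual eigenvector of $Z^{\star}ZD$ rather than via a Schur test, which sidesteps the vanishing-weight problem.
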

\begin{proof}

Suppose that $\alpha\geq 2$.
Choose a unit pure tensor $w$ such that 
$$
[{\bf v}]_{2\alpha}=\Big(\sum_{i=1}^r |\langle v_i,w\rangle|^{2\alpha}\Big)^{1/(2\alpha)}.
$$
Let $D$ be the $r\times r$ diagonal matrix with $D_{i,i}=|\langle v_i,w\rangle|^{2\alpha-2}$
and define
$$Z=\begin{pmatrix} v_1 & \cdots & v_r\end{pmatrix},
$$
 where $v_1,\dots,v_r$ are viewed as
column vectors with respect to some orthonormal basis.
Consider the Hermitian matrix
$$
A=\sum_{i=1}^r |\langle v_i,w\rangle|^{2\alpha-2}v_iv_i^\star=ZDZ^\star
$$
Then we have $w^\star A w=[{\bf v}]_{2\alpha}^{2\alpha}$. If $\lambda\in \R$ is the largest eigenvalue of $A$, then we get
 $[{\bf v}]_{2\alpha}^{2\alpha}\leq \lambda$.
 The largest eigenvalue of the matrix $B=Z^\star ZD$ is $\lambda$ as well.
 Let
 $$
 x=\begin{pmatrix}
 x_1\\ \vdots\\x_r
 \end{pmatrix}
 $$
 be an eigenvector of $B$ with eigenvalue $\lambda$.
 We have
 $$
\lambda x_i=\sum_{j=1}^r|\langle v_j,w\rangle|^{2\alpha-2}\langle v_j,v_i\rangle x_j.
 $$
 for $i=1,2,\dots,r$.
Choose $i$ such that $|x_i|$ is maximal.
Then we have
$$
\lambda|x_i|\leq \sum_{j=1}^r|\langle v_j,w\rangle|^{2\alpha-2}|\langle v_j,v_i\rangle| |x_j|\leq  \sum_{j=1}^r|\langle v_j,w\rangle|^{2\alpha-2}|\langle v_j,v_i\rangle| |x_i|.
$$
If we set $\beta=\alpha/(\alpha-1)$ and $\gamma=\alpha$, then $1/\beta+1/\alpha=1$ and by
H\"older's inequality we get 
\begin{multline*}
[{\bf v}]_{2\alpha}^{2\alpha}\leq \lambda\leq \sum_{j=1}^r|\langle v_j,w\rangle|^{2\alpha-2}|\langle v_j,v_i\rangle|\leq
\Big(\sum_{j=1}^r |\langle v_j,w\rangle|^{(2\alpha-2)\beta}\Big)^{1/\beta}\Big(\sum_{j=1}^r |\langle v_j,v_i\rangle|^\alpha\Big)^{1/\alpha}\leq\\
\Big(\sum_{j=1}^r|\langle v_j,w\rangle|^{2\alpha}\Big)^{(\alpha-1)/\alpha}(\mu_\alpha({\bf v})^\alpha+1)^{1/\alpha}=
 [{\bf v}]_{2\alpha}^{2\alpha-2}(\mu_{\alpha}({\bf v})^\alpha+1)^{1/\alpha}.
\end{multline*}
So we conclude that $[{\bf v}]^{2\alpha}_{2\alpha}\leq \mu_{\alpha}({\bf v})^\alpha+1$.

\end{proof}
\begin{example}
The inequality in Proposition~\ref{prop:comparemeasures} does not hold when $\alpha<1$.
For example,  if ${\bf e}=(e_1,e_2)$ in the (trivial) tensor product space $(\C^2,(\C^2))$, then
we have $\mu_{\alpha}({\bf e})=0$. If $0<\alpha< 1$, then the maximum of
$$
\Big(|\langle e_1,u\rangle|^{2\alpha}+|\langle e_2,u\rangle|^{2\alpha}\Big)^{1/(2\alpha)}
$$
is attained for $u=(e_1+e_2)/\sqrt{2}$. So we have
$$
[{\bf v}]_{2\alpha}^{2\alpha}=2\cdot 2^{-\alpha}=2^{(1-\alpha)/(2\alpha)}>1=1+\mu_{\alpha}({\bf v})^\alpha.
$$
\end{example}

%\begin{lemma}
%If $[v]_1<2$, then $v_1,\dots,v_n$ are independent.
%\end{lemma}
%\begin{proof}
%Suppose that $\sum_{i=1}^n\lambda_iv_i=0$ and choose $j$ such that $|\lambda_j|\geq |\lambda_i|$ for all $i\neq j$.
%Then we have
%$$
%0=(\sum_i\lambda_i v_i)\cdot v_j=\sum_{i=1}^n \lambda_i(v_i\cdot v_j)
%$$
%so
%$$
%|\lambda_i|=|\lambda_i(v_i\cdot v_i)|\leq \sum_{j\neq i}|\lambda_j| |v_i\cdot v_j|\leq |\lambda_i| \sum_{j\neq i}|v_i\cdot v_j|.
%$$
%It follows that 
%$$
%1\leq \sum_{i\neq j}|v_i\cdot v_j|\leq \sum_{i=1}^n|v_i\cdot v_j|-1\leq [v]_1-1.
%$$
%So we conclude that $[v]_1\geq 2$.
%\end{proof}

\section{$t$-orthogonality}\label{sec4}
In this section we will discuss a notion of orthogonality for pure tensors that is stronger than the usual notion of orthogonality. 
Recall that an $r$-tuple ${\bf S}=(S_1,\dots,S_r)$ of unit tensors is $t$-orthogonal if $[{\bf S}]_{2/t}=1$.
\begin{lemma}\label{lem:tplusu}
Suppose that ${\bf S}=(S_1,\dots,S_r)$ is $t$-orthogonal $r$-tuple of unit tensors, and ${\bf T}=(T_1,\dots,T_r)$ is an $u$-orthogonal $r$-tuple of unit tensors.
Then ${\bf S}\otimes {\bf T}$ is $(t+u)$-orthogonal. 
\end{lemma}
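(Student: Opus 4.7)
The plan is to establish $[{\bf S}\otimes{\bf T}]_{2/(t+u)}=1$ by squeezing: the upper bound will come from the horizontal H\"older inequality (Lemma~\ref{lem:Holder}), and the lower bound from Lemma~\ref{lem:comparemeasures}.

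To set up H\"older, I would put $\alpha=2/t$, $\beta=2/u$, and $\gamma=2/(t+u)$; since $t,u\ge 1$ these are positive real numbers, and
$$
\frac{1}{\alpha}+\frac{1}{\beta}=\frac{t}{2}+\frac{u}{2}=\frac{t+u}{2}=\frac{1}{\gamma}.
$$
Lemma~\ref{lem:Holder} therefore yields
$$
[{\bf S}\otimes{\bf T}]_{2/(t+u)}\le [{\bf S}]_{2/t}\,[{\bf T}]_{2/u}=1\cdot 1=1,
$$
which gives the upper half of the desired equality.

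For the reverse inequality, I would note that each $S_i\otimes T_i$ is a pure unit tensor in the horizontal tensor product space ${\bf V}\otimes{\bf W}$---a pure tensor in ${\bf V}$ horizontally tensored with a pure tensor in ${\bf W}$ is again pure in ${\bf V}\otimes{\bf W}$---so ${\bf S}\otimes{\bf T}$ is itself an $r$-tuple of pure tensors of unit length. Consequently Lemma~\ref{lem:comparemeasures} applies to it with exponent $\gamma=2/(t+u)$ and gives
$$
[{\bf S}\otimes{\bf T}]_{2/(t+u)}^{2/(t+u)}\ge \mu_{2/(t+u)}({\bf S}\otimes{\bf T})^{2/(t+u)}+1\ge 1,
$$
so $[{\bf S}\otimes{\bf T}]_{2/(t+u)}\ge 1$. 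Combining with the H\"older bound above forces equality, which is precisely the definition of $(t+u)$-orthogonality.

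There is no serious obstacle here: once one verifies that $2/t$, $2/u$, and $2/(t+u)$ fit the H\"older conjugate relation $1/\alpha+1/\beta=1/\gamma$, and observes that horizontal tensoring preserves purity so that the matching lower bound $[{\bf S}\otimes{\bf T}]_\gamma\ge 1$ comes for free from Lemma~\ref{lem:comparemeasures}, the argument reduces to a one-line sandwich.
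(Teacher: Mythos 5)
Your argument is essentially the paper's: both prove $[{\bf S}\otimes{\bf T}]_{2/(t+u)}=1$ by sandwiching it between $1$ and $[{\bf S}]_{2/t}[{\bf T}]_{2/u}=1$, the upper bound coming from the horizontal H\"older inequality (Lemma~\ref{lem:Holder}) with $\alpha=2/t$, $\beta=2/u$, $\gamma=2/(t+u)$. The only difference is that you justify the lower bound $[{\bf S}\otimes{\bf T}]_{2/(t+u)}\geq 1$ explicitly via Lemma~\ref{lem:comparemeasures} (which tacitly requires the $S_i$ and $T_i$ to be pure, a hypothesis not literally in the lemma's statement but satisfied in every application), whereas the paper simply asserts it; this is a harmless refinement, not a different route.
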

\begin{proof}
From $[{\bf S}]_{2/t}=1$ and $[{\bf T}]_{2/e}=1$ follows that
$$
1\leq [{\bf S}\otimes {\bf T}]_{2/(t+u)}\leq [{\bf S}]_{2/t}[{\bf T}]_{2/u}\leq 1
$$
by  H\"older's inequality. So $[{\bf S}\otimes {\bf T}]_{2/(t+u)}=1$ and ${\bf S}\otimes {\bf T}$ is $(t+u)$-orthogonal.
\end{proof}
\begin{example}
If $x_1,\dots,x_r$ are orthogonal in $\C^p$, and $y_1,\dots,y_r\in \C^q$ are orthogonal, then  by Lemma~\ref{lem:tplusu}
$$
(x_1\otimes y_1,\dots,x_r\otimes y_r)\in (\C^p\otimes \C^q)^r
$$
is $2$-orthogonal. 
%The singular value decomposition shows that every tensor in $\C^p\otimes \C^q$ is $2$-orthogonal.
\end{example}

Orthogonality is also stable under taking vertical tensor products.
\begin{lemma}
If ${\bf S}=(S_1,\dots,S_r)$ is an $r$-tuple of unit tensors, ${\bf T}=(T_1,\dots,T_s)$ is an $S$-tuple of unit tensors, 
and ${\bf S}$ and ${\bf T}$ are both $t$-orthogonal, then ${\bf S}\boxtimes {\bf T}$ is also $t$-orthogonal.
\end{lemma}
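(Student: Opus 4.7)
The plan is to reduce the claim to Proposition~\ref{prop:verticaltensor}, which asserts that $[\,\cdot\,]_\alpha$ is multiplicative under the vertical tensor product. Since $t$-orthogonality is defined purely in terms of the value $[{\bf S}]_{2/t}=1$ together with the condition that the tensors involved have unit length, both of these properties need to be verified for ${\bf S}\boxtimes {\bf T}$.

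First I would observe that each entry of ${\bf S}\boxtimes {\bf T}$, namely $S_i\boxtimes T_j$, is a unit tensor: the norm on ${\bf V}\boxtimes {\bf W}$ restricts to the usual tensor product norm, so $\|S_i\boxtimes T_j\| = \|S_i\|\cdot \|T_j\| = 1\cdot 1 = 1$. This verifies the unit length requirement in Definition~\ref{def:tortho}.

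Next I would apply Proposition~\ref{prop:verticaltensor} with $\alpha = 2/t$, giving
\[
[{\bf S}\boxtimes {\bf T}]_{2/t} \;=\; [{\bf S}]_{2/t}\,[{\bf T}]_{2/t} \;=\; 1\cdot 1 \;=\; 1,
\]
which is exactly the condition that ${\bf S}\boxtimes {\bf T}$ is $t$-orthogonal.

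There is no real obstacle here, since all the work has already been done in Proposition~\ref{prop:verticaltensor}; the lemma is essentially a direct corollary, with the only content being the bookkeeping observation that the components of ${\bf S}\boxtimes {\bf T}$ remain unit tensors. (By contrast, the analogous horizontal statement in Lemma~\ref{lem:tplusu} required H\"older's inequality precisely because $[\,\cdot\,]_\alpha$ is only sub-multiplicative, not multiplicative, under the horizontal tensor product.)
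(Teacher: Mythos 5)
Your proof is correct and matches the paper's: the lemma is stated there as an immediate consequence of Proposition~\ref{prop:verticaltensor}, exactly as you argue by taking $\alpha=2/t$. Your additional check that the entries $S_i\boxtimes T_j$ remain unit tensors is a sensible bit of bookkeeping the paper leaves implicit.
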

\begin{proof}
This follows from Proposition~\ref{prop:verticaltensor}.
\end{proof}
Using horizontal and vertical tensor product, we can easily see that  the tensor $M_{p,q,r}$ has a Diagonal Singular Value Decomposition.
\begin{proposition}\label{prop:eijk}
We define
$$
{\bf e}=(e_{j,k}\otimes e_{k,i}\otimes e_{i,j}\mid 1\leq i\leq p, 1\leq j\leq q,1\leq k\leq r)\in (\C^{p\times q}\otimes \C^{q\times r}\otimes \C^{r\times p})^{pqr}.
$$
Then ${\bf e}$ is $2$-orthogonal.
\end{proposition}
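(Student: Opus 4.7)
The plan is to prove directly that $[{\bf e}]_1\leq 1$; the reverse inequality $[{\bf e}]_1\geq 1$ is automatic because each coordinate of ${\bf e}$ is itself a unit pure tensor and hence an admissible test vector in the definition of $[{\bf e}]_1$. Each component $e_{j,k}\otimes e_{k,i}\otimes e_{i,j}$ has unit Frobenius norm on every factor, so ${\bf e}$ is indeed an $r$-tuple of unit tensors. It remains to show that for every unit pure tensor $u=A\otimes B\otimes C$ in the ambient $3$-factor space $\C^{q\times r}\otimes \C^{r\times p}\otimes \C^{p\times q}$, where $\|A\|_F=\|B\|_F=\|C\|_F=1$, one has $\sum_{i,j,k}|\langle e_{j,k}\otimes e_{k,i}\otimes e_{i,j},u\rangle|\leq 1$.

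Using the identity $\langle e_{a,b},X\rangle=\overline{X_{a,b}}$ coming from the trace inner product, the inequality to be established becomes
\[
\sum_{i,j,k} |A_{j,k}|\,|B_{k,i}|\,|C_{i,j}|\;\leq\;1.
\]
The key observation is that if one sets $\tilde A,\tilde B,\tilde C$ to be the entry-wise absolute values of $A,B,C$, then the triangular index pattern $(j,k),(k,i),(i,j)$ closes up into a matrix trace, so the left side equals $\trace(\tilde A\tilde B\tilde C)$. Now $\|\tilde X\|_F=\|X\|_F$ for any matrix $X$, so I would chain two standard facts: the Cauchy--Schwarz bound $|\trace(XY)|\leq \|X\|_F\|Y\|_F$ and the Frobenius submultiplicativity $\|XY\|_F\leq \|X\|_F\|Y\|_F$ (itself an immediate Cauchy--Schwarz applied row-by-column). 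Applying these in succession gives
\[
\trace(\tilde A\tilde B\tilde C)\leq \|\tilde A\|_F\,\|\tilde B\tilde C\|_F\leq \|\tilde A\|_F\,\|\tilde B\|_F\,\|\tilde C\|_F=\|A\|_F\|B\|_F\|C\|_F=1,
\]
which is the desired bound and hence gives $[{\bf e}]_{2/2}=[{\bf e}]_1=1$, i.e.\ $2$-orthogonality.

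I do not foresee a genuine obstacle; the estimate is a short chain of Cauchy--Schwarz applications, and the one thing to be careful about is matching the indices so that the entry-wise absolute-value matrices multiply in the correct order to produce a trace. One could alternatively try to build ${\bf e}$ from smaller $2$-orthogonal tuples via $\boxtimes$ and invoke Proposition~\ref{prop:verticaltensor}, but because the $3$-factor tensor space structure on $\C^{q\times r}\otimes\C^{r\times p}\otimes\C^{p\times q}$ differs from the $2$-factor structure one naturally obtains from $\boxtimes$, the cleanest and shortest route is the direct Cauchy--Schwarz computation just sketched.
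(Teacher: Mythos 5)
Your proof is correct, but it takes a genuinely different route from the paper's. The paper never touches coordinates: it realizes ${\bf e}$ as a vertical tensor product $P\boxtimes Q\boxtimes R$ of three $2$-orthogonal tuples, each padded with a trivial factor $\C$ (e.g. $P=(e_i\otimes 1\otimes e_i)\in(\C^p\otimes\C\otimes\C^p)^p$) precisely so that the $\boxtimes$-product reproduces the correct $3$-factor structure on $\C^{p\times q}\otimes\C^{q\times r}\otimes\C^{r\times p}$, and then invokes the multiplicativity $[{\bf S}\boxtimes{\bf T}]_\alpha=[{\bf S}]_\alpha[{\bf T}]_\alpha$ of Proposition~\ref{prop:verticaltensor}, a heavy general result whose proof for $\alpha\le1$ itself runs through singular value decompositions and repeated Cauchy--Schwarz. (So your stated reason for avoiding the $\boxtimes$ route --- that it yields the wrong factor structure --- is not quite right; the padding trick is exactly how the paper circumvents that.) Your direct argument instead reduces $[{\bf e}]_1\le1$ to the estimate $\trace(\tilde A\tilde B\tilde C)\le\|A\|\,\|B\|\,\|C\|$ for the entrywise absolute values, obtained from $|\trace(XY)|\le\|X\|\,\|Y\|$ together with submultiplicativity of the Frobenius norm; the trace identity, the index bookkeeping, and the observation that $[{\bf e}]_1\ge1$ is automatic all check out. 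What your approach buys is a short, self-contained, elementary proof of this specific proposition; what the paper's approach buys is a reusable mechanism (vertical and horizontal products of $t$-orthogonal tuples) that it deploys again, for instance in the proof of Theorem~\ref{theo:groupalgebra}.
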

\begin{proof}
%\begin{example}
Let $e_1,\dots,e_p$ denote the orthonormal basis of $\C^p$. Then $(e_1,\dots,e_p)$ is $1$-orthogonal.
So
$$
(e_i\otimes e_i\mid i=1,\dots,p)\in (\C^p\otimes \C^p)^p
$$
is  $2$-orthogonal. The tuples 
\begin{eqnarray*}
P&=& (e_i\otimes 1\otimes e_i\mid 1\leq i\leq p)\in( \C^p\otimes \C\otimes \C^p)^p,\\
Q&=& (e_j\otimes e_j\otimes 1\mid 1\leq j\leq q)\in( \C^q\otimes \C^q\otimes \C)^q,\\
R&=& (1\otimes e_k\otimes e_k\mid 1\leq k\leq r)\in( \C\otimes \C^r\otimes \C^r)^r.
\end{eqnarray*}
are $2$-orthogonal.
We will write $e_{i,j}$ instead of $e_i\boxtimes e_j$.
Now  the tuple
$$
P\boxtimes Q\boxtimes R=(e_{i,j}\otimes e_{j,k}\otimes e_{k,i}\mid1\leq i\leq p,1\leq j\leq q,1\leq k\leq r)\in (\C^{q\times r}\otimes \C^{r\times p}\otimes \C^{p\times q})^{pqr}
$$
is $2$-orthogonal as well. 
\end{proof}
%By Proposition~\ref{prop:orthogonalbound} we have the inequality
%$$
%pqr\leq \dim(\C^{pq}\otimes \C^{qr}\otimes \C^{rp})^{1/2}=pqr.
%$$
%So we have in fact equality.
%Then tensor corresponding to matrix multiplication of a $p\times q$ and a $q\times r$ matrix is
%$$
%M_{p,q,r}:=\sum_{i=1}^p\sum_{j=1}^q\sum_{k=1}^r e_{i,j}\otimes e_{j,k}\otimes e_{k,i}.
%$$
%So this tensor is orthogonal. 
%\end{example}
\begin{proof}[Proof of Theorem~\ref{theo:Mpqr}]
This follows immediately from the definition of the diagonal singular value decomposition and Proposition~\ref{prop:eijk}.
\end{proof}
\begin{proof}[Proof of Theorem~\ref{theo:groupalgebra}]
Let $Z_1,\dots,Z_s$ be the irreducible representations of $G$. We have an isomorphism
\begin{equation}\label{eq:groupalgebraiso}
\C G\cong \bigoplus_{i=1}^s \Hom(Z_i,Z_i).
\end{equation}
For $A=\sum_{g\in G}\lambda_g g\in \C G$
we define $A^\star=\sum_{g\in G}\overline{\lambda_g}g^{-1}$.
We may view $A$ as an endomorphism of $\C G$ by left multiplication.
The Hermitian form on $\C G$ is given by
$$
\langle A,B\rangle={\textstyle \frac{1}{n}}\trace(AB^\star).
$$
We can write
$$
e=\sum_{i=1}^s \pi_i
$$
where $\pi_i$ is the projection onto $Z_i$.
The decomposition (\ref{eq:groupalgebraiso}) is orthogonal. 
The multiplication tensor
$$
T_G\in \C G\otimes \C G\otimes \C G
$$
decomposes
$$
T_G=\sum_{i=1}^s T_i
$$
where 
$$T_i\in \Hom(Z_i,Z_i)\otimes \Hom(Z_i,Z_i)\otimes \Hom(Z_i,Z_i)\subseteq \C G\otimes \C G \otimes \C G.
$$
is the tensor for multiplication in $\Hom(Z_i,Z_i)$.
We have
$$
T_i=\sum_{g,h} \pi_i g\otimes h \otimes h^{-1}g^{-1}.
$$
So it follows that
$$
\|T_i\|^2=\frac{1}{n}\sum_{g,h}\trace((\pi_i g)^\star (\pi_i g))=
\frac{1}{n}\sum_{g,h} \trace( \pi_i^\star g^{-1} g \pi_i)=\frac{1}{n}\sum_{g,h}\trace(\pi_i^2)=n\trace(\pi_i)=nd_i^2.
$$
Note that ${\bf T}=(T_1,T_2,\dots,T_s)$ is $3$-orthogonal. The tensor $T_i$ corresponds to matrix multiplication in $\Hom(Z_i,Z_i)$. We can write
$$
T_i=\sum_{j=1}^{d_i^3}\lambda_j a_j\otimes b_j\otimes c_j
$$
where $(a_j\otimes b_j\otimes c_j,1\leq j\leq d_i^3)$ is $2$-orthogonal list of unit vectors. The norm on $\Hom(Z_i,Z_i)$ that
is induced from the norm on $\C G$ may not be the same as the Euclidean norm given by
$A\mapsto\trace(A^\star A)$, but they are the same up to a scalar. This implies that all $\lambda_j$'s are the same. Since $nd_i^2=\|T_i\|^2=d_i^3\lambda_j^2$ we have that  $\lambda_j=\sqrt{\frac{n}{d_i}}$ for all $j$.
So the irreducible representation $Z_i$ contributes the singular value $\sqrt{\frac{n}{d_i}}$ with multiplicity $d_i^3$.

\end{proof}

\begin{proposition}\label{prop:orthogonalbound}
Suppose that  ${\bf V}=(V,(V^{(1)},\dots,V^{(d)}))$ is a tensor product space, $t\geq 1$ and  ${\bf v}=(v_1,\dots,v_r)\in V^r$ is a $t$-orthogonal $r$-tuple of pure tensors of unit length. 
Then we have  $n\leq \dim(V)^{1/t}$.
\end{proposition}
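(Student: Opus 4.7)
My plan is to combine the $t$-orthogonality hypothesis with an averaging (integration) argument, amplified by iterating the vertical tensor product. For each integer $k\ge 1$, I would apply Proposition~\ref{prop:verticaltensor} iteratively to form the vertical power ${\bf v}^{\boxtimes k}$, which is an $r^{k}$-tuple of unit pure tensors in ${\bf V}^{\boxtimes k}$ (whose $e$-th factor is $(V^{(e)})^{\otimes k}$, of dimension $n_e^{k}$ where $n_e:=\dim V^{(e)}$). The same proposition gives $[{\bf v}^{\boxtimes k}]_{2/t}=[{\bf v}]_{2/t}^{k}=1$, so this larger tuple is still $t$-orthogonal.

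Next, I would average the $t$-orthogonality inequality
\[
\sum_{i_1,\dots,i_k=1}^{r}\bigl|\bigl\langle v_{i_1}\boxtimes\cdots\boxtimes v_{i_k},\;u\bigr\rangle\bigr|^{2/t}\le 1
\]
over a random unit pure tensor $u=u^{(1)}\otimes\cdots\otimes u^{(d)}\in{\bf V}^{\boxtimes k}$, where each $u^{(e)}$ is drawn independently from the uniform probability measure on the unit sphere of $(V^{(e)})^{\otimes k}$. Independence across the factors together with unitary invariance collapses $\EE\bigl|\bigl\langle v_{i_1}\boxtimes\cdots\boxtimes v_{i_k},u\bigr\rangle\bigr|^{2/t}$ to the index-independent quantity $\prod_{e=1}^{d}c(n_e^{k},t)$, where $c(N,t):=\EE[|w_1|^{2/t}]$ for a uniformly random unit $w\in\C^{N}$. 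Since $|w_1|^{2}$ has the $\mathrm{Beta}(1,N-1)$ distribution, a direct computation gives $c(N,t)=\Gamma(1+1/t)\,\Gamma(N)/\Gamma(N+1/t)$, and summing over the $r^{k}$ tuples yields $r^{k}\prod_{e=1}^{d}c(n_e^{k},t)\le 1$.

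Finally, Stirling's formula gives $\Gamma(N+1/t)/\Gamma(N)=N^{1/t}\bigl(1+O(1/N)\bigr)$, so $c(N,t)^{-1}\sim N^{1/t}/\Gamma(1+1/t)$ as $N\to\infty$. Substituting yields
\[
r^{k}\le\frac{\dim(V)^{k/t}}{\Gamma(1+1/t)^{d}}\bigl(1+o(1)\bigr)\qquad\text{as }k\to\infty,
\]
and taking the $k$-th root produces $r\le\Gamma(1+1/t)^{-d/k}\dim(V)^{1/t}(1+o(1))^{1/k}$. Letting $k\to\infty$ collapses the multiplicative correction to $1$ and delivers the desired bound $r\le\dim(V)^{1/t}$ (reading the ``$n$'' in the statement as $r$, since no other $n$ is in scope). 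The main technical obstacle is the asymptotic estimate for $c(N,t)$ via Stirling to rule out the Gamma-function correction in the limit; everything else (Fubini, unitary invariance of the sphere measure, and the fact that $t$-orthogonality survives vertical tensor powers by Proposition~\ref{prop:verticaltensor}) is routine. Note also that the direct ($k=1$) averaging only yields $r\le\prod_e c(n_e,t)^{-1}$, which strictly exceeds $\dim(V)^{1/t}$ by a factor of $\Gamma(1+1/t)^{-d}$; passing to the limit $k\to\infty$ is what sharpens this to the stated bound.
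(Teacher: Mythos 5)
Your argument is correct and follows essentially the same route as the paper: average the $t$-orthogonality inequality over a random unit pure tensor to get $r\leq C_t^{\,d}\dim(V)^{1/t}$, then kill the constant by passing to the vertical tensor powers ${\bf v}^{\boxtimes k}$ (which stay $t$-orthogonal by Proposition~\ref{prop:verticaltensor}) and letting $k\to\infty$. The only divergence is a technical detail: you compute the single-factor expectation exactly via the $\mathrm{Beta}(1,N-1)$ law of $|w_1|^2$, while the paper settles for the cruder volume-comparison bound $\EE(|w_1|^{2/t})\geq 2^{-1-1/t}N^{-1/t}$; either suffices since the resulting constant is eliminated in the limit.
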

\begin{proof}
Consider the $(2m-1)$-dimensional unit sphere in $\C^{2m}$ given by
$$
|z_1|^2+\cdots+|z_m|^2=1.
$$
Suppose $z$ is a random point on the sphere (with uniform distribution). We will give an estimate for the expectation $\EE(|z|^\alpha)$.
It is clear that $\EE(|z|^\alpha)\geq \EE(|w|^\alpha)$ where $w$ is a random point in the $(2m)$-dimensional ball $B_{2m}$ defined by
$$
|w_1|^2+\cdots+|z_m|^2\leq 1
$$
Let $x=1/\sqrt{2m}$, and let $D$ be the body defined by 
$$|w_1|\leq x\mbox{ and } |w_1|^2+\cdots+|w_m|^2\leq 1
$$
and $E$ be the body defined by
$$
|w_1|\leq x\mbox{ and } |w_2|^2+\cdots+|w_m|^2\leq 1.
$$
Then $D\subseteq E$ and  $E$ is a product of an $(2m-2)$-dimensional ball with radius $1$ and a disk of radius $x$.
We have
$$
\PP(|w|\leq x)=\frac{\vol(D)}{\vol(B_{2m})}\leq \frac{\vol(E)}{\vol(B_{2m})}=\frac{\vol(B_{2m-2})\pi x^2}{\vol(B_{2m})}=\frac{x^2}{m}=\frac{1}{2}
$$
where we use the formula $\vol(B_{2m})=\pi^m/m!$.
It follows that 
$$
\EE(|z|^\alpha)\geq \EE(|w|^{\alpha})\geq x^\alpha\PP(|w|\geq x)\geq\textstyle\frac{1}{2}x^{\alpha}=2^{-1-\alpha/2}m^{-\alpha/2}.
$$
Suppose that $u=u^{(1)}\otimes \cdots\otimes u^{(d)}$ is a fixed unit pure tensor in $V$ and $z=z^{(1)}\otimes \cdots \otimes z^{(d)}$ is a random unit pure tensor.
Let $n=\dim(V)$ and $n_i=\dim(V_i)$ for all $i$.
Then we have
$$
\EE(|\langle z, u\rangle|^{2/t})=\prod_{s=1}^d \EE(|\langle z^{(i)},u^{(i)}\rangle|^{2/t})\geq 2^{-(1+1/t)d}\prod_{i=1}^d n_i^{-1/t}=2^{-(1+1/t)d}n^{-1/t}.
$$
It follows that
$$
1\geq \sum_{i=1}^r \EE(|\langle v_i,z\rangle|^{2/t})=2^{-(1+1/t)d}n^{-1/t}r.
$$
So we get
$$
r\leq 2^{(1+1/t)d}n^{1/t}.
$$
For a positive integer $q$, 
$$v^{\boxtimes q}=\underbrace{v\boxtimes \cdots\boxtimes v}_q$$ is also $t$-orthogonal, and it has $r^q$ vectors in an $n^q$-dimensional vector space.
So we have
$$
r^q\leq 2^{(1+1/t)d} n^{q/t}.
$$
Taking the $q$-th root gives
$$
r\leq 2^{(1+1/t)d/q} n^{1/t}.
$$
Taking the limit $q\to\infty$ yields
$$
r\leq n^{1/t}.
$$
\end{proof}
The following lemma justifies the term $t$-orthogonality.
\begin{lemma}
If  $(v,w)$ is $t$-orthogonal, where $v=v^{(1)}\otimes \cdots \otimes v^{(d)}$ and $w=w^{(1)}\otimes \cdots\otimes w^{(d)}$, then we have $\langle v^{(i)},w^{(i)}\rangle=0$ for at least $t$ values of $i$.
\end{lemma}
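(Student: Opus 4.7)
The plan is to prove the contrapositive by an explicit construction. Set $c_i := \langle v^{(i)}, w^{(i)}\rangle$, let $S := \{i \mid c_i = 0\}$ and $k := |S|$, and assume for contradiction that $k < t$. I will exhibit a unit pure tensor $u = u^{(1)} \otimes \cdots \otimes u^{(d)}$ for which $|\langle v, u\rangle|^{2/t} + |\langle w, u\rangle|^{2/t} > 1$, contradicting the assumption $[{\bf v}]_{2/t} = 1$.

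For each $i \notin S$, set $u^{(i)} := v^{(i)}$, so this factor contributes $1$ to $|\langle v, u\rangle|$ and $|c_i|$ to $|\langle w, u\rangle|$. For each $i \in S$, the vectors $v^{(i)}$ and $w^{(i)}$ form an orthonormal pair, so $u^{(i)} := \cos\theta\, v^{(i)} + \sin\theta\, w^{(i)}$ is a unit vector for any $\theta \in [0, \pi/2]$, contributing $\cos\theta$ and $\sin\theta$ to the respective inner products. Writing $C := \prod_{i \notin S} |c_i|$, which is strictly positive by the definition of $S$, this produces
$$|\langle v, u\rangle|^{2/t} + |\langle w, u\rangle|^{2/t} = (\cos\theta)^{2k/t} + C^{2/t}(\sin\theta)^{2k/t} =: g(\theta),$$
and it suffices to show some $\theta$ makes $g(\theta) > 1$.

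A short calculus computation yields $g'(\theta) = 0$ at the interior point where $(\tan\theta)^{t-k} = C$. Substituting $y := C^{2/(t-k)}$ so that $\sin^2\theta = y/(1+y)$ and $\cos^2\theta = 1/(1+y)$, and using the identity $C^{2/t}\cdot y^{k/t} = y$ (which follows from $2/t + 2k/(t(t-k)) = 2/(t-k)$), the critical value collapses to
$$g_{\max} = (1+y)^{-k/t}\bigl(1 + C^{2/t}y^{k/t}\bigr) = (1+y)^{(t-k)/t}.$$
Since $k < t$ implies $(t-k)/t > 0$ and $C > 0$ implies $y > 0$, we have $g_{\max} > 1$; the endpoints $g(0) = 1$ and $g(\pi/2) = C^{2/t} \leq 1$ confirm this is a genuine maximum and not an extraneous critical point. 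This is the required contradiction, so $k \geq t$.

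The main obstacle is simply bookkeeping the exponents in the simplification of $g_{\max}$: the telescoping of $C^{2/t}\cdot(C^{2/(t-k)})^{k/t}$ down to $C^{2/(t-k)}$ is what makes the closed form come out cleanly, and it is precisely the choice $u^{(i)} = v^{(i)}$ for $i \notin S$ that sets up this cancellation. One need not optimize over the $i \notin S$ factors, since exhibiting a single $u$ that violates the inequality is enough.
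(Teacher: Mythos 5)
Your proof is correct, and it rests on the same underlying idea as the paper's: test $t$-orthogonality against a unit pure tensor $u=u^{(1)}\otimes\cdots\otimes u^{(d)}$ whose factors lie in the spans of the pairs $(v^{(i)},w^{(i)})$ and are tilted from $v^{(i)}$ toward $w^{(i)}$. The difference is in execution. The paper perturbs \emph{every} factor by a small parameter $\varepsilon$ and compares the second-order loss in $|\langle v,u\rangle|^{2/t}$ against the gain $C^{2/t}\varepsilon^{2s/t}$ in $|\langle w,u\rangle|^{2/t}$, reading off $s\geq t$ from an asymptotic expansion with $o(\cdot)$ error terms. You tilt only in the $k$ coordinates where the factors are orthogonal, which turns the problem into an exact one-variable optimization with closed-form value $(1+C^{2/(t-k)})^{(t-k)/t}>1$; your exponent bookkeeping checks out ($(\tan\theta)^{t-k}=C$ at the critical point and $C^{2/t}y^{k/t}=y$). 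This buys you a self-contained, exact contradiction with no little-$o$ analysis, and, as you observe, you do not even need the critical point to be a genuine maximum---any single $\theta$ with $g(\theta)>1$ already contradicts $[(v,w)]_{2/t}=1$. Two minor points worth making explicit: you are tacitly normalizing each $v^{(i)}$ and $w^{(i)}$ to be a unit vector (harmless, since $\|v\|=\|w\|=1$ lets you absorb the scalars into one factor), and the degenerate case $k=0$ should be dispatched separately by taking $u=v$, since then there is no angle to optimize (the conclusion $1+C^{2/t}>1$ is the same).
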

\begin{proof}
Suppose that $u=u^{(1)}\otimes \cdots\otimes u^{(d)}$ is a unit pure tensor. Then we have
$$
1\geq \prod_i |\langle v^{(i)}, u^{(i)}\rangle|^{2/t}+\prod_i|\langle w^{(i)},u^{(i)}\rangle|^{2/t}.
$$
Choose $\varepsilon$ with $0<\varepsilon<\sqrt{2}$ and $u^{(i)}$  such that $|\langle v^{(i)},u^{(i)}\rangle| =1-\frac{1}{2}\varepsilon^2$ and $u^{(i)},v^{(i)},w^{(i)}$ are dependent.
If $w^{(i)}$ and $v^{(i)}$ are orthogonal, then $|\langle w^{(i)},u^{(i)}\rangle|=\varepsilon+o(\varepsilon)$,
because $|\langle v^{(i)},u^{(i)}\rangle|^2+|\langle w^{(i)},u^{(i)}\rangle|^2=1$. If $w^{(i)}$ and $v^{(i)}$ are not orthogonal,
then $|\langle w^{(i)},u^{(i)}\rangle|=|\langle w^{(i)},v^{(i)}\rangle|+o(\varepsilon)$.
If $s$ is the number of $i$ for which $v^{(i)}$ and $w^{(i)}$ are orthogonal, then we have
\begin{multline*}
1\geq  \prod_i |\langle v^{(i)}, u^{(i)}\rangle|^{2/t}+\prod_i|\langle w^{(i)},u^{(i)}\rangle|^{2/t}=(1-{\textstyle \frac{1}{2}}\varepsilon^2)^{2/t}+|C\varepsilon^{s}+o(\varepsilon^s)|^{2/t}=\\=
 1-{\textstyle \frac{d}{t}}\varepsilon^2+o(\varepsilon^2)+C^{2/t}\varepsilon^{2s/t}+o(\varepsilon^{2s/t}).
 \end{multline*}
for some constant $C$. We must have $s/t\geq 1$, otherwise the inequality is not satisfied for small $\varepsilon$.
\end{proof}

\begin{example}
Consider the following triple of pure tensors
$$
{\bf e}=(e_1\otimes e_1\otimes e_1,e_1\otimes e_2\otimes\otimes e_2,e_2\otimes e_1\otimes e_2)\in (\C^2\otimes \C^2\otimes \C^2)^3
$$
Then every pair of vectors of ${\bf e}$ is $2$-orthogonal. However, ${\bf e}$ itself is not $2$-orthogonal, because
it violates Proposition~\ref{prop:orthogonalbound}:
$$
n=3>8^{1/2}=\dim(V)^{1/2}.
$$
\end{example}
\begin{example}
Let $G$ be a group of order $n$, $\C G$ be its group algebra and consider the tensor product space $\C G\otimes \C G\otimes \C G$.
Let
$$
{\bf v}=(g\otimes h\otimes k\mid g,h,k\in G;\ ghk=1).
$$
be a list of $n^2$ vectors. We claim that  ${\bf v}$ is $\frac{3}{2}$-orthogonal. Suppose that
$$
w=\Big(\sum_{g\in G}a_g g\Big)\otimes \Big(\sum_{g\in G} b_g g\Big)\otimes \Big(\sum_{g\in G} c_g g\Big) 
$$
is a pure tensor with $\sum_{g\in G}|a_g|^2=\sum_{g\in G}|b_g|^2=\sum_{g\in G}|c_g|^2=1$.
Using the inequality $pqr\leq \frac{1}{3}(p^3+q^3+r^3)$ we get
\begin{multline*}
\sum_{g,h,k\in G\atop gkh=1} |\langle g\otimes h\otimes k,w\rangle|^{4/3}=\sum_{g,h,k\in G\atop ghk=1} |a_gb_hc_k|^{4/3}=
\sum_{g,h,k\in G\atop ghk=1} |a_gb_h|^{2/3} |b_hc_k|^{2/3}|c_ka_g|^{2/3}\leq\\\leq 
\frac{1}{3}\sum_{g,h,k\in G\atop ghk=1}  (|a_gb_h|^2+|b_hc_k|^2+|c_ka_g|^2)=1.
\end{multline*}
To see the last equality, note that
$$
\sum_{g,h,k\in G\atop ghk=1}|a_gb_h|^2=\sum_{g\in G}|a_g|^2\sum_{h\in G}|b_h|^2=1\cdot 1=1.
$$
This proves that ${\bf v}$ is $\frac{3}{2}$-orthogonal. For $t>\frac{3}{2}$, ${\bf v}$ cannot be $t$-orthogonal
because otherwise this would violate Proposition~\ref{prop:orthogonalbound}.
\end{example}

\section{Lower bounds for the nuclear norm}\label{sec5}
\begin{proof}[Proof of Theorem~\ref{theo:lowbound}]
Suppose that $\alpha\geq 1$, $T$ is a tensor, and ${\bf S}=(S_1,\dots,S_r)$ is an $r$-tuple of tensors.
We can write
$$
T=\sum_{i=1}^s \mu_j w_j
$$
where $\mu_1,\dots,\mu_s$ are positive real numbers such that  $\sum_{j=1}^s \mu_j=\|T\|_\star$ and $w_1,\dots,w_s$ are pure unit tensors.
Define
$$
z_j:=\begin{pmatrix}
|\langle w_j,S_1\rangle|\\
\vdots\\
|\langle w_j,S_r\rangle|
\end{pmatrix}\in \C^r
$$
for $j=1,2,\dots,s$. We have 
$$\|z_j\|_\alpha=\Big(\sum_{i=1}^r | \langle w_j,S_i\rangle|^\alpha\Big)^{1/\alpha}\leq [{\bf S}]_\alpha.
$$
for all $j$.
It follows that
\begin{multline*}
\Big(\sum_{i=1}^r |\langle T,S_i\rangle|^\alpha\Big)^{1/\alpha}\leq 
\Big(\sum_{i=1}^r \Big(\sum_{j=1}^s \mu_j |\langle w_j,S_i\rangle|\Big)^\alpha\Big)^{1/\alpha}=\Big\|\sum_{j=1}^s \mu_j z_j\Big\|_{\alpha}\leq\\ \leq \sum_{j=1}^s |\mu_j|\|z_j\|_\alpha\leq\sum_{j=1}^s |\mu_j|[{\bf S}]_\alpha=\|T\|_\star [{\bf S}]_\alpha.
\end{multline*}
\end{proof}
%\begin{example}
%Consider again the tensor 
%$$
%M_{p,q,r}=\sum_{i=1}^p\sum_{j=1}^q\sum_{k=1}^r e_{i,j}\otimes e_{j,k}\otimes e_{k,i}.
%$$
%Suppose that 
%$$
%u=A\otimes B\otimes C
%$$
%is a pure tensor of unit length with $\|A\|=\|B\|=\|C\|$. We can write
%$$
%A=\sum_{i,j}a_{i,j}e_{i,j},\quad B=\sum_{j,k} b_{j,k}e_{j,k},\quad C=\sum_{k,i} c_{k,i}e_{k,i}.
%$$
%We have
%$$
%|\langle M_{p,q,r}, u\rangle|=\Big|\sum_{i,j,k} a_{i,j}b_{j,k}c_{k,i}\Big|=|\trace(ABC)|=|\langle AB,C^\star\rangle|\leq \|AB\|_\star [C]
%$$
%For some unitary matrices $U_1$ and $U_2$, the diagonal matrix $U_1ABU_2$ with the singular values on the diagonal. So Cauchy-Schwarz implies 
%$$
%\|AB\|_\star=|\trace(U_1ABU_2)|\leq \|U_1A\|\|BU_2\|=\|A\|\|B\|\leq 1
%$$
%and 
%$$
%[C] \leq \|C\|\leq 1.
%$$
%It follows that $|\langle M_{p,q,r},u\rangle|\leq 1$. Since the tensor $u$ was arbitrary, we have $[M_{p,q,r}]\leq 1$.
%So we have
%$$
%pqr=\|M_{p,q,r}\|^2\leq [M_{p,q,r}]\|M_{p,q,r}\|_\star\leq \|M_{p,q,r}\|_\star
%$$
%\end{example}

For a permutation $\sigma\in \SSS_n$, define 
$$e_\sigma=e_{\sigma(1)}\otimes e_{\sigma(2)}\otimes\cdots\otimes e_{\sigma(n)}\in (\C^n)^{\otimes n}
$$
and 
$$
{\bf e}=(e_\sigma,\sigma\in \SSS_n).
$$
We now study the determinant tensor $\sum_\sigma \sgn(\sigma) e_\sigma$ and the permanent tensor $\sum_\sigma e_\sigma$.
\begin{proof}[Proof of Theorem~\ref{theo:det}]
If $a^{(1)},\cdots,a^{(n)}$ are vectors of unit length, then Hadamard's inequality yields
$$
|\langle {\textstyle \det_n},a^{(1)}\otimes a^{(2)}\otimes \cdots\otimes a^{(n)}\rangle|=|\det(a^{(1)},\dots,a^{(n)})|\leq \|a^{(1)}\|\cdots \|a^{(n)}\|=1.
$$
Therefore, we have $[{\textstyle \det_n}]\leq 1$.
It follows from Corollary~\ref{cor:spectralbound} that
$$
\|{\textstyle \det_n}\|_\star\geq \|{\textstyle \det_n}\|_\star [{\textstyle \det_n}]\geq \|{\textstyle \det_n}\|^2=n!\ .
$$
\end{proof}
The following theorem proven in~\cite{CEL} is the permanent analog of Hadamard's inequality.
\begin{theorem}\label{theo:CEL}
For vectors $a^{(1)},\dots,a^{(n)}\in \C^n$ we have
$$
|\per(A)|\leq \frac{n!}{n^{n/2}}\|a^{(1)}\|\|a^{(2)}\|\cdots\|a^{(n)}\|.
$$
\end{theorem}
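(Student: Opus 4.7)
The plan is to write $\per(A)$ as an inner product with a symmetric tensor, reduce to a diagonal pure tensor via Banach's theorem, and finish with AM--GM.

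First, by the triangle inequality,
\[
|\per(A)|=\Big|\sum_{\sigma\in\SSS_n}\prod_{i=1}^n a_{\sigma(i)}^{(i)}\Big|\leq \sum_{\sigma\in\SSS_n}\prod_{i=1}^n|a_{\sigma(i)}^{(i)}|,
\]
the right side being the permanent of the entrywise-modulus matrix of $A$, whose row norms equal those of $A$. So I may assume each $a^{(i)}\in\R^n$ has nonnegative entries, and by homogeneity rescale to $\|a^{(i)}\|=1$. Introduce the symmetrization tensor
\[
\Sigma=\frac{1}{n!}\sum_{\sigma\in\SSS_n}e_{\sigma(1)}\otimes e_{\sigma(2)}\otimes\cdots\otimes e_{\sigma(n)}\in(\R^n)^{\otimes n}.
\]
A direct computation gives $\langle \Sigma,\,a^{(1)}\otimes\cdots\otimes a^{(n)}\rangle=\per(A)/n!$, so the theorem is equivalent to showing $[\Sigma]\leq n^{-n/2}$, where $[\,\cdot\,]$ is the spectral norm defined in the introduction.

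The tensor $\Sigma$ is symmetric under permutation of its tensor factors. I would invoke Banach's classical theorem (1938) for real symmetric $d$-tensors, which asserts that the spectral norm is attained on a diagonal pure tensor: there exists a unit vector $v\in\R^n$ with
\[
[\Sigma]=\sup_{\|v\|=1}|\langle \Sigma,\,v^{\otimes n}\rangle|=\sup_{\|v\|=1}|v_1v_2\cdots v_n|.
\]
By AM--GM applied to $v_1^2,\dots,v_n^2$ with $\sum v_i^2=1$, the last supremum equals $n^{-n/2}$, attained at $v=(1/\sqrt n,\dots,1/\sqrt n)$. Combining, $|\per(A)|=n!\,|\langle \Sigma,a^{(1)}\otimes\cdots\otimes a^{(n)}\rangle|\leq n!/n^{n/2}$, and the bound extends to vectors of arbitrary length by homogeneity.

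The main obstacle is the reduction from a general unit pure tensor to the diagonal $v^{\otimes n}$, i.e.\ the invocation of Banach's theorem. Over $\R$ this is classical, but a self-contained argument tailored to $\Sigma$ could proceed via Lagrange multipliers on the product of unit spheres: any critical configuration satisfies $\per(A_{i,j})=\per(A)\,a_j^{(i)}$, where $A_{i,j}$ denotes the $(n-1)\times(n-1)$ minor obtained by deleting row $i$ and column $j$. Exploiting the $\SSS_n$-symmetry of the problem together with an inductive hypothesis on cofactor permanents, one then argues that the global maximum forces $a^{(1)}=\cdots=a^{(n)}$, reducing the problem to the AM--GM estimate above.
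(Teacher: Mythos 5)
Your argument is correct, and it takes a genuinely different route from the paper, which in fact gives no proof of this inequality at all: it simply cites Carlen, Lieb and Loss \cite{CEL}, whose proof proceeds by a different, inductive/analytic argument with a careful treatment of the equality cases. Your route --- reduce to nonnegative real unit vectors via $|\per(A)|\leq\per(|A|)$, identify $\per(A)/n!$ with $\langle\Sigma,a^{(1)}\otimes\cdots\otimes a^{(n)}\rangle$ for the symmetrizer $\Sigma$, and bound the spectral norm $[\Sigma]$ --- rests on Banach's 1938 theorem that the norm of a real symmetric multilinear form on a Hilbert space is attained on a diagonal tuple, i.e.\ on some $v^{\otimes n}$; combined with AM--GM this gives $[\Sigma]=n^{-n/2}$ and the claim follows, with the sharpness visible at $v=(1/\sqrt n,\dots,1/\sqrt n)$. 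This is essentially the modern approach of Friedland and Lim to spectral and nuclear norms of symmetric tensors; it buys brevity and a transparent identification of the extremal configuration, at the cost of importing Banach's theorem, which is nontrivial but classical and correctly applicable here since $\Sigma$ is symmetric and the paper's spectral norm is exactly the supremum over unit pure tensors. The one point to repair is your closing sketch of a self-contained substitute for Banach's theorem: the global maximum does \emph{not} force $a^{(1)}=\cdots=a^{(n)}$ --- already for $n=2$ the configuration $a^{(1)}=e_1$, $a^{(2)}=e_2$ attains the extremal value $2!/2=1$ without being diagonal --- so the correct goal is only that \emph{some} maximizer is diagonal, which is precisely what Banach's theorem supplies. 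As written, the main proof stands provided you cite Banach's theorem explicitly; the Lagrange-multiplier paragraph should be dropped or reworked accordingly.
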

\begin{proof}[Proof of Theorem~\ref{theo:perm}]
For vectors $a^{(1)},\cdots,a^{(n)}$ of unit length,we get
$$
|\langle {\textstyle \per_n},a^{(1)}\otimes a^{(2)}\otimes \cdots\otimes a^{(n)}\rangle|=|\per(a^{(1)},\dots,a^{(n)}|\leq \frac{n!}{n^{n/2}}\|a^{(1)}\|\cdots \|a^{(n)}\|=\frac{n!}{n^{n/2}}.
$$
So we have
$[{\textstyle \per_n}]\leq \frac{n!}{n^{n/2}}.$
From Corollary~\ref{cor:spectralbound} follows that
$$
\frac{n!}{n^{n/2}}\|{\textstyle \per_n}\|_\star\geq \|{\textstyle \per_n}\|_\star [{\textstyle \per_n}]\geq \|{\textstyle \per_n}\|^2=n!.
$$
We conclude  that
$\|\per_n\|_\star\geq n^{n/2}$.
\end{proof}

%\begin{lemma}\label{lem:per}
%Suppose $\alpha\geq 1$ and 
%$$
%|\per(a^{(1)},a^{(2)},\dots,a^{(n)})|\leq C \|a^{(1)}\|_\alpha\cdots \|a^{(n)}\|_\alpha
%$$
%for all vectors $a^{(1)},\dots,a^{(n)}$.
%Then we have
%$$
[%{\bf e}]_{2/\alpha}\leq C^{\alpha/2}.

\section{The Diagonal Singular Value Decomposition}\label{sec6}
For an $r$-tuple ${\bf v}=(v_1,\dots,v_r)$ and $k<r$ we write ${\bf v}^{[k]}$ for $(v_1,\dots,v_k)$.
We start with the most general, main theorem.
\begin{theorem}\label{theo:main}
Suppose that $V$ is a tensor product space, ${\bf v}=(v_1,\dots,v_r)$ and ${\bf w}=(w_1,\dots,w_s)$
consists of pure tensors in $V$ of unit length, 
 $\lambda_1\geq \lambda_2\geq \cdots\geq\lambda_s>0$, 
 $\sigma_1\geq \sigma_2\geq \cdots \geq \sigma_r>0$
 and
$$
\sum_{i=1}^s\lambda_i w_i=\sum_{j=1}^r\sigma_jv_j.
$$
Also, suppose that $k\leq s$, $l\leq r$ such that $0\leq \delta\leq [{\bf w}^{[k]}]_1$ where $\delta:=k[{\bf v}]_1-l[{\bf w}^{[k]}]_1$. Then we have
$$
[{\bf w}^{[k]}]_1(\sigma_1+\sigma_2+\cdots+\sigma_l)+\delta \sigma_{l+1}\geq (1-\mu_1({\bf w}))(\lambda_1+\lambda_2+\cdots+\lambda_k),
$$
Here we use the conventions that  $0=\lambda_{s+1}=\lambda_{s+2}=\cdots$ and $0=\sigma_{r+1}=\sigma_{r+2}=\cdots$.
\end{theorem}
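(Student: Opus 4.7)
The plan is to sandwich the quantity
\[ I := \sum_{i=1}^{k} |\langle T, w_i\rangle| \]
between an upper bound expressed in the $\sigma_j$'s and a lower bound expressed in the $\lambda_i$'s; the theorem follows by transitivity.

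For the upper bound I would expand $T = \sum_{j=1}^r \sigma_j v_j$ and use the triangle inequality to get
\[ I \;\leq\; \sum_{j=1}^r \sigma_j \tau_j, \qquad \tau_j := \sum_{i=1}^k |\langle v_j, w_i\rangle|. \]
Because each $v_j$ is a pure unit tensor, the definition of $[-]_1$ gives $\tau_j \leq [{\bf w}^{[k]}]_1$; and because each $w_i$ is a pure unit tensor, swapping the order of summation shows $\sum_j \tau_j \leq k[{\bf v}]_1$. Since the $\sigma_j$'s are non-increasing, $\sum_j \sigma_j \tau_j$ is maximized under these constraints by front-loading the mass, and the hypothesis $0\leq \delta \leq [{\bf w}^{[k]}]_1$ is precisely what makes the assignment $\tau_1=\cdots=\tau_l=[{\bf w}^{[k]}]_1$, $\tau_{l+1}=\delta$, $\tau_{l+2}=\cdots=0$ feasible. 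This yields
\[ I \;\leq\; [{\bf w}^{[k]}]_1\,(\sigma_1+\cdots+\sigma_l) + \delta\,\sigma_{l+1}. \]

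For the lower bound I would expand $T = \sum_{m=1}^s \lambda_m w_m$ and use $\langle w_i,w_i\rangle=1$ together with the reverse triangle inequality to obtain $|\langle T,w_i\rangle|\geq \lambda_i - \sum_{m\neq i}\lambda_m|\langle w_m,w_i\rangle|$. Summing over $i\leq k$, everything reduces to showing
\[ \sum_{m=1}^s \lambda_m a_m \;\leq\; \mu_1({\bf w})\sum_{i=1}^k \lambda_i, \qquad a_m := \sum_{i\leq k,\,i\neq m} |\langle w_m,w_i\rangle|. \]
The naive estimate $a_m\leq \mu_1({\bf w})$ applied uniformly only delivers $\mu_1({\bf w})\sum_{m=1}^s \lambda_m$, which overshoots by the tail $\lambda_{k+1}+\cdots+\lambda_s$; the decreasing hypothesis on $(\lambda_i)$ must be used. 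To that end I would invoke Abel summation, $\sum_m \lambda_m a_m = \sum_{j=1}^s (\lambda_j-\lambda_{j+1})A_j$ with $A_j = \sum_{m\leq j}a_m$, and interchange the summation defining $A_j$ in two different ways: grouping by $i\leq k$ shows $A_j\leq k\,\mu_1({\bf w})$, grouping by $m\leq j$ shows $A_j\leq j\,\mu_1({\bf w})$, hence $A_j\leq \mu_1({\bf w})\min(j,k)$. Plugging this in and using the identity $\sum_{j=1}^s(\lambda_j-\lambda_{j+1})\min(j,k)=\sum_{i=1}^k\lambda_i$ closes the argument and gives $I\geq (1-\mu_1({\bf w}))(\lambda_1+\cdots+\lambda_k)$.

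The main obstacle is precisely this Abel-summation step. A proof that does not genuinely exploit the monotonicity of $(\lambda_i)$ cannot produce a coefficient better than $\mu_1({\bf w})\sum_{m=1}^s\lambda_m$, which is weaker than what the theorem asserts. Everything else—the two triangle-inequality expansions, the definitions of $\tau_j$ and $a_m$, and the linear-programming maximization against the constraints on $(\tau_j)$—is essentially mechanical once the correct quantities have been set up.
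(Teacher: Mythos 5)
Your proof is correct and follows essentially the same route as the paper: both arguments sandwich $\sum_{i=1}^k|\langle T,w_i\rangle|$ between the linear-programming upper bound in the $\sigma_j$'s (with the constraints $\tau_j\leq[{\bf w}^{[k]}]_1$ and $\sum_j\tau_j\leq k[{\bf v}]_1$) and the coherence lower bound $(1-\mu_1({\bf w}))\sum_{i=1}^k\lambda_i$. The only difference is how the monotonicity of $(\lambda_i)$ enters the lower bound: you use Abel summation with $A_j\leq\mu_1({\bf w})\min(j,k)$, whereas the paper swaps $\lambda_i$ and $\lambda_j$ in the double sum $\sum_{i\leq k}\sum_{j}\lambda_j|\langle w_i,w_j\rangle|$ via a symmetrization argument; both are valid and give the identical estimate.
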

\begin{proof}
Let 
$$
T=\sum_{i=1}^s\lambda_iw_i=\sum_{j=1}^r \sigma_j v_j.
$$
We have
\begin{multline*}
\sum_{i=1}^k\sum_{j=1}^s \lambda_i|\langle w_i, w_j\rangle|=
\sum_{i=1}^k\sum_{j=1}^k\lambda_i|\langle w_i, w_j\rangle |+\sum_{i=1}^k\sum_{j=k+1}^s\lambda_i|\langle w_i, w_j\rangle|=\\=
\sum_{i=1}^k\sum_{j=1}^k\lambda_j|\langle  w_i, w_j\rangle|+\sum_{i=1}^k\sum_{j=k+1}^s\lambda_i|\langle w_i,w_j\rangle|\geq\\\geq
 \sum_{i=1}^k\sum_{j=1}^k\lambda_j|\langle w_i, w_j\rangle |+\sum_{i=1}^k\sum_{j=k+1}^s\lambda_j|\langle w_i,w_j\rangle|=\sum_{i=1}^k\sum_{j=1}^s \lambda_j|\langle w_i, w_j\rangle|.
\end{multline*}
because $\lambda_i\geq \lambda_j$ whenever $i\geq j$.
Using this, we get
\begin{multline*}
\sum_{i=1}^k |\langle w_i, T\rangle |=\sum_{i=1}^k\Big|\sum_{j=1}^s\lambda_j\langle w_i, w_j\rangle \Big|\geq 
\sum_{i=1}^k \lambda_i-\sum_{i=1}^k\sum_{j\neq i}\lambda_j|\langle w_i, w_j\rangle|=\\=
2\sum_{i=1}^k\lambda_i-\sum_{i=1}^k\sum_{j=1}^s\lambda_j |\langle w_i, w_j\rangle|\geq
2\sum_{i=1}^k\lambda_i-\sum_{i=1}^k\sum_{j=1}^s \lambda_i |\langle w_i, w_j\rangle|=\\=
\sum_{i=1}^k\lambda_i\Big(2-\sum_{j=1}^s |\langle w_i,w_j\rangle |\Big)\geq 
(1-\mu_1({\bf w})) \sum_{i=1}^k \lambda_i.
\end{multline*}
%For every $i$ choose $\delta_i\in \C$ such that $|\delta_i|=1$ and $\delta_i\langle v_i,e\rangle$ is real and nonnegative. Let $z=\sum_{i}\delta_i v_i$.

Let $y_{i,j}=|\langle w_i,v_j\rangle|$ if $1\leq i\leq s$ and $1\leq j\leq r$.
We have
$$
(1-\mu_1({\bf w}))\sum_{i=1}^k\lambda_i\leq \sum_{i=1}^k |\langle w_i, T
\rangle|\leq \sum_{i=1}^k\sum_{j=1}^r \sigma_{j}y_{i,j}
=
 \sum_{j=1}^r\sigma_j\sum_{i=1}^k y_{i,j}=\sum_{j=1}^r \sigma_j x_j,$$
where $x_j=\sum_{i=1}^ky_{i,j}\leq [{\bf w}^{[k]}]_1$. We also have 
$$x_1+\cdots+x_r=\sum_{j=1}^r\sum_{i=1}^ky_{i,j}=\sum_{i=1}^k\sum_{j=1}^r y_{i,j}\leq \sum_{i=1}^k [{\bf v}]_1=k[{\bf v}]_1.
$$
If we maximalize the functional $\sum_{j=1}^r \sigma_jx_j$ under the constraints $0\leq x_i\leq [{\bf w}^{[k]}]_1$ for $i=1,2,\dots,l$
 and $x_1+\cdots+x_{r}\leq k[{\bf v}]_1$,
then an optimal solution is $x_1=x_2=\cdots=x_l=[{\bf w}^{[k]}]_1$, $x_{l+1}=k[{\bf v}]_1-l[{\bf w}^{[k]}]_1=\delta$ and $x_{l+2}=\cdots=x_r=0$, and the optimal value is
$$[{\bf w}^{[k]}]_1(\sigma_1+\cdots+\sigma_l)+\delta\sigma_{l+1}.
$$
\end{proof}
%\begin{corollary}
%If $\mu_1({\bf v})<1$, then $v_1,\dots,v_r$ are linearly independent.
%\end{corollary}
%We have $\{v\}_1\leq \mu(v)(r-1)$, so $\mu(v)<\frac{1}{r-1}$ implies that $v_1,\dots,v_r$ are independent. From this follows
%the well-known inequality
%$$
%\krank(v)=\spark(v)-1\geq \frac{1}{\mu(v)}.
%$$
The following result gives a lower bound for the nuclear norm: 
\begin{theorem}\label{theo:nuclearlowerbound}
If  ${\bf w}=(w_1,\dots,w_s)$ is an orthogonal $r$-tuple of pure tensors of unit length, $\lambda_1\geq \cdots\geq \lambda_s>0$ and
$T=\sum_{i=1}^s \lambda_i w_i$, then we have
$$
\|T\|_\star\geq\frac{\sum_{i=1}^k \lambda_i}{[{\bf w}^{[k]}]_1}
$$
\end{theorem}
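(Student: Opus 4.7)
The plan is to apply Theorem~\ref{theo:lowbound} directly to the truncated tuple ${\bf S} = {\bf w}^{[k]} = (w_1,\dots,w_k)$ with exponent $\alpha = 1$. Since each $w_i$ is a pure tensor of unit length, that theorem immediately delivers
$$
\sum_{i=1}^k |\langle T, w_i\rangle| \;\leq\; \|T\|_\star \cdot [{\bf w}^{[k]}]_1.
$$

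The second step is to evaluate the left-hand side explicitly. Writing $T = \sum_{j=1}^s \lambda_j w_j$ and using the pairwise orthogonality of ${\bf w}$ together with $\|w_i\| = 1$, one gets $\langle T, w_i\rangle = \lambda_i$ for every $1 \leq i \leq k$. Since the $\lambda_i$ are strictly positive, the absolute values may be dropped, so the sum on the left is exactly $\lambda_1 + \cdots + \lambda_k$. Dividing through by the positive quantity $[{\bf w}^{[k]}]_1$ yields the claimed lower bound for $\|T\|_\star$.

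I do not anticipate any real obstacle: the genuine work is already packaged inside Theorem~\ref{theo:lowbound}, whose proof performs the convex-combination argument that handles every pure-tensor decomposition of $T$ simultaneously. One could alternatively read the same inequality off from the proof of Theorem~\ref{theo:main} by specializing to the orthogonal case $\mu_1({\bf w}) = 0$ and applying the crude estimate $x_j \leq [{\bf w}^{[k]}]_1$ to the final optimization (so the right-hand side collapses to $[{\bf w}^{[k]}]_1 \sum_j \sigma_j$ before infimizing over decompositions), but the direct route via Theorem~\ref{theo:lowbound} is considerably shorter and is the one I would write up.
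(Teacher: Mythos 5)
Your proof is correct, but it takes a genuinely different route from the paper's. You invoke Theorem~\ref{theo:lowbound} with ${\bf S}={\bf w}^{[k]}$ and $\alpha=1$, then use orthonormality of the $w_i$ to evaluate $\langle T,w_i\rangle=\lambda_i$; this is clean, and the only facts used are the convexity argument already packaged in Theorem~\ref{theo:lowbound} and the attainment of the infimum defining $\|T\|_\star$ (which that theorem's proof also assumes). The paper instead derives the bound as a corollary of Theorem~\ref{theo:main}: it writes $T=\sum_j\sigma_j v_j$ with $\|T\|_\star=\sum_j\sigma_j$, notes $\mu_1({\bf w})=0$, and reads off $[{\bf w}^{[k]}]_1\sum_j\sigma_j\geq\sum_{i=1}^k\lambda_i$ from the rearrangement-plus-linear-programming machinery there. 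What the paper's heavier route buys is generality it needs elsewhere: Theorem~\ref{theo:main} tolerates non-orthogonal ${\bf w}$ (via the factor $1-\mu_1({\bf w})$) and produces the partial-sum majorization inequalities underlying the uniqueness results for the DSVD, none of which your shortcut provides. For this particular statement your argument is shorter and loses nothing --- it does not even need the ordering $\lambda_1\geq\cdots\geq\lambda_s$ --- and you correctly identify the paper's route as the alternative specialization of Theorem~\ref{theo:main}.
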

\begin{proof}
We can write $T=\sum_{i=1}^r \sigma_i v_i$ where $v_i$ is a pure tensor of unit length for all $i$,
$\sigma_1\geq \sigma_2\geq \cdots \geq \sigma_r>0$ and $\|T\|_\star=\sum_{i=1}^r\sigma_i$.
We have
$$
\sum_{i=1}^s \lambda_i w_i=\sum_{j=1}^r \sigma_jv_j
$$
and $\mu_1({\bf w})=0$ because ${\bf w}$ is orthogonal. From Theorem~\ref{theo:main} follows that
$$
[{\bf w}^{[k]}]_1(\sigma_1+\cdots+\sigma_r)\geq \lambda_1+\cdots+\lambda_s.
$$
\end{proof}

%If we take $e=0$ and $k=n$ then we get:
%\begin{corollary}\label{cor:2orthogonalinequality}
%Suppose that ${\bf v}=(v_1,\dots,v_r)$  is a $2$-orthogonal list of tensors of unit length,
%${\bf w}=(w_1,\dots,w_s)$ is an $s$-tuple of pure tensors of unit length, and $e$
%is a tensor such that
%$$
%\sum_{i=1}^r \lambda_i v_i=\sum_{j=1}^s \sigma_jw_j
%$$
%with $\lambda_1\geq \cdots\geq \lambda_r>0$, $\sigma_1\geq \cdots\geq \sigma_s>0$.
%Then we have 
%$$
%\sum_{j=1}^s \sigma_j\geq \sum_{i=1}^r\lambda_i
%$$
%\end{corollary}

\begin{proof}[Proof of Theorem~\ref{theo:uniquesingval}]
Suppose that ${\bf v}=(v_1,\dots,v_r)$ and ${\bf w}=(w_1,\dots,w_s)$ are $2$-orthogonal tuples
of pure tensors of unit length, and
$$
\sum_{i=1}^s\lambda_iw_i=\sum_{j=1}^r \sigma_j v_j,
$$
such that $\lambda_1\geq \cdots \geq \lambda_s>0$ and $\sigma_1\geq \cdots \geq\sigma_r>0$.
We apply Theorem~\ref{theo:main} with $[{\bf v}]_1=[{\bf w}^{[k]}]_1=1$, $l=k$ and get
$$
\sum_{j=1}^k\sigma_j\geq\sum_{i=1}^k \lambda_i
$$
for all $k$. If we switch the roles of the $v$'s and $w$'s we also get  inequalities in the other directions as well.
We conclude that $r=s$ and $\lambda_i=\mu_i$ for all $i$.
\end{proof}
\begin{proof}[Proof of Theorem~\ref{theo:properties}]
Suppose that the diagonal singular value decomposition of $T$ is
$$
T=\sum_{i=1}^r \sigma_i v_i.
$$
Then we have $\|T\|_\star\leq \sum_{i=1}^r \sigma_i$.
If we take $k=r$, and $\lambda_i=\sigma_i$  in Theorem~\ref{theo:nuclearlowerbound}
then we get
$$
\|T\|_\star\geq\frac{ \sum_{i=1}^r \sigma_i}{[{\bf v}]_1}=\sum_{i=1}^r\sigma_i,
$$
so we conclude that $\|T\|_\star=\sum_{i=1}^r\sigma_i$.

Since ${\bf v}=(v_1,\dots,v_r)$ is 2-orthogonal, we have
$$
\|T\|^2=\sum_{i=1}^r \sigma_i^2
$$
and
$[{\bf v}]_1=1$.

If $u$ is a pure tensor of unit length, then
$$
|\langle T,u\rangle|=\Big|{\textstyle \sum_{i=1}^r \sigma_i\langle v_i,u\rangle}\Big|\leq \sigma_1\sum_{i=1}^r |\langle v_i,u\rangle|\leq \sigma_1[{\bf v}]_1=\sigma_1.
$$
Clearly $\langle T,v_1\rangle=\sigma_1$. So we conclude that $[T]=\sigma_1$.
\end{proof}
\begin{proof}[Proof of Theorem~\ref{theo:distinctsingvalues}]
Suppose that $T$ has a diagonal singular value decomposition with singular values $\sigma_1>\dots>\sigma_r>0$. 
We can write $T=\sum_{j=1}^r \sigma_jv_j$. Suppose that we have another singular value decomposition
$T=\sum_{i=1}^r\sigma_i w_i$. (Note that the singular values are determined by $T$ because of Theorem~\ref{theo:uniquesingval}).

Let $y_{i,j}=|\langle w_i,v_i\rangle|$.
Then $\sum_{j=1}^r y_{i,j}\leq [{\bf v}]_1=1$
and $\sum_{i=1}^r y_{i,j}leq [{\bf w}]_1=1$.
Fix $k\leq r$ and let $x_j=\sum_{i=1}^k y_{i,j}\leq 1$. From the proof of Theorem~\ref{theo:main} follows that
$$
x_1+\cdots+x_r\leq k.
$$
and
$$
\sum_{i=1}^k \sigma_i\leq \sum_{j=1}^r \sigma_jx_j.
$$
Since $\sigma_1,\dots,\sigma_r$ are distinct, we must have $x_1=x_2=\cdots =x_k=1$ and $x_{k+1}=\cdots=x_r=0$.
This implies that $y_{i,j}=0$ if $i\leq k$ and $j\geq k+1$.
So $y_{i,j}=0$ for $i<j$ and by symmetry, $y_{i,j}=0$ for $i>j$.
This proves that $|\langle v_i,w_i\rangle|=y_{i,i}=1$ for all $i$.
So $w_i$ is equal to $v_i$ up to a unit scalar, say $w_i=\gamma_iv_i$. It follows that
$$
\sum_{i=1}^r \sigma_i v_i=\sum_{i=1}^r \sigma_iw_i=\sum_{i=1}^r\sigma_i\gamma_iv_i
$$
and because $v_1,\dots,v_r$ are linearly independent, it follows that $\gamma_i=1$ and $w_i=v_i$ for all $i$.

\end{proof}
%\begin{definition}
%If $z$ is a $2$-orthogonal tensor, then we can write
%$$
%z=\sum_{i=1}^n \lambda_i v_i
%$$
%where $v=(v_1,\dots,v_n)$ is $2$-orthogonal and $|\lambda_1|\geq |\lambda_2|\geq \cdots\geq |\lambda_n|>0$. 
%We call $|\lambda_1|,\dots,|\lambda_n|$ the singular vaules of $z$. By  Theorem~\ref{theo:2orthogonalequality},
%the singular values are well defined for $2$-orthogonal tensors.
%\end{definition}

\begin{proof}[Proof of Theorem~\ref{theo:torthoSVD}]
Suppose that $T$ is a tensor with 2 diagonal singular value decompositions
$$
T=\sum_{i=1}^r \sigma_iw_i=\sum_{j=1}^r \sigma_jv_j
$$
with $\sigma_1\geq \cdots\geq \sigma_r>0$, and that ${\bf w}=(w_1,\dots,w_r)$ is $t$-orthogonal with $t>2$.
Let $y_{i,j}=|\langle w_i,v_j\rangle|$.

From the proof of Theorem~\ref{theo:main} follows that
$$
\sum_{i=1}^r y_{i,j}=1.
$$
for all $j$.
We also have 
$$
\sum_{i=1}^r y_{i,j}^\alpha \leq 1.
$$
where $\alpha=2/t<1$, because ${\bf w}$ is $t$-orthogonal.
Subtracting gives
$$
\sum_{i=1}^r y_{i,j}^{\alpha}(1-y_{i,j}^{1-\alpha})\leq 0.
$$
It follows that $y_{i,j}\in \{0,1\}$ for all $i,j$.
 The column sums of $Y=(y_{i,j})$ are $1$. So every
column has exactly one $1$. So the matrix has exactly $r$ $1$'s. Since the row sums are also $1$, it follows that
every row has exactly one 1 as well. So $Y$ is a permutation matrix.
There exists a permutation $\phi$ of $\{1,2,\dots,r\}$ such that
$$
v_i=\gamma_i w_{\phi(i)}.
$$
where $\gamma_i$ is a unit for all $i$.
We have
$$
\sum_{i=1}^r \sigma_i  v_i=\sum_{i=1}^r \gamma_i\sigma_i w_{\phi(i)}=\sum_{i=1}^r\sigma_{\phi(i)}w_{\phi(i)}.
$$
Since ${\bf w}$ is linearly independent, it follows that $\gamma_i\sigma_i=\sigma_{\phi(i)}$ for all $i$.
So $\gamma_i=1$ and $\sigma_i=\sigma_{\phi(i)}$ for all $i$.
This shows that
$$
\sigma_1v_1,\dots,\sigma_rv_r
$$
is a permutation of 
$$
\sigma_1w_1,\dots,\sigma_rw_r.
$$
So the diagonal singular value decomposition is unique.
\end{proof}
\section{Tensors without a diagonal singular value decomposition}\label{sec7}

\begin{example}
Consider the permanent $\per_n$. Suppose that it has a DSVD and that its singular values are $\sigma_1,\dots,\sigma_r$.
Then we have 
$$[\per_n]=\sigma_1=\frac{n!}{n^{n/2}},\quad
\|\per_n\|^2=n!, \quad \|\per_n\|_\star=n^{n/2}.
$$
We have
$$
\sigma_1\sum_{i=1}^r \sigma_i=[\per_n]\|\per_n\|_\star=n!=\|\per_n\|^2=\sum_{i=1}^r \sigma_i^2
$$
so it follows that $\sigma_1=\sigma_2=\cdots=\sigma_r$.
So 
$$\frac{n^n}{n!}=\frac{\|\per_n\|_\star}{[\per_n]}=\frac{r\sigma_1}{\sigma_1}=r.
$$
For $n\geq 3$, $n^n/n!$ is not an integer (the denominator is divisible by $n-1$), so $\per_n$ cannot have a diagonal singular value decomposition.
\end{example}
\begin{example}
Consider the determinant $\textstyle\det_n$. Suppose that $\det_n$ has a DSVD. A similar argument as in the previous example shows that $\det_n$ has a singular value $\sigma$ with multiplicity $r$, where
$$
r=\frac{\|\det_n\|_\star}{[\det_n]}=n!.
$$
So there exists a $2$-orthogonal $r$-tuple of pure tensors of unit length. This implies that $r\leq n^{n/2}$ by Proposition~\ref{prop:orthogonalbound}. For $n\geq 3$ we have $n!>n^{n/2}$, so
$\det_n$ cannot have a diagonal singular value decomposition.
\end{example}
\section{Appendix: The tensor rank of the determinant and the permanent}\label{sec8}
For a subset $I=\{i_1,i_2,\dots,i_r\}\subseteq \{1,2,\dots,n\}$ with $i_1<\cdots<i_r$ define
$$
{\textstyle \det_r}(I)=\sum \sgn(\sigma) e_{i_{\sigma(1)}}\otimes \cdots \otimes e_{i_{\sigma(r)}},
$$
$$
\sgn(I)=(-1)^{\textstyle i_1+\cdots+i_r-{r+1\choose 2}}.
$$
and
$$
I^c=\{1,2,\dots,n\}\setminus I.
$$

We have the following generalized Laplace expansion
$$
{\textstyle \det_n}=\sum_{I}\sgn(I)\textstyle \det_r(I)\otimes \det_{n-r}(I^c)
$$
where $I$ runs over all ${n\choose r}$ subsets of $\{1,2,\dots,n\}$ with cardinality $r$.

By flattening, we may view the tensor $\det_n$ as a tensor in $\C^{n^r}\otimes \C^{n^{n-r}}$.
The tensors $\det_r(I)$ where $I$ is a subset of $\{1,2,\dots,n\}$ with $r$ elements are linearly independent.
The tensors $\det_r(I^c)$ are linearly independent as well. This shows that
the flattened tensor has rank at least ${n\choose r}$. So we have
$$
\rank({\textstyle  \det_n})\geq {n\choose r}.
$$
We get the best lower bound if $r=\lfloor n/2\rfloor$:
$$
\rank({\textstyle \det_n})\geq {n\choose \lfloor n/2\rfloor}.
$$
We have a similar Laplace expansion for the permanent, so we also get
$$
\rank({\textstyle \per_n})\geq {n\choose \lfloor n/2\rfloor}.
$$
So the ranks of the determinant and permanent grow at least exponentially. 
We also have an exponential lower bound for the permanent. An exponential upper bound
for the rank of the determinant seems not to be known. However, the
obvious bound $\rank(\det_n)\leq n!$ is not sharp for $n\geq 3$.

For $n=3$, we have
\begin{multline*}
{\textstyle \det_3}={\textstyle \frac{1}{2}}\Big((e_3+e_2)\otimes (e_1-e_2)\otimes (e_1+e_2)+(e_1+e_2)\otimes (e_2-e_3)\otimes (e_2+e_3)+
2e_2\otimes (e_3-e_1)\otimes (e_3+e_1)+\\+
(e_3-e_2)\otimes (e_2+e_1)\otimes (e_2-e_1)+(e_1-e_2)\otimes (e_3+e_2)\otimes (e_3-e_2)\Big).
\end{multline*}
So $\rank(\det_3)\leq 5$. Zach Teitler pointed out that this implies that the Waring rank of a $3\times 3$ matrix is at most 20. He also pointed out that one can show that 
$\rank(\det_3)\geq 4$.
If $n>3$,  then we can again use the generalized Laplace expansion
$$
{\textstyle \det_n}=\sum_{I}\sgn(I)\textstyle \det_3(I)\otimes \det_{n-3}(I^c).
$$
where $I$ runs over all subsets of $\{1,2,\dots,n\}$ with 3 elements.
This proves that
$$
\rank({\textstyle \det_n})\leq {n\choose 3}\rank({\textstyle \det_{n-3}})\rank({\textstyle \det_3})\leq \frac{5\cdot n!}{6\cdot (n-3)!}
$$
We can rewrite this as
$$
\frac{\rank(\det_n)}{n!}\leq \frac{5}{6}\cdot \frac{\rank(\det_{n-3})}{(n-3)!}.
$$
By induction, we get
$$
\rank({\textstyle \det_n})\leq\Big(\frac{5}{6}\Big)^{\textstyle \lfloor \frac{n}{3}\rfloor} \cdot n!.
$$

\begin{remark}
Homogeneous polynomials can be thought of as symmetric tensors. For symmetric
tensors there is also a notion of rank, the so-called {\em symmetric rank}. The symmetric rank is
different from, but closely related to the tensor rank. The determinant and permanent can be thought
of as homogeneous polynomials. Lower bounds for the symmetric tensor rank
of the determinant and permanent can be found in \cite{LT} and \cite{Shafiei}.
\end{remark}
\subsection*{Acknowledgment} The author thanks Zach Teitler for useful comments and a correction.

\ \\[20pt]
\noindent{\sl Harm Derksen\\
Department of Mathematics\\
University of Michigan\\
530 Church Street\\
Ann Arbor, MI 48109-1043, USA\\
{\tt hderksen@umich.edu}}


\begin{thebibliography}{99}
\bibitem{BCLR}D.~Bini, M.~Capovani, G.~Lotti and F.~Romani, {\it $O(n^{2.7799})$ complexity for matrix multiplication}, Inf. Proc. Letters~{\bf 8} (1979), 234--235.
\bibitem{Blaser}M.~Bl\"aser, {\it Beyond the Alder-Strassen bound}, Theor.  Comp. Science~{\bf 331} (2005), 3--21.
\bibitem{BCA}P.~B\"urgisser, M.~Clausen and M.~A.~Shokrollahi, {\it Algebraic Complexity Theory}, Grundlehren der Mathematischen WIssenschaften~{\bf 315}, Springer-Verlag, Berlin, 1997.
\bibitem{CR} E.J.~Cand\`es and B.~Recht, {\it Exact matrix completion via convex optimization}, Foundations of Computational Mathematics~{\bf 9}, 2009, 717--772. 
\bibitem{CT}E.~J.~Cand\`es and T.~Tao, {\it The power of convex relaxation: Near-optimal matrix completion}, Preprint.
\bibitem{CEL}E.~Carlen, E.~H.~Lieb and M.~Loss, {\it An inequality of Hadamard type for permanents}, Methods Appl. Anal.~{\bf 13} 2006), no.~1, 1--17.
\bibitem{CC}J.~D.~Carroll and J.~Chang, {\it Analysis of individual differences in multidimensional scaling via an $n$-way generalization of ``Eckart-Young'' decomposition}, Psychometrika~{\bf 35} (1970), 218--319.

%\bibitem{CG} A.~L.~Chistov and  Yu.~Grigoriev, {\it Complexity of quantifier elimination in the theory of algebraically closed fields}, in:  Proceedings of the 11th Symposium on Mathematical Foundations of Computer Science, Springer Verlag, 1984.
\bibitem{CW}D.~Coppersmith and S.~Winograd, {\it Matrix Multiplication via Arithmetic Progessions}, J.~Symbolic Computation,~{\bf 9} (3) (1990), 251--280.
\bibitem{DSL}V.~De Silva and L.-H.~Lim, {\it Tensor rank and the ill-posedness of the best low-rank approximation problem}, SIAM J. Matrix Anal. Appl.~{\bf 30} (2008), 1254--1279.
\bibitem{GRY}S.~Gandy, B.~Recht and I.~Yamada, {\it Tensor completion and low $n$-rank tensor recovery via convex optimization},  Inverse Problems~{\bf 27} (2011), no.~2.

\bibitem{Glynn} D.~Glynn, {\it The permanent of a square matrix}, European J.~Combin.~{\bf 31} (2010), no.~7, 1887--1891.
\bibitem{Harshman} R.~A.~Harshman, {\it Foundations of the parafac procedure: models and conditions for an ``explanatory'' multimodal factor analysis}, UCLA working papers in Phonetics~{\bf 16} (1970), 1--84.
\bibitem{Hastad}J.~H{\aa}stad, {\it Tensor rank is NP-complete}, J. Algorithms~{\bf 11} (1990), no.~4, 644--654.
\bibitem{Hastad2}J.~H{\aa}stad, {\it Tensor rank is NP-complete}, Automata, languages and programming (Stresa, 1989), Lecture Notes in Comput.~Sci.~{\bf 372}, Springer, Berlin, 1989, 451--460.


\bibitem{Hitchcock}F.~L.~Hitchcock, {\it Multiple invariants and generalized rank of a $p$-way matrix or tensor}, J. Math. and Phys.~{\bf 7} (1927), no.~1, 40--79.


\bibitem{HL}C.~J.~Hillar and L.-H. Lim, {\it Most tensor problems are NP-hard}, Journal of the ACM (2013), to appear.
\bibitem{KMO} R.~H.~Keshavan, A.~Montanari and S.~Oh, {\it Matrix completion from a few entries}, IEEE Trans. on Information Theory~{\bf 56} (2010), 2980--2998.

\bibitem{Kruskal} J.~B.~Kruskal, {\it Three-way arrays: rank and uniqueness for trilinear decompositions, with application to arithmetic complexity and statistics}, Linear Algebra Appl.~{\bf 18} (2) (1977), 95--138.
\bibitem{Landsberg1}J.~M.~Landsberg, {\it Tensors: Geometry and Applications}, Graduate Studies in mathematics~{\bf 128}, American Mathematical Society, RI, 2012. 
\bibitem{Landsberg2}J.~M.~Landsberg, {\it New lower bounds for the rank of matrix multiplication}, Preprint, {\tt arXiv:1206.1530}.
%\bibitem{LO}J.~M.~Landsberg and G.~Ottaviani, {\it New lower bounds for the border rank of matrix multiplication}, Preprint, {\tt arXiv:1112.6007}.
\bibitem{LT}J.~M.~Landsberg and Z.~Teitler, {\it On the ranks and border ranks of symmetric tensors},
Foundations of Computational Mathematics~{\bf 10.3} (2010), 339--366.
\bibitem{dLdMvdW}L.~de Lathauwer, B.~de Moor and J.~Vandewalle, {\it  A multilinear singular value decomposition}, Siam J.~Matrix Anal. Appl.~{\bf 21}, no.~4, 1253--1278.

%\bibitem{Lickteig} T.~Lickteig, {\it A note on border rank}, Inform. Process. Lett.~{\bf 18} (1984), no.~3, 173--178.
\bibitem{LC} L.-H.~Lim and P.~ Comon, {\it Multiarray signal processing: tensor decomposition meets compressed sensing}, Comptes Rendus Mecanique~{\bf 338} (2010), 311-320.
\bibitem{LC2} L.~H.~Lim and P.~Comon, {\it Blind multilinear identification}, IEEE Trans. Inf. Theory (2013), to appear.
\bibitem{MR}A.~Massarenti and E.~Raviolo, {\it The rank of $n\times n$ matrix multiplication is at least $3n^2-2\sqrt{2}n^{3/2}-3n$}, Linear Algebra and its Applications~{\bf 438} (2013), 4500--4509.
%\bibitem{samor}A.~Samorodnitsky, {\it An upper bound for permanents of nonnegative matrices},
%Journal of Comb. Theory, Series A~{\bf 115} (2008), 279--292.
\bibitem{Shafiei}M.~Shafiei, {\it Apolarity for determinants and permanents of generic matrices}, preprint, {\tt arXiv:1212.0515}.
\bibitem{Strassen}V.~Strassen, {\it Gaussian elimination is not optimal}, Numer. Math.~{\bf 13} (1969),  354-356.

\bibitem{SB} N.~D.~Sidiropoulos and R.~Bro, {\it On the uniqueness of multilinear decomposition of $N$-way arrays}, J.~Chemometrics~{\bf 14} (2000), no.~3, 229--239.
\bibitem{RFP} B.~Recht, M.~Fazel and P.~Parillo, {\it Guaranteed minimum rank solutions of matrix equations via nuclear norm minimization}, SIAM Review~{\bf 52} (2010), no.~3, 471--501.

\bibitem{Stothers}A.~J.~Stothers, {\it On the Complexity of Matrix Multiplication},  Ph.D. Thesis, Univ. Edinburgh, 2010.
\bibitem{Williams}V.~V.~Williams, {\it Multiplying matrices faster than Coppersmith-Winograd}, STOC 2012, to appear.


\end{thebibliography}
 \end{document}